\definecolor{grn}{rgb}{0,0.3,0}
\definecolor{mrn}{rgb}{1,0,0}
\definecolor{blue}{rgb}{0,0,1}
\newtheorem{theorem}{Theorem}[section]
\newtheorem{lemma}[theorem]{Lemma}
\newtheorem{proposition}[theorem]{Proposition}
\newtheorem{corollary}[theorem]{Corollary}
\theoremstyle{plain}
\theoremstyle{definition}
\newtheorem{remark}[theorem]{Remark}
\numberwithin{equation}{section}
\renewcommand{\theenumi}{(\roman{enumi})}
\renewcommand{\labelenumi}{\textup{(\theenumi)}}
\title{On universal property of reciprocal Kirchberg algebras
and uniquely ergodic automorphisms
}
\author{Kengo Matsumoto \\
Department of Mathematics \\
Joetsu University of Education \\
Joetsu 943-8512, Japan
\and
Taro Sogabe \\
Faculty of Advanced Science and Technology \\
Kumamoto University \\
Kurokami 2-40-1, Chuo-ku, Kumamoto \\
860-8555, Japan}
\begin{document}

\maketitle

\date{}

\def\det{{{\operatorname{det}}}}

\begin{abstract}
Reciprocality in Kirchberg algebras with finitely generated K-groups
is regarded as a K-theoretic duality through  K-groups and strong extension groups.
We will prove that   the reciprocal Kirchberg algebra  
has a universal property with respect to some generating $C^*$-subalgebra and a family of generating partial isometries.
By using the universal property, 
we will prove that there exists an aperiodic ergodic automorphism on an arbitrary unital Kirchberg algebra 
with finitely generated K-groups, which has a unique invariant state. The state is pure.
\end{abstract}

{\it Mathematics Subject Classification}:
Primary 46L80; Secondary 19K33.

{\it Keywords and phrases}: K-group, Ext-group,   
$C^*$-algebra,  Kirchberg algebra,  ergodic automorphism,
invariant state.

\newcommand{\Ker}{\operatorname{Ker}}
\newcommand{\sgn}{\operatorname{sgn}}
\newcommand{\Ad}{\operatorname{Ad}}
\newcommand{\ad}{\operatorname{ad}}
\newcommand{\orb}{\operatorname{orb}}
\newcommand{\rank}{\operatorname{rank}}

\def\Re{{\operatorname{Re}}}
\def\det{{{\operatorname{det}}}}
\newcommand{\K}{\operatorname{K}}
\newcommand{\bbK}{\mathbb{K}}
\newcommand{\N}{\mathbb{N}}
\newcommand{\bbC}{\mathbb{C}}
\newcommand{\R}{\mathbb{R}}
\newcommand{\Rp}{{\mathbb{R}}^*_+}
\newcommand{\T}{\mathcal{T}}

\newcommand{\sqK}{\operatorname{K}\!\operatorname{K}}

\newcommand{\Z}{\mathbb{Z}}
\newcommand{\Zp}{{\mathbb{Z}}_+}
\def\AF{{{\operatorname{AF}}}}

\def\TorZ{{{\operatorname{Tor}}^\Z_1}}
\def\Ext{{{\operatorname{Ext}}}}
\def\Exts{\operatorname{Ext}_{\operatorname{s}}}
\def\Extw{\operatorname{Ext}_{\operatorname{w}}}
\def\Extt{\operatorname{Ext}_{\operatorname{t}}}
\def\Ext{{{\operatorname{Ext}}}}
\def\Free{{{\operatorname{Free}}}}

\def\Ks{\operatorname{K}^{\operatorname{s}}}
\def\Kw{\operatorname{K}^{\operatorname{w}}}

\def\OA{{{\mathcal{O}}_A}}
\def\ON{{{\mathcal{O}}_N}}
\def\OAT{{{\mathcal{O}}_{A^t}}}
\def\OAI{{{\mathcal{O}}_{A^\infty}}}
\def\OAIn{{{\mathcal{O}}_{A^\infty_n}}}
\def\OAInone{{{\mathcal{O}}_{A^\infty_{n+1}}}}

\def\OAIN{{{\mathcal{O}}_{A^\infty_N}}}
\def\OATI{{{\mathcal{O}}_{A^{t \infty}}}}

\def\OSA{{{\mathcal{O}}_{S_A}}}

\def\TA{{{\mathcal{T}}_A}}
\def\TAn{{{\mathcal{T}}_{A_n}}}
\def\TAnone{{{\mathcal{T}}_{A_{n+1}}}}
\def\TAN{{{\mathcal{T}}_{A_N}}}

\def\TN{{{\mathcal{T}}_N}}

\def\TAT{{{\mathcal{T}}_{A^t}}}

\def\TB{{{\mathcal{T}}_B}}
\def\TBT{{{\mathcal{T}}_{B^t}}}

\def\A{{\mathcal{A}}}
\def\B{{\mathcal{B}}}
\def\C{{\mathcal{C}}}
\def\O{{\mathcal{O}}}
\def\E{{\mathcal{E}}}
\def\F{{\mathcal{F}}}
\def\L{{\mathcal{L}}}

\def\OaA{{{\mathcal{O}}^a_A}}
\def\OB{{{\mathcal{O}}_B}}
\def\OTA{{{\mathcal{O}}_{\tilde{A}}}}
\def\F{{\mathcal{F}}}
\def\G{{\mathcal{G}}}
\def\FA{{{\mathcal{F}}_A}}
\def\PA{{{\mathcal{P}}_A}}
\def\PI{{{\mathcal{P}}_\infty}}
\def\OI{{{\mathcal{O}}_\infty}}
\def\TI{{{\mathcal{T}}_\infty}}

\def\OHF{{{\mathcal{O}}_{H_\F}}}

\def\whatA{{\widehat{\A}}}
\def\whatOA{\widehat{\mathcal{O}}_A}

\def\calI{\mathcal{I}}

\def\bbC{{\mathbb{C}}}

 \def\U{{\mathcal{U}}}
\def\OF{{{\mathcal{O}}_F}}
\def\DF{{{\mathcal{D}}_F}}
\def\FB{{{\mathcal{F}}_B}}
\def\DA{{{\mathcal{D}}_A}}
\def\DB{{{\mathcal{D}}_B}}
\def\DZ{{{\mathcal{D}}_Z}}

\def\End{{{\operatorname{End}}}}

\def\Ext{{{\operatorname{Ext}}}}
\def\Hom{{{\operatorname{Hom}}}}

\def\Tor{{{\operatorname{Tor}}}}

\def\Max{{{\operatorname{Max}}}}
\def\Max{{{\operatorname{Max}}}}
\def\max{{{\operatorname{max}}}}
\def\KMS{{{\operatorname{KMS}}}}
\def\Per{{{\operatorname{Per}}}}
\def\Out{{{\operatorname{Out}}}}
\def\Aut{{{\operatorname{Aut}}}}
\def\Ad{{{\operatorname{Ad}}}}
\def\Inn{{{\operatorname{Inn}}}}
\def\Int{{{\operatorname{Int}}}}
\def\det{{{\operatorname{det}}}}
\def\exp{{{\operatorname{exp}}}}
\def\nep{{{\operatorname{nep}}}}
\def\sgn{{{\operatorname{sign}}}}
\def\cobdy{{{\operatorname{cobdy}}}}
\def\Ker{{{\operatorname{Ker}}}}
\def\Coker{{{\operatorname{Coker}}}}
\def\Im{{\operatorname{Im}}}

\def\fH{{\frak{H}}}
\def\fK{{\frak{K}}}
\def\tfH{{\widetilde{\frak{H}}}}

\def\ind{{{\operatorname{ind}}}}
\def\Ind{{{\operatorname{Ind}}}}
\def\id{{{\operatorname{id}}}}
\def\supp{{{\operatorname{supp}}}}
\def\co{{{\operatorname{co}}}}
\def\scoe{{{\operatorname{scoe}}}}
\def\coe{{{\operatorname{coe}}}}
\def\I{{\mathcal{I}}}
\def\Span{{{\operatorname{Span}}}}
\def\event{{{\operatorname{event}}}}
\def\S{\mathcal{S}}

\def\calK{\mathcal{K}}
\def\calP{\mathcal{P}}

\def\whatPI{\widehat{\mathcal{P}}_\infty}
\section{Introduction}

The present paper has two main results.
The first one shows that the reciprocal dual algebra of a unital Kirchberg algebra 
with finitely generated $\K$-groups has a universal property with respect to 
a generating subalgebra and partial isometries.
The second one shows that any Kirchberg algebra with finitely generated $\K$-groups 
has an ergodic automorphism having a unique invariant state.  
The unique invariant state is pure.
The uniqueness of invariant states 
is proved by using the universal relations of generators of the reciprocal Kirchberg algebra 
proved as the first main result.

 The notion of the reciprocality in Kirchberg algebras with finitely generated $\K$-groups 
 has been introduced 
 by the second named author in \cite{Sogabe2022} 
 related to the study of the homotopy groups of the automorphism groups of 
 Kirchberg algebras and bundles of $C^*$-algebras.
 Two unital Kirchberg algebras $\A$ and $\B$ are said to be {\it reciprocal}\/ if 
 $\A$ is $\sqK$-equivalent to $D(C_\B)$ and $\B$ is $\sqK$-equivalent to $D(C_\A)$, where 
 $C_\A$ for a unital Kirchberg algebra $\A$ is defined by the mapping cone algebra 
 for the untal embedding $u_\A: \mathbb{C}\rightarrow \A$, 
 and $D(C_\A)$ is the Spanier--Whitehead $\K$-dual 
 of $C_\A$ (\cite{Sogabe2022}, cf. \cite{KS}, \cite{KP}).
 In this case, we say that $\A$ (resp. $\B$) is reciprocal to $\B$ (resp. $\A$).  
 The reciprocality is a duality between $\K_i(\, - \, )$ and $\Exts^{i+1}(\, - \, )$
  in  unital Kirchberg algebras with finitely generated $\K$-groups
 such that $\A$ is reciprocal to $\B$ if and only if 
 $\K_i(\A) = \Exts^{i+1}(\B), \K_i(\B) = \Exts^{i+1}(\A), i=0,1$,
 where $\Exts^i$ is the strong extension group (\cite[Proposition 3.7]{MatSogabe2}).
 If $\B$ is reciprocal to $\A$ (and hence $\A$ is reciprocal to $\B$), 
 then $\B$ is written as $\whatA$ 
 and called the reciprocal dual of $\A$ or the reciprocal algebra of $\A$.
 It was proved in \cite{Sogabe2022} that $\whatA$ exists  uniquely up to isomorphism of $C^*$-algebras,
 and satisfies $\widehat{\whatA} \cong \A$.
 In \cite{MatSogabe3}, the reciprocal dual $\whatOA$ of a simple Cuntz--Krieger algebra $\OA$
 has been studied so that it may be realized as a unital simple Exel--Laca algebra.
 Hence $\whatOA$ has a universal and uniqueness property 
 in  generating partial isometries subject to 
 its defining matrix with entries in $\{0,1 \}.$

In the first part of the present paper, 
we will generalize the realization result in \cite{MatSogabe3} of the reciprocal dual
$\whatOA$ as a universal $C^*$-algebra 
to more general unital Kirchberg algebras with finitely generated 
$\K$-groups.  
We will first prove the following theorem.

\begin{theorem} 
\label{thm:mainuniversalA3}
Let $\A$ be a unital Kirchberg algebra with finitely generated $\K$-groups.
\begin{enumerate}
\renewcommand{\theenumi}{(\roman{enumi})}
\renewcommand{\labelenumi}{\textup{\theenumi}}
\item
The reciprocal dual $\widehat{\A}$ of $\A$ is the universal unique unital Kirchberg algebra 
written $C^*_{\text{univ}}(\T, \{t_j\}_{j \in \N})$
generated by a separable unital nuclear UCT $C^*$-algebra $\T$ and an infinite family $t_i, \, i \in \N$
of partial isometries satisfying the following properties:
{\begin{enumerate}
\renewcommand{\theenumi}{(\arabic{enumi})}
\renewcommand{\labelenumi}{\textup{\theenumi}}
\item $\T$ contains a $C^*$-subalgebra isomorphic to the $C^*$-algebra $\calK$ 
of compact operators on a separable infinite dimensional 
Hilbert space as an essential ideal
with a minimal projection $e \in \calK \subset \T$ satisfying
\begin{equation} \label{eq:1.1}
(\K_0(\T), [e]_0, \K_1(\T)) \cong
(\Exts^1(\A), [\iota_\A(1)]_s, \Exts^0(\A)),
\end{equation} 
where $[\iota_\A(1)]_s$ is the class of an extension $\iota_\A(1)$ of $\A$
satisfying $\Exts^1(\A)/\Z [\iota_\A(1)]_s =\Extw^1(\A)$ (see Section \ref{sect:Reviewreciprocal} for the definition of $\iota_\A$).
\item  
the partial isometries 
$t_j, j \in \N$ 
satisfy the relations:
\begin{equation}\label{eq:main}
  \begin{cases} & \, \,  t_1^* t_1 = 1_{\whatA}, \qquad 
 t_j^* t_j = 1_\T \quad  \text{ for } j \ge 2,\\
 & \, \,  t_1 t_1^* =e, \qquad 
 1_\T  + \sum_{j=2}^m t_j t_j^* < 1_{\widehat{\A}} \quad \text{ for } m \ge 2, 
\end{cases}
\end{equation} 
where $1_{\widehat{\A}}, 1_\T$ denote the units of $\widehat{\A}, \T$, respectively.
\end{enumerate} 
}
\item
$(\K_0(\T), [e]_0, \K_1(\T))
\cong
(\K_0(\whatA), [1_{\whatA}]_0, \K_1(\whatA)) 
$
and hence the $C^*$-algebra $\whatA$ 
does not depend on the choice of the separable unital nuclear UCT $C^*$-algebra $\T$ 
as long as it satisfies \eqref{eq:1.1}.
\end{enumerate} 
\end{theorem}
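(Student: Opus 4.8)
The plan is to give the universal object $\B := C^*_{\text{univ}}(\T,\{t_j\}_{j\in\N})$ a concrete meaning as a Cuntz--Pimsner-type $C^*$-algebra with coefficient algebra $\T$, to verify directly that it is a separable nuclear unital Kirchberg algebra in the UCT class, and then to pin it down as $\whatA$ by the Kirchberg--Phillips classification once its pointed $\K$-groups are computed and matched. First I would make the universal object precise: let $\B$ be the universal $C^*$-algebra generated by a copy of $\T$ and partial isometries $t_j$ subject to \eqref{eq:main}, reading the strict inequalities as the assertion that $1_\T, t_2 t_2^*, t_3 t_3^*, \dots$ are mutually orthogonal with sum a proper subprojection of $1_{\whatA}$. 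Separability of $\B$ is immediate; to see that $\B$ is nonzero I would exhibit a concrete representation of the relations, and nuclearity together with the UCT then pass to $\B$ from $\T$ through the construction (equivalently, $\B$ lies in the bootstrap class generated by $\T$). A basic observation I would exploit throughout is that $t_1^* t_1 = 1_{\whatA}$, $t_1 t_1^* = e$ with $e$ a minimal projection of the essential ideal $\calK \subset \T$ encodes a self-similarity $\Ad t_1 \colon \B \xrightarrow{\ \cong\ } e \B e$; in particular $[1_{\B}]_0 = [e]_0$ in $\K_0(\B)$.

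The decisive structural input is that $\B$ is simple and purely infinite. I would introduce the gauge circle action $\gamma$ fixing $\T$ pointwise and scaling each partial isometry $t_j$, and establish a gauge-invariant uniqueness theorem so that simplicity reduces to the absence of nontrivial gauge-invariant ideals. Here the hypotheses pay off: a nonzero gauge-invariant ideal meets the fixed-point algebra, hence meets $\T$, and therefore meets the essential ideal $\calK$; since $\calK$ is simple the ideal contains $e = t_1 t_1^*$, and then $t_1^* e t_1 = 1_{\B}$ forces it to be all of $\B$. Pure infiniteness is supplied by the infinite family of isometries $t_j\ (j\ge 2)$ with mutually orthogonal proper range projections and $1_\T + \sum_{j=2}^m t_j t_j^* < 1_{\whatA}$, which produce properly infinite projections dominating every nonzero projection. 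This identifies $\B$ as a unital Kirchberg algebra. For the $\K$-groups I would run Pimsner's six-term exact sequence for the construction. Because the family $\{t_j\}$ is infinite (the range projections never exhaust $1_{\whatA}$), the boundary maps $\id - [X]$ attached to the class $[X] \in \sqK(\T,\T)$ of the defining correspondence are expected to vanish, so that the inclusion $\T \hookrightarrow \B$ induces an isomorphism $\K_*(\T) \cong \K_*(\B)$; combined with $[1_{\B}]_0 = [e]_0$ coming from $t_1$, this gives the pointed isomorphism $(\K_0(\B),[1_{\B}]_0,\K_1(\B)) \cong (\K_0(\T),[e]_0,\K_1(\T))$. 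Carrying out this computation while correctly accounting for the anomalous $t_1$-summand, whose range is a minimal projection of $\calK$, is the step I expect to be the main technical obstacle for part (i).

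It remains to identify this data with that of $\whatA$ and to conclude. Hypothesis \eqref{eq:1.1} gives $(\K_0(\T),[e]_0,\K_1(\T)) \cong (\Exts^1(\A),[\iota_\A(1)]_s,\Exts^0(\A))$, while the reciprocality duality recalled in the introduction gives $\K_0(\whatA) \cong \Exts^1(\A)$ and $\K_1(\whatA) \cong \Exts^0(\A)$. The one substantive point is that the latter isomorphism carries $[1_{\whatA}]_0$ to $[\iota_\A(1)]_s$, rather than merely matching abstract groups; establishing this requires tracking the position of the unit through the Spanier--Whitehead $\K$-duality together with the normalization $\Exts^1(\A)/\Z[\iota_\A(1)]_s = \Extw^1(\A)$, and this is the subtlety underlying part (ii). Granting it, part (ii) holds and, combined with the computation of $\K_*(\B)$ above, yields a pointed isomorphism $(\K_0(\B),[1_{\B}]_0,\K_1(\B)) \cong (\K_0(\whatA),[1_{\whatA}]_0,\K_1(\whatA))$. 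Since $\B$ and $\whatA$ are unital Kirchberg algebras satisfying the UCT, the Kirchberg--Phillips theorem provides a unit-preserving isomorphism $\B \cong \whatA$, so $\whatA$ is the universal --- and hence unique --- algebra with the stated presentation, while independence of the choice of $\T$ is immediate from part (ii).
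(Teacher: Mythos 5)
Your overall strategy (realize the universal algebra $\B$ intrinsically, prove it is a Kirchberg algebra, compute its pointed $\K$-theory, invoke Kirchberg--Phillips) is coherent in outline, but it rests on a premise that fails, and the failure occurs exactly where the paper's proof does its real work. The relations \eqref{eq:main} do \emph{not} present your $\B$ as a Cuntz--Pimsner algebra over $\T$: the generator $t_1$ satisfies $t_1^* t_1 = 1_{\whatA}$, and $1_{\whatA} \notin \T$ (indeed $1_\T < 1_{\whatA}$ strictly), so the generators do not arise from a $C^*$-correspondence over $\T$, where one would need $t_\xi^* t_\eta \in \T$. Consequently the three tools you invoke off the shelf --- a gauge-invariant uniqueness theorem, nuclearity/UCT inheritance for Pimsner algebras, and Pimsner's six-term sequence with boundary maps $\id - [X]$ --- have no ambient construction to apply to; each would have to be built from scratch for this hybrid presentation, which is precisely the ``anomalous $t_1$-summand'' obstacle you flag but do not resolve. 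Your simplicity sketch also contains a non sequitur: a nonzero gauge-invariant ideal meets the fixed-point algebra of the gauge action, but that fixed-point algebra is the closed span of all balanced words in the $t_j$'s and is far larger than $\T$, so ``hence meets $\T$'' requires a structure theory of this core that is again blocked by $t_1$. Finally, the vanishing of the boundary maps is asserted only as an expectation; the actual mechanism in this setting is not the infinitude of the family but the fact that the left action of the coefficient algebra misses the compacts, $\varphi_\F(\F) \cap \calK(H_\F) = \{0\}$, which forces the Toeplitz and Cuntz--Pimsner algebras to coincide and makes the inclusion of the coefficient algebra a $\sqK$-equivalence --- and this mechanism is available over $\F$, not over $\T$.

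The paper supplies exactly the missing device: a dilation. It first proves that $\OHF$ is the universal algebra $C^*_{\text{univ}}(\F, \{S_j\}_{j \in \N})$ over an auxiliary algebra $\F \cong \T$ with honest isometries satisfying $S_i^* S_i = 1$ and $\sum_{j=1}^m S_j S_j^* < f$ (Theorem \ref{thm:universalityforOHF}, where Kumjian's simplicity/nuclearity and Pimsner's $\sqK$-equivalence genuinely apply), sets $\whatA = f \OHF f$, $\T = S_1 \F S_1^*$, $t_1 = S_1 f$, $t_j = S_j S_1^*$; then, given an arbitrary representation of $(\T, \{t_j\})$ satisfying \eqref{eq:main}, it enlarges the Hilbert space and constructs an isometry $v$ with $v v^* = \pi(1_\T)$, $v^* v = 1$ (Lemma \ref{UniversalhatA5}), which converts the $t_j$'s into isometries $\tilde{s}_j$ satisfying \eqref{eq:sjB}; universality of $\OHF$ plus its simplicity then give Corollary \ref{cor:maincoro}, i.e.\ existence and uniqueness simultaneously. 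To salvage your intrinsic route you would have to either reprove gauge-uniqueness and the $\K$-theory sequence for this nonstandard presentation or reproduce the dilation --- at which point you have the paper's proof. Two smaller points: the pointed identification in part (ii) that worries you, $[1_{\whatA}]_0 \leftrightarrow [\iota_\A(1)]_s$, is not re-derived through Spanier--Whitehead duality here but quoted from \cite[Proposition 3.7]{MatSogabe2}; and the existence of a $\T$ satisfying \eqref{eq:1.1}, without which the statement is vacuous, comes from \cite[Theorem 1.2]{PennigSogabe}, which your proposal does not address. Your observation that $\Ad t_1$ forces $[1_{\B}]_0 = [e]_0$ is correct and matches the paper's use of $T_1^* T_1 = f$.
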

Let $\PI$ be the unital Kirchberg algebra such that $\K_0(\PI) = 0,
\K_1(\PI) = \Z$.
Its reciprocal dual $\widehat{\PI}$ 
is also a Kirchberg algebra such that 
$\K_0(\widehat{\PI}) = \Z \oplus \Z$
and $\K_1(\widehat{\PI}) = 0$.
By using the above theorem we may concretely realize the algebra $\widehat{\PI}$ 
as a unital simple Exel--Laca algebra of some infinite matrix $\widehat{P}_\infty$,
which will be studied in Section \ref{sec:Examples}.

The second main result of the present paper shows the existence of
a uniquely ergodic automorphism of a unital Kirchberg algebra with finitely generated 
$\K$-groups.
We will prove the unique ergodicity of an automorphism of unital Kirchberg algebras 
with finitely generated $\K$-groups 
by using the above description of generators as universal $C^*$-algebras. 
There are many important and fundamental results on studies of automorphisms on Kirchberg algebras
such as (cf. Nakamura \cite{Nakamura}, Izumi \cite{Izumi}, Gabe--Szabo \cite{GS}, etc. ). 
By using an idea appeared in the structural analysis of the reciprocal Cuntz--Krieger algebras 
in \cite{MatSogabe3}
the authors recently showed in \cite{MatSogabe4} that 
any unital Kirchberg algebra has an aperiodic ergodic automorphism which has an invariant state, 
where an automorphism $\alpha$ on $\A$ is said to be aperiodic if 
$\alpha^n$ is outer for every $n \in \Z$ with $n\ne 0$.
It is said to be ergodic if the fixed point algebra $\A^\alpha$ of $\A$ under $\alpha$ is 
the scalar multiples of the unit $1_\A$ of $\A$.  
Since any unital Kirchberg algebra $\A$ with finitely generated $\K$-groups
is isomorphic to the reciprocal dual of $\whatA$ (i.e., $\A\cong \widehat{\whatA}$),
$\A$ itself satisfies the universal property of Theorem \ref{thm:mainuniversalA3}.
In the present paper, 
we will give an description of the aperiodic ergodic automorphism given in \cite{MatSogabe4}
in terms of the generating elements appeared  in Theorem \ref{thm:mainuniversalA3}. 
By using the description of the ergodic automorphism, we may prove that 
the invariant stste is unique and pure, 
so that we have the following theorem. 
\begin{theorem}\label{thm:maininvariantstate}
Let $\A$ be an arbitrary unital Kirchberg algebra with finitely generated $\K$-groups.
There exists an aperiodic ergodic automorphism on $\A$ having a unique invariant state.
The invariant state is pure.
\end{theorem}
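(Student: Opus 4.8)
The plan is to transport the automorphism constructed in \cite{MatSogabe4} into the generators-and-relations picture furnished by Theorem \ref{thm:mainuniversalA3}. Since $\A \cong \widehat{\whatA}$, Theorem \ref{thm:mainuniversalA3} presents $\A$ as the universal algebra generated by a unital nuclear UCT $C^*$-algebra $\T \supseteq \calK$ and partial isometries $t_j$, $j \in \N$, subject to \eqref{eq:main}. In particular $t_1$ is an isometry with $t_1^* t_1 = 1_\A$ and $t_1 t_1^* = e$ a minimal projection of $\calK$, while the range projections $t_j t_j^*$, $j \ge 2$, form an infinite family of mutually orthogonal projections, each orthogonal to $1_\T$, with $1_\T + \sum_{j \ge 2} t_j t_j^* \le 1_\A$. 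The first step is to record how the automorphism $\alpha$ of \cite{MatSogabe4} acts on these generators; the structural feature I intend to exploit is that $\alpha$ permutes the family $\{t_j\}_{j \ge 2}$ by a shift, so that the projections $t_j t_j^*$, $j \ge 2$, are carried to one another, while acting on the remaining data $\T$ and $t_1$ in a prescribed way. The existence of at least one invariant state, together with aperiodicity and ergodicity, is quoted directly from \cite{MatSogabe4}, so only uniqueness and purity remain.

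For uniqueness, let $\varphi$ be any $\alpha$-invariant state. Because $\alpha$ shifts the orthogonal family $\{t_j t_j^*\}_{j \ge 2}$, invariance makes $\varphi(t_j t_j^*)$ independent of $j \ge 2$; since these projections are mutually orthogonal and $\sum_{j \ge 2} \varphi(t_j t_j^*) \le \varphi(1_\A) = 1$, their common value must be $0$. The Cauchy--Schwarz inequality then gives $\varphi(x t_j) = \varphi(t_j^* x) = 0$ for every $x \in \A$ and every $j \ge 2$, so $\varphi$ annihilates each spanning element $t_\mu a t_\nu^*$ (with $a \in \T$) in which an index $\ge 2$ appears. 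Consequently $\varphi$ is completely determined by its restriction to the subalgebra $C^*(\T, t_1)$, and the problem reduces to showing that invariance, together with the relations \eqref{eq:main}, forces a unique value on words in $t_1, t_1^*$ and $\T$. At this stage the minimality of $e$ is the essential tool: realizing $\T \subseteq M(\calK) = B(H)$ with $e$ the rank-one projection onto $\bbC \xi$, one has $e a e = \langle \xi, a\xi\rangle\, e$, and hence $t_1^*\, a\, t_1 = \langle \xi, a\xi\rangle\, 1_\A$ for every $a \in \T$, using $e t_1 = t_1$ and $t_1^* e = t_1^*$. Thus conjugation by $t_1$ collapses the entire $\T$-part of a word to a scalar; feeding this into $t_1^* t_1 = 1_\A$, $t_1 t_1^* = e$ and $\alpha$-invariance, I would telescope $\varphi$ on an arbitrary word down to a single scalar, thereby identifying $\varphi$ uniquely.

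For purity I would exhibit this unique invariant state $\varphi$ as a vector state in an explicit irreducible representation. From the generators one constructs a Fock-type representation $\pi$ on a Hilbert space carrying a distinguished unit vector $\Omega$ with $\pi(e)\Omega = \Omega$ and $\pi(t_j)^*\Omega = 0$ for $j \ge 2$, and puts $\varphi(x) = \langle \Omega, \pi(x)\Omega\rangle$. Irreducibility then follows from the standard argument for Fock representations of Cuntz-type relations, sharpened by the minimality of $e$: any operator in the commutant of $\pi(\A)$ must commute with $\pi(\calK)$, hence act as a scalar on the one-dimensional piece that $\pi(e)$ cuts out, and the partial-isometry relations then propagate this scalar over all of the representation space. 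A vector state in an irreducible representation is pure, and by the uniqueness already established it is the unique $\alpha$-invariant state, completing the proof.

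The step I expect to be the main obstacle is closing the reduction of the second paragraph while simultaneously guaranteeing the irreducibility of the third. The delicate point is that $e$ is minimal in $\calK$ but \emph{not} in the purely infinite corner $e\A e$, so its minimality can only be used through the multiplier embedding $\T \subseteq B(H)$ and never through any finiteness of the corner; in particular the telescoping must be controlled on the off-diagonal words $t_1^{*\,m}\, a\, t_1^{\,n}$ with $m \ne n$, not merely on the diagonal, and the bookkeeping of the shift on $\{t_j\}_{j \ge 2}$ has to be kept compatible with the non-shift behaviour on the $e$-corner. Verifying these two facts in tandem — that invariance really does determine $\varphi$ on all of $C^*(\T, t_1)$, and that the resulting state is the vacuum state of a genuinely irreducible representation — is where I expect the argument to require the most work.
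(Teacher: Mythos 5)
Your overall strategy coincides with the paper's: realize $\A\cong\widehat{\whatA}$ via the universal presentation of Theorem \ref{thm:mainuniversalA3}, show that invariance plus orthogonality of the range projections forces $\varphi(t_jt_j^*)=0$ for the shifted family, collapse the $\T$-part of words using minimality of $e$ ($e\T e=\bbC e$), reduce to powers of the distinguished isometry, and prove purity by exhibiting the state as the vacuum vector state of an irreducible Fock-type representation. All of this matches Sections 5--6 of the paper.

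However, the uniqueness argument as you have sketched it has a genuine gap, and it is exactly the one you flag at the end. The shift argument only gives $\varphi(t_jt_j^*)=0$ for $j\ge 2$; it gives you nothing about $\varphi(1_\T)$ or $\varphi(e)=\varphi(t_1t_1^*)$, and without those two vanishings you cannot kill the words supported on $C^*(\T,t_1)$ --- a priori an invariant state could be concentrated on $1_\T$, and the off-diagonal telescoping of $t_1^{*m}a\,t_1^{n}$ cannot be closed because Cauchy--Schwarz needs $\varphi(t_1t_1^*)=0$ to conclude $\varphi(t_1^{n})=0$ for $n\ge 1$. The paper obtains precisely these two facts from the \emph{explicit} action of the automorphism on the remaining generators, which is not a fixed-point action and not a bare permutation: reindexing the partial isometries by $\Z$ (with $t_0$ the distinguished one satisfying $t_0t_0^*=e$), one has $\hat{\theta}(b)=t_1bt_1^*$ for $b\in\T$, $\hat{\theta}(t_0)=t_1t_0$, $\hat{\theta}(t_{-1})=t_1^*$, and $\hat{\theta}(t_n)=t_{n+1}t_1^*$ otherwise (Proposition \ref{prop:ergodicauto}). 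From these, $\varphi(1_\T)=\varphi(\hat{\theta}(1_\T))=\varphi(t_1t_1^*)=0$ and $\varphi(e)=\varphi(\hat{\theta}(t_0t_0^*))=\varphi(t_1et_1^*)\le\varphi(t_1t_1^*)=0$ (Lemma \ref{lem:invariantstate2}); these are the inputs that make the word-by-word reduction to $c\,t_0^{*p}t_0^{q}$ and the final conclusion $p=q$, $a\in\bbC 1_{\whatA}$ work in Proposition \ref{prop:nvariantstate2}. Your description of the automorphism as ``permuting $\{t_j\}_{j\ge 2}$ by a shift while acting on $\T$ and $t_1$ in a prescribed way'' is therefore not enough: the conjugation $b\mapsto t_1bt_1^*$ on $\T$ and the twisted shift $t_n\mapsto t_{n+1}t_1^*$ are the load-bearing parts of the prescription, and the proof does not go through until they are written down and used. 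The purity argument in your third paragraph is essentially the paper's and is fine modulo the identification of the unique invariant state with the vacuum state.
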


An automorphism $\alpha$ on a $C^*$-algebra $\A$ is said to be {\it uniquely ergodic}\/ if
$\alpha$-invariant state uniquely exists (cf. \cite{AbadieDykema}).
The above theorem shows that any unital Kirchberg algebra
with finitely generated $\K$-groups has a uniquely ergodic automorphism.
To the best knowledge of authors, 
there seems to be no other previous examples of uniquely ergodic automorphisms
  even on Cuntz--Krieger algebras, except the shifts on the canonical generators on $\OI$.
\begin{corollary}\label{coro:uniquelyergodicCK}
There exists an aperiodic ergodic automorphism on a simple Cuntz--Krieger algebra $\OA$
having a unique invariant state. 
\end{corollary}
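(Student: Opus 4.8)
The plan is to deduce Corollary~\ref{coro:uniquelyergodicCK} directly from Theorem~\ref{thm:maininvariantstate} by verifying that a simple Cuntz--Krieger algebra $\OA$ falls within the hypotheses of that theorem. First I would recall that a Cuntz--Krieger algebra $\OA$ associated to an irreducible (non-permutation) $0$--$1$ matrix $A$ is a unital Kirchberg algebra: it is unital by construction, simple because $A$ satisfies condition (I) and is irreducible, purely infinite and nuclear by the standard structure theory of Cuntz--Krieger algebras, and it satisfies the UCT. The only remaining point is that its $\K$-groups are finitely generated, which is immediate from the explicit computation $\K_0(\OA) \cong \Z^N / (I - A^t)\Z^N$ and $\K_1(\OA) \cong \Ker(I - A^t) \subset \Z^N$, where $N$ is the size of $A$; both are finitely generated abelian groups since they are subquotients of the finitely generated free abelian group $\Z^N$.

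With these observations in place, $\OA$ is an arbitrary instance of a unital Kirchberg algebra with finitely generated $\K$-groups, so Theorem~\ref{thm:maininvariantstate} applies verbatim. The conclusion then transfers: there exists an aperiodic ergodic automorphism $\alpha$ of $\OA$ possessing a unique invariant state, and that state is pure. Thus I would simply invoke the theorem after checking the hypotheses, producing a short proof of the form ``Since $\OA$ is a unital Kirchberg algebra with finitely generated $\K$-groups, the statement follows from Theorem~\ref{thm:maininvariantstate}.''

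Since this is a corollary of a stated theorem, there is essentially no mathematical obstacle; the only care needed is in confirming that the defining hypotheses on the matrix $A$ (irreducibility together with condition (I), equivalently that $A$ is not a permutation matrix) guarantee that $\OA$ is indeed a \emph{Kirchberg} algebra rather than merely a simple $C^*$-algebra. I would state these standing assumptions on $A$ explicitly, as they are what force pure infiniteness and simplicity simultaneously; everything else---nuclearity, the UCT, and finite generation of $\K$-theory---is automatic for Cuntz--Krieger algebras. The real content lives entirely in Theorem~\ref{thm:maininvariantstate}, and the corollary is its specialization to this well-understood and historically central class of examples.
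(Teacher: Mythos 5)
Your proof is correct and matches the paper's (implicit) argument exactly: the corollary is obtained by noting that a simple Cuntz--Krieger algebra $\OA$ is a unital Kirchberg algebra with finitely generated $\K$-groups (via the standard computations $\K_0(\OA)\cong\Z^N/(I-A^t)\Z^N$ and $\K_1(\OA)\cong\Ker(I-A^t)$) and then invoking Theorem \ref{thm:maininvariantstate}. Nothing further is needed.
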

\begin{remark}
In particular,
there exists an aperiodic ergodic automorphism on the Cuntz algebra $\O_2$
having a unique invariant state. 
 In \cite{CE}, A. L.  Carey and D. E. Evans constructed  ergodic automorphisms on $\O_2$
 having two distinct invariant states.
 Our  aperiodic ergodic automorphism on the Cuntz algebra $\O_2$
 has only one invariant state so that 
 our automorphism is not conjugate to the ergodic automorphisms constructed by 
 A. L. Carey and D. E. Evans (although they are cocycle conjugate by H. Nakamura's theorem \cite{Nakamura}).
Another easy example of an ergodic automorphism is an automorphism shifting tensor component of $\mathcal{O}_2\cong\bigotimes_{\mathbb{Z}}\mathcal{O}_2$.
This automorphism has infinitely many invariant states given by the product states (i.e., $\varphi^{\otimes\infty} : \mathcal{O}_2^{\otimes\infty}\to\mathbb{C}$), and is not conjugate to our automorphism.
\end{remark}

Throughout the paper, Kirchberg algebras mean 
separable unital simple purely infinite nuclear $C^*$-algebras 
satisfying the UCT.
The UCT $C^*$-algebras mean $C^*$-algebras belonging to the bootstrap class to which the  UCT applies.
We denote by $\Zp$ and $\N$ 
the set of nonnegative integers
and the set of positive integers,
respectively.  

\section{Review of the construction of the reciprocal dual $\widehat{\A}$}\label{sect:Reviewreciprocal}
In this section, 
we will review the construction of  the reciprocal dual $\widehat{\A}$
of a unital Kirchberg algebra $\A$
following \cite{MatSogabe3}.
Throughout the section, 
we denote by $\calK(H)$ the $C^*$-algebra of compact operators
on a separable infinite dimensional Hilbert space $H$, shortly written as $\calK$. 
We refer to the Blackadar's text book \cite{Blackadar} for the basic facts and notations 
related to $\sqK$-groups.

We write $\S = C_0(0,1)$ and $\S^k = \S^{\otimes k}$.
For a separable unital nuclear $C^*$-algebra $\A$,
the mapping cone algebra $C_\A$ is defined by
$C_\A =\{a(t) \in C_0(0,1] \otimes \A\mid a(1) \in {\mathbb{C}}1_{\A}\}$.
Following G. Skandalis \cite{Skandalis},
  we write
\begin{equation*}
\Exts^i(\A) = \sqK(C_\A, \S^{i+1}), \qquad i=0,1.
\end{equation*}
As in \cite{Blackadar}, the group $\Extw^i(\A)$
is given by $\sqK(\A, \S^i)$  for $i=0,1$.
It is well-known that the group $\Exts^1(\A)$ (resp. $\Extw^1(\A)$) 
is realized as the strong (resp. weak) extension group $\Exts(\A)$ (resp. $\Extw(\A)$)
which is the abelian group of strong (resp. weak)  unitary equivalence classes of 
unital Busby invariants (cf. \cite{Blackadar}, \cite{Skandalis}). 
We write $\S\otimes \A$ as $\S \A$.
Applying the $\K$-homology functor $\sqK(\, - \, , \mathbb{C})$ 
to the natural short exact sequence
$0\rightarrow \S\A \rightarrow C_\A \rightarrow \mathbb{C} \rightarrow 0$,
we have a cyclic six term exact sequence of $\Ext_*$-groups:
\begin{equation}\label{eq:cyclic6temExtsw}
\begin{CD}
\Exts^0(\A) @>>> \Extw^0(\A) @>>> \Z \\
@AAA @. @VV{\iota_\A}V \\
0 @<<< \Extw^1(\A) @<<<\Exts^1(\A).
\end{CD}
\end{equation}
The class of $\iota_\A(1)$ in $\Exts^1(\A)$ is denoted by $[\iota_\A(1)]_s$
which corresponds to the class of strong unitary equivalence class 
of a Busby invariant 
${\rm Ad}u\circ \rho$ with a unital trivial extension 
$\rho$ and a unitary $u$ in the Calkin algebra $\B(H)/\calK(H)$
with the Fredholm index $1$.

Throughout the rest of this section,
we fix a Kirchberg algebra $\A$. 
Let $\F$ be a separable unital nuclear UCT $C^*$-algebra containing  
$\calK(H)$ as an essential ideal of $\F$ such that 
\begin{equation}\label{eq:KFExtsA}
(\K_0(\F), [f]_0, \K_1(\F))
\cong
(\Exts^1(\A), [\iota_\A(1)]_s, \Exts^0(\A))
\end{equation}
where $[f]_0$ is the class in $\K_0(\F)$
of a fixed minimal projection $f\in \calK(H)$ so that 
$f \in \calK(H) \subset \F$.
Such a separable unital $C^*$-algebra $\F$ for a given unital Kirchberg algebra $\A$
always exists by \cite[Theorem 1.2]{PennigSogabe}.  
Since $\calK(H)$ is an essential ideal of $\F$,
there exists an injective $*$-homomorphism 
$\pi_\F:\F\rightarrow \B(H)$. 
Let us denote by $e_k, k \in \N$
the canonical basis of $\ell^2(\N)$ defined by
$e_k(m) = \delta_{k,m}, \, k,m \in \N.$ 
Define the Hilbert $C^*$-bimodule $H_\F$ over $\F$
by setting
\begin{equation*}
H_\F: =H\otimes_{\mathbb{C}} \ell^2(\N)\otimes_{\mathbb{C}} \F
\end{equation*}
where $\F$-valued inner product $< \,\,\,\, | \,\,\,\,>_\F$
on $H_\F$ is defined by 
$$
< \xi\otimes e_k \otimes  x \, \, | \, \,  \eta\otimes e_l\otimes y>_\F 
=< \xi \, | \, \eta>_H \delta_{k,l}  x^* y \in \F 
$$
for $\xi, \eta \in H, \, \, e_k, e_l \in \ell^2(\N), \, \, 
 x, y \in \F.$
 The left action of $\F$ on $H_\F$ to the algebra $\L(H_\F)$ of adjointable bounded module maps 
on $H_\F$ is given by 
$$
\varphi_\F:= \pi_\F\otimes 1_{\ell^2(\N)} \otimes 1_\F: 
\F\otimes \mathbb{C} 1_{\ell^2(\N)}\otimes \mathbb{C}1_\F \rightarrow \L(H_\F). 
$$
Take  Pimsner's Toeplitz algebra $\T_{H_\F}$ (see \cite{Pimsner}) 
 which is 
$\sqK$-equivalent to $\F$ via the unital embedding 
$ \F \hookrightarrow \T_{H_\F}.$ 
Since $\F\otimes \mathbb{C} 1_{\ell^2(\N)}\otimes \mathbb{C}1_\F
 \cap \calK(H_\F) = \{ 0 \},$
Kumjian's results \cite[Proposition 2.1]{Kumjian}
and \cite[Theorem 3.1]{Kumjian} together with
\cite[Corollary 4.5]{Pimsner} tell us that 
the Cuntz--Pimsner algebra $\O_{H_\F}$ is a separable unital simple purely infinite
nuclear UCT $C^*$-algebra and hence a Kirchberg algebra such that   
$\O_{H_\F}$ and $\T_{H_\F}$ are canonically isomorphic, 
and the natural unital embedding $\F\hookrightarrow \T_{H_\F}$
yields a $\sqK$-equivalence.
We identify the minimal projection $f$ in $\calK(H)$ 
with the projection in $\OHF$ through the embedding and 
the identification:
\begin{equation}\label{eq:eeA}
f\in \calK(H) \subset \F \hookrightarrow \T_{H_\F} \cong \O_{H_\F} \ni f.
\end{equation} 
We then have
\begin{equation*}
(\K_0(\O_{H_\F}), [f]_0, \K_1(\O_{H_\F}))
\cong  
(\K_0(\T_{H_\F}), [f]_0, \K_1(\T_{H_\F})) 
\cong  
(\K_0(\F), [f]_0, \K_1(\F)) 
\end{equation*}
The reciprocal duality (cf. \cite[Proposition 3.7]{MatSogabe2}) tells us
\begin{equation*}
(\Exts^1(\A), [\iota_\A(1)]_s, \Exts^0(\A)) 
\cong
(\K_0(\widehat{\A}), [1_{\widehat{\A}}]_0, \K_1(\widehat{\A})),
\end{equation*}
so that by the hypothesis \eqref{eq:KFExtsA}, we obtain that 
\begin{equation*}
(\K_0(\O_{H_\F}), [f]_0, \K_1(\O_{H_\F}))
\cong
(\K_0(\widehat{\A}), [1_{\widehat{\A}}]_0, \K_1(\widehat{\A})),
\end{equation*}
showing that 
\begin{equation}\label{eq:constructionAhat}
f \O_{H_\F} f \cong \widehat{\A},
\end{equation}
by the Kirchberg--Philips's classification theorem of Kirchberg algebras 
(\cite{Kirchberg}, \cite{Phillips}).

\section{Universality of the Cuntz--Pimsner algebra $\OHF$}
Let $\F$ be a separable  $C^*$-algebra with unit $1_\F$ 
containing $\calK(H)$ as an essential ideal.
Take a minimal projection $f$ in $\calK(H)$.
In this section, we begin with only $f \in \calK(H) \triangleleft \F$
by which one may construct the Cuntz--Pimsner algebra $\OHF$.
We need not provide a Kirchberg algebra $\A$ in this section,
so we do not assume the hypothesis \eqref{eq:KFExtsA}. 
\begin{lemma}\label{lem:UniversalAhat1}
The $C^*$-algebra $\O_{H_\F}$ is generated by $\F$ and isometries $S_i, i \in \N$
satisfying
\begin{equation}\label{eq:UniversalAhat1}
 S_i^* S_i = 1_{\O_{H_\F}}, \quad i \in \N
 \quad \text{ and } \quad
\sum_{j=1}^m S_j S_j^* < f, \quad m \in \N.
\end{equation}
\end{lemma}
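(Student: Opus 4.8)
The plan is to realize the required isometries as the canonical generating (creation) operators of the Cuntz--Pimsner algebra $\O_{H_\F}$ associated with particular vectors of the bimodule $H_\F$, and then to read off all four relations from Pimsner's bimodule relations together with the identification $\O_{H_\F}=\T_{H_\F}$.

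First I would fix a unit vector $\xi_0 \in H$ with $f = |\xi_0\rangle\langle\xi_0|$ (possible since $f$ is a minimal projection of $\calK(H)$), and for $\xi \in H_\F$ write $S_\xi$ for the corresponding generating operator of $\O_{H_\F}$, so that $S_\xi^* S_\eta = \langle \xi \mid \eta\rangle_\F$, together with $a S_\xi = S_{\varphi_\F(a)\xi}$ and $S_\xi a = S_{\xi a}$ for $a \in \F$. Then I set
\[
S_i := S_{\xi_0 \otimes e_i \otimes 1_\F}, \qquad i \in \N .
\]
The isometry relation is immediate: $S_i^* S_i = \langle \xi_0 \otimes e_i \otimes 1_\F \mid \xi_0 \otimes e_i \otimes 1_\F\rangle_\F = \langle\xi_0\mid\xi_0\rangle_H\, 1_\F = 1_\F = 1_{\O_{H_\F}}$, and for $i \neq j$ the same computation gives $S_i^* S_j = 0$, so the range projections $S_iS_i^*$ are mutually orthogonal and $\sum_{j=1}^m S_j S_j^*$ is a projection. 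For domination by $f$, I use $\pi_\F(f)\xi_0 = \xi_0$ to obtain $\varphi_\F(f)(\xi_0 \otimes e_i \otimes 1_\F) = \xi_0 \otimes e_i \otimes 1_\F$, hence $f S_i = S_{\varphi_\F(f)(\xi_0\otimes e_i\otimes 1_\F)} = S_i$; therefore $f S_iS_i^* = S_iS_i^*$, i.e. $S_iS_i^* \le f$, and $\sum_{j=1}^m S_jS_j^* \le f$ for every $m$.

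The one relation needing more than a formal manipulation is the strict inequality $\sum_{j=1}^m S_jS_j^* < f$, and this is where the infinite dimensionality of $\ell^2(\N)$ (equivalently, the non-compactness of $\varphi_\F(f)$) enters and is the main obstacle. Since $\varphi_\F$ is injective and $\varphi_\F(\F)\cap\calK(H_\F)=\{0\}$, the ideal defining the Cuntz--Pimsner quotient is trivial, so $\O_{H_\F}=\T_{H_\F}$ and I may argue inside the faithful Fock representation on $\bigoplus_{n\ge 0}H_\F^{\otimes n}$, with $H_\F^{\otimes 0}=\F$. On the degree-zero summand each annihilation operator $S_j^*$ vanishes, so $\sum_{j=1}^m S_jS_j^*$ acts as $0$ there, whereas $f$ acts by left multiplication, and the vector $f\in H_\F^{\otimes 0}$ satisfies $f\cdot f = f \neq 0$. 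Thus $f - \sum_{j=1}^m S_jS_j^*$ is a nonzero projection, which gives the strict inequality uniformly in $m$ (indeed even for the full infinite sum).

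Finally, for generation I would show that $\F$ and the $S_i$ recover every $S_\xi$. Taking $a \in \calK(H)\subset\F$ of rank one, $a = |\eta\rangle\langle\xi_0|$, one computes $a S_i x = S_{(\pi_\F(a)\xi_0)\otimes e_i \otimes x} = S_{\eta \otimes e_i \otimes x}$ for all $\eta \in H$, $x \in \F$, $i \in \N$; by linearity and continuity these exhaust $\{S_\xi : \xi \in H_\F\}$, which together with $\F$ generate $\O_{H_\F}$ by Pimsner's construction. Everything apart from the strict inequality is routine bookkeeping with the bimodule relations.
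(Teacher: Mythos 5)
Your argument is correct and follows essentially the same route as the paper: both realize $S_i$ as the creation operators $T_{\xi_0\otimes e_i\otimes 1_\F}$ on the Fock space, derive the isometry and orthogonality relations from the Pimsner inner-product relation, obtain $\sum_j S_jS_j^*\le \varphi_+(f)$ from $\pi_\F(f)\xi_0=\xi_0$, and get strictness by noting that $\varphi_+(f)$ acts nontrivially on the degree-zero summand $\F\subset\E_\F$ where every $S_jS_j^*$ vanishes (the paper phrases this by writing out the ranges of $S_kS_k^*$ and of $\varphi_+(f)$ explicitly as subspaces of the Fock space). The generation step via $aS_ix=T_{(\pi_\F(a)\xi_0)\otimes e_i\otimes x}$ is likewise the paper's identity $\varphi_+(x)T_{\xi_0\otimes e_k\otimes 1_\F}\varphi_+(y)=T_{\varphi(x)\xi_0\otimes e_k\otimes y}$.
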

\begin{proof}
We follow the previous construction of the $C^*$-algebra $\OHF$ in detail.
Take a unit vector $\xi_0 \in H$ such that $ f H= \mathbb{C}\xi_0$.
The Hilbert $C^*$-bimodule $H_\F = H \otimes_{\mathbb{C}} \ell^2(\N) \otimes_{\mathbb{C}} \F$ 
has a left action $\varphi_\F: \F \rightarrow \L(H_\F)$
defined by $\varphi_\F(a) = \pi_\F(a) \otimes 1_{\ell^2(\N)} \otimes 1_\F$ for $a \in \F$.
We write $\pi_\F(a)$ as $a$ for short.
The action $\varphi_\F$ is denoted by $\varphi$.
Consider the Fock space $\E_\F$ of $H_\F$ defined by
\begin{equation*}
\E_\F :=\F \oplus \bigoplus_{n=1}^\infty H_\F^{\otimes n}
\end{equation*}
where 
$H_\F^{\otimes n}$ is the $n$-times relative tensor product 
$\overbrace{H_\F \otimes_\F H_\F \otimes _\F \cdots \otimes_\F H_F}^{\text{$n$ times}}$ 
over $\F$.
The creation operator 
$T_{\xi\otimes e_k \otimes y}$ for $\xi \otimes e_k \otimes y \in H_\F$ 
is defined by 
\begin{align*}
& T_{\xi\otimes e_k \otimes y} (a) = \xi\otimes e_k \otimes ya, \quad a \in \F,\\
& T_{\xi\otimes e_k \otimes y} (\zeta_1\otimes \zeta_2\otimes \cdots \otimes \zeta_n)
 = (\xi\otimes e_k \otimes y)\otimes \zeta_1\otimes \zeta_2\otimes \cdots \otimes \zeta_n 
 \in H_\F^{\otimes n+1}
\end{align*}
for $\zeta_1\otimes \zeta_2\otimes \cdots \otimes \zeta_n \in H_\F^{\otimes n}$.
The Toeplitz algebra $\T_{H_\F}$ for the $C^*$-bimodule $H_\F$ 
is defined by the $C^*$-algebra 
$C^*(T_{\xi\otimes e_k \otimes y}\mid \xi\in H_\F,\, k \in \N,\, y \in \F)$
generated by the creation operators
$T_{\xi\otimes e_k \otimes y} \in \L(\E_\F)$
for $\xi\otimes e_k \otimes y \in H_\F$.
Define $\varphi_+: x \in \F \rightarrow \varphi_+(x) \in \L(\E_\F)$ by
\begin{align*}
& \varphi_+(x) (a) = xa, \quad a \in \F, \\ 
&\varphi_+(x)(\zeta_1\otimes \zeta_2\otimes \cdots \otimes \zeta_n)
=(\varphi(x)\zeta_1)\otimes \zeta_2\otimes \cdots \otimes \zeta_n,
\end{align*}
$\zeta_1\otimes \zeta_2\otimes \cdots \otimes \zeta_n \in H_\F^{\otimes n}.$
We then have
\begin{equation}\label{eq:Txi0ek1*Txi0ek1}
T_{\xi\otimes e_k \otimes y}^* T_{\eta\otimes e_l \otimes z}
=  \varphi_+(<\xi\otimes e_k \otimes y \mid \eta\otimes e_l \otimes z >)
=  \varphi_+(<\xi\mid \eta>_H \delta_{k,l} y^*z)
\end{equation}
for $\xi\otimes e_k \otimes y, \, \eta\otimes e_l \otimes z \in H_\F$,
and
\begin{equation*}
 \varphi_+(x) T_{\xi_0\otimes e_k \otimes 1_\F} \varphi_+(y)
 =T_{\varphi(x) \xi_0\otimes e_k \otimes y}.
\end{equation*}
Hence the $C^*$-algebra $\T_{H_\F}$
is generated by 
$\varphi_+(x), x \in \F$ and $T_{\xi_0\otimes e_k\otimes 1_\F}, k \in \N$.
We write 
\begin{equation}\label{eq:defSk}
S_k := T_{\xi_0\otimes e_k\otimes 1_\F}
\end{equation}for $k \in \N$.
By \eqref{eq:Txi0ek1*Txi0ek1}, we have
$S_k^* S_l = \delta_{k,l}.$
It is easy to see that 
\begin{equation*}
S_k S_k^* (\zeta_1\otimes \zeta_2\otimes \cdots \otimes \zeta_n)
=  
 {\begin{cases}
\zeta_1\otimes \zeta_2\otimes \cdots \otimes \zeta_n 
& \text{ if } \zeta_1 \in \mathbb{C}\xi_0\otimes \mathbb{C}e_k \otimes \F, \\
0 & \text{ otherwise,}
\end{cases} }
\end{equation*}
so that $S_k S_k^*$ is the projection onto the subspace spanned by
\begin{equation*}
 (\xi_0\otimes e_k \otimes \F) \oplus \bigoplus_{n=1}^\infty (\xi_0\otimes e_k \otimes \F)\otimes H_\F^{\otimes n}.
\end{equation*}
The projection onto the subspace
\begin{equation*}
f \F \oplus  (\xi_0\otimes \ell^2(\N) \otimes \F) 
\oplus \bigoplus_{n=1}^\infty (\xi_0\otimes \ell^2(\N) \otimes \F)\otimes H_\F^{\otimes n}
\end{equation*}
corresponds to the minimal projection $f$ in $\calK(H)$
by identifying $f$ with $\varphi_+(f).$
We may regard $f$ as an element of $\OHF$
through the correspondence \eqref{eq:eeA}
 under the natural identification between $\T_{H_\F}$ and $\OHF$. 
Since $S_k S_k^* < f$ for all $k \in \N$ and $\{S_kS_k^*\}_{k\in\mathbb{N}}$ are mutually orthogonal, we have
\begin{equation*}
\sum_{j=1}^m S_j S_j^* < f \quad \text{ for all } m \in \N.
\end{equation*}
We may write $\varphi_+(x) T_{\xi\otimes e_k \otimes 1}\varphi_+(y)$ as 
$x S_k y$ for short, so that the $C^*$-algebra $\OHF$ is generated by $\F$ and $S_k, k\in \Z$ 
satisfying \eqref{eq:UniversalAhat1}.
\end{proof}
We note that the algebraic structure of the Cuntz--Pimsner algebra $\OHF$
is determind by the following two relations (cf. \cite[Definition 4.6.14]{BO}):
\begin{equation*}
\begin{cases}
& \bullet \quad T_{\xi\otimes e_k \otimes y}^* T_{\eta\otimes e_l \otimes z}
=  <\xi\mid \eta>_H \delta_{k,l} \varphi_+( y^*z), \\
& \bullet \quad \varphi_+(x) T_{\xi\otimes e_k \otimes 1}\varphi_+(y) = T_{x\xi\otimes e_k \otimes y}
\end{cases}
\end{equation*}
for $\xi, \eta \in H_\F, \, k,l \in \N, \, x,y \in \F.$
\begin{lemma}\label{lem:universalityforHF}
Keep the unital $C^*$-algebra $\F$ containing $\calK(H)$ as an essential ideal,
and fix a minimal projection $f \in \calK(H)$.
Suppose that there exist a unital representation
$\pi: \F \rightarrow \B$ of $\F$ to a unital $C^*$-algebra $\B$
and a family of isometries $s_j \in \B, j \in \N$ satisfying
\begin{equation}\label{eq:sjB}
 s_i^* s_i = 1_\B \quad \text{ for } i\in \N 
 \quad \text{ and } \quad 
\sum_{j=1}^m s_j s_j^* < \pi(f) \quad \text{ for } m\in \N.
\end{equation}
Then there exists a linear map $\tau: H_\F\rightarrow \B$ such that 
\begin{equation}\label{eq:tauB}
\begin{cases}
& \bullet \quad \tau(a u b) = \pi(a) \tau(u) \pi(b)  \quad \text{ for } u \in H_\F \, \, \text{ and } \, a,b \in \F, \\
& \bullet \quad \tau(u)^* \tau(v) = \pi(<u\mid v>_\F) \quad \text{ for } u, v \in H_\F.
\end{cases}
\end{equation}
Hence the pair $(\pi,\tau)$ gives rise to a covariant representation of the Hilbert $C^*$-bimodule
$H_\F$ such that 
the $C^*$-algebra $C^*(\tau(u)\mid u \in H_\F)$ generated by 
$\tau(u), u \in H_\F$ coincides with 
the $C^*$-algebra $C^*(\pi(\F), \{ s_j\}_{j \in \N})$ generated by 
$\pi(\F)$ and $\{s_j\}_{j \in \N}.$
\end{lemma}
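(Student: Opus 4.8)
The plan is to write $\tau$ down by an explicit formula on elementary tensors and then extend it by $\mathbb{C}$-multilinearity and continuity. Fix the unit vector $\xi_0 \in H$ with $fH = \mathbb{C}\xi_0$, and recall that $\pi_\F$ restricts to the identity on the essential ideal $\calK(H) \subseteq \F$ (being the canonical embedding $\F \hookrightarrow M(\calK(H)) = \B(H)$). Consequently every $\xi \in H$ equals $\pi_\F(a)\xi_0$ for some $a \in \F$: one may take $a$ to be the rank-one operator in $\calK(H) \subseteq \F$ sending $\xi_0$ to $\xi$ and vanishing on $\{\xi_0\}^\perp$. I would then set
\[
\tau(\xi \otimes e_k \otimes y) := \pi(a)\, s_k\, \pi(y), \qquad \pi_\F(a)\xi_0 = \xi,
\]
and extend linearly over the algebraic tensor product underlying $H_\F$.

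The genuinely delicate point, and the heart of the lemma, is that this value does not depend on the choice of the representative $a$. Here the two relations in \eqref{eq:sjB} are exactly what is needed: from $s_k s_k^* \le \pi(f)$ and $s_k^* s_k = 1_\B$ one deduces $s_k = \pi(f)s_k$, so that $\pi(a)s_k = \pi(af)s_k$ for every $a$. Since $\pi_\F(a)\xi_0 = \xi$ forces $af$, regarded inside $\calK(H)$, to be the fixed rank-one operator sending $\xi_0$ to $\xi$, the product $\pi(af)s_k$ depends only on $\xi$, and hence $\tau(\xi\otimes e_k \otimes y)$ depends only on $(\xi, k, y)$. This is the only step I expect to require real care; the remainder is verification.

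Next I would check the two identities of \eqref{eq:tauB} on elementary tensors. The bimodule identity $\tau(aub) = \pi(a)\tau(u)\pi(b)$ is immediate from the formula, using that if $\pi_\F(c)\xi_0 = \xi$ then $\pi_\F(ac)\xi_0 = \pi_\F(a)\xi$ represents the left-translated vector, and that $\pi$ is multiplicative for the right action. For the inner-product identity I would again use $s_k = \pi(f)s_k$ to rewrite $s_k^*\pi(a^*b)s_l = s_k^*\pi(f a^* b f)s_l$; minimality of $f$ gives $f\F f = \mathbb{C}f$, so $f a^* b f = \langle \xi \mid \eta\rangle_H\, f$, and together with the orthogonality relation $s_k^* s_l = \delta_{k,l}1_\B$ (a consequence of $\sum_j s_j s_j^* \le 1_\B$ in \eqref{eq:sjB}) this gives $s_k^*\pi(a^*b)s_l = \langle \xi\mid\eta\rangle_H\,\delta_{k,l}1_\B$; flanking by $\pi(y)^*$ and $\pi(z)$ reproduces $\pi(\langle u \mid v\rangle_\F)$. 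Passing to finite sums and using sesquilinearity gives $\tau(u)^*\tau(v) = \pi(\langle u\mid v\rangle_\F)$ on the algebraic level, whence $\|\tau(u)\|^2 = \|\pi(\langle u\mid u\rangle_\F)\| \le \|u\|_{H_\F}^2$; thus $\tau$ is contractive and extends continuously to all of $H_\F$ with both identities preserved.

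Finally, for the identification of the generated $C^*$-algebras, each $\tau(\xi\otimes e_k \otimes y) = \pi(a)s_k\pi(y)$ manifestly lies in $C^*(\pi(\F),\{s_j\}_{j\in\N})$, giving one inclusion. Conversely, taking $\xi = \xi_0$ and $y = 1_\F$ recovers $s_j = \tau(\xi_0\otimes e_j \otimes 1_\F)$, while $\tau(\xi_0\otimes e_1 \otimes 1_\F)^*\,\tau(\xi_0\otimes e_1 \otimes z) = \pi(z)$ recovers all of $\pi(\F)$, giving the reverse inclusion. Since $\varphi_\F(\F)\cap\calK(H_\F) = \{0\}$, the Toeplitz representation $(\pi,\tau)$ automatically satisfies the Cuntz--Pimsner covariance condition, so it is a covariant representation of $H_\F$ and the proof is complete.
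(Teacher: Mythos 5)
Your proposal is correct and follows essentially the same route as the paper: the paper also defines $\tau(\xi\otimes e_k\otimes y)=\pi(t_{\xi,\xi_0})s_k\pi(y)$ with $t_{\xi,\xi_0}\in\calK(H)\subset\F$ the rank-one operator sending $\xi_0$ to $\xi$, verifies the two identities of \eqref{eq:tauB} using $t_{\xi_0,\xi_0}=f$ and $s_ks_k^*<\pi(f)$, invokes $\varphi_\F(\F)\cap\calK(H_\F)=\{0\}$ together with Kumjian's criterion for covariance, and recovers $s_k$ and $\pi(a)$ from $\tau$ exactly as you do. Your explicit well-definedness and contractivity checks are slightly more detailed than the paper's but add nothing essentially different.
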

\begin{proof}
Take a unit vector $\xi_0 \in H$ such that 
$f H = \mathbb{C} \xi_0.$
For $\xi\in H_\F$, let us denote by 
$t_{\xi, \xi_0}\in \calK(H)$ the operator on $H$ defined by
$t_{\xi,\xi_0}(\zeta) = \xi<\xi_0\mid \zeta>_H, \, \zeta \in H_\F.$
As $\calK(H) \subset \F,$
it belongs to $\F.$
For $u = \xi\otimes e_k\otimes y \in H_\F,$ define
\begin{equation*}
\tau(u) = \pi(t_{\xi,\xi_0}) s_k \pi(y),
\end{equation*}
and extend it linearly to $H_\F$
in a natural way.
As 
$t_{\xi_0, \xi_0} =f$ and $s_k s_k^* < \pi(f),$
we have for $a,b \in \F$, 
$u = \xi \otimes e_k \otimes y, \, v = \eta \otimes e_l\otimes z \in H_\F$,
\begin{equation*}
\tau(aub)
= \pi(t_{a\xi, \xi_0}) s_k \pi(yb) =  \pi(a) \pi(t_{\xi, \xi_0}) s_k \pi(y) \pi(b) 
=  \pi(a) \tau(u)  \pi(b) \
\end{equation*}
and
\begin{equation*}
\tau(u)^* \tau(v)
=  \pi(y)^* s_k^* \pi(<\xi \mid \eta>_H e) s_l \pi(z)  
=  \pi(<u \mid v>_{H_\F}).
\end{equation*}
As 
$\varphi(\F) \cap \calK(H_\F) = \{ 0 \}$,
\cite[Proposition 2.1]{Kumjian}
shows that the representation
$(\pi,\tau)$ is covariant by  \cite[Definition 4.6.9]{BO}.
For $k \in \N$ and $a \in \F$, we have
$s_k = \tau(\xi_0\otimes e_k \otimes 1)$ and
\begin{align*}
\pi(a) 
= &
\pi(a) s_k^* s_k 
= \tau(a<\xi_0\otimes e_k \otimes 1_{\ell^2(\N)} \mid \xi_0\otimes e_k \otimes 1_{\ell^2(\N)} >_\F) \\
=& \tau(<\xi_0\otimes e_k \otimes a^* \mid \xi_0\otimes e_k \otimes 1 >_\F),
\end{align*}
showing that 
$C^*(\tau(u)\mid u \in H_\F)=
C^*(\pi(\F), \{ s_j\}_{j \in \N}).$ 
\end{proof}
Therefore we have the following proposition.
\begin{proposition}\label{prop:universalityforOHF}
Keep the unital $C^*$-algebra $\F$ containing $\calK(H)$ as an essential ideal,
and fix a minimal projection $f \in \calK(H)$.
Suppose that there exist a unital representation
$\pi: \F \rightarrow \B$ of $\F$ to a unital $C^*$-algebra $\B$
and a family of isometries $s_j \in \B, j \in \N$ 
satisfying \eqref{eq:sjB}.
Then there exists a surjective $*$-isomorphism
$\Phi: \OHF \rightarrow C^*(\pi(\F), \{s_j\}_{j\in \N})$
such that 
\begin{equation}\label{eq:3.7}
\Phi(S_i) = s_i \quad \text{ for } i \in \N
\quad \text{ and } \quad 
\Phi(\varphi_+(x)) =\pi(x) \quad \text{ for } x \in \F.
\end{equation}
Hence we have $\OHF \cong C^*(\pi(\F), \{s_j\}_{j\in \N}).$
\end{proposition}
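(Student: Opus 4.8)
The plan is to invoke the universal property of the Cuntz--Pimsner algebra $\OHF$ for the covariant representation $(\pi,\tau)$ produced in Lemma \ref{lem:universalityforHF}, and then upgrade the resulting $*$-homomorphism to an isomorphism by a simplicity argument. First I would recall that $\OHF$ is the universal $C^*$-algebra for covariant representations of the Hilbert $C^*$-bimodule $H_\F$ satisfying the Cuntz--Pimsner covariance condition (this is the content of Kumjian's \cite[Proposition 2.1]{Kumjian} together with the fact, already noted in the excerpt, that $\varphi(\F)\cap\calK(H_\F)=\{0\}$, which makes the Toeplitz and Cuntz--Pimsner algebras coincide). By Lemma \ref{lem:universalityforHF}, the hypotheses \eqref{eq:sjB} yield a covariant representation $(\pi,\tau)$ of $H_\F$ into $\B$; the universal property therefore gives a surjective $*$-homomorphism
\begin{equation*}
\Phi : \OHF \longrightarrow C^*(\tau(u)\mid u\in H_\F) = C^*(\pi(\F),\{s_j\}_{j\in\N})
\end{equation*}
onto the generated subalgebra. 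On the distinguished generators, $\Phi$ sends the creation operator $T_{\xi\otimes e_k\otimes y}$ to $\tau(\xi\otimes e_k\otimes y)$; in particular, by the formula $S_k = T_{\xi_0\otimes e_k\otimes 1_\F}$ of \eqref{eq:defSk} and the identity $s_k=\tau(\xi_0\otimes e_k\otimes 1)$ from the lemma, we get $\Phi(S_i)=s_i$, and from $\varphi_+(x)\mapsto\pi(x)$ we get the second relation in \eqref{eq:3.7}. So the existence of $\Phi$ with the stated values on generators is essentially immediate from the lemma plus functoriality of the Cuntz--Pimsner construction.

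The main obstacle is injectivity of $\Phi$, i.e. proving that $\Phi$ is an isomorphism rather than merely a surjection. The cleanest route is to appeal to simplicity of $\OHF$: as established in the review in Section \ref{sect:Reviewreciprocal} (via Kumjian's \cite[Theorem 3.1]{Kumjian} and Pimsner's \cite[Corollary 4.5]{Pimsner}), the Cuntz--Pimsner algebra $\OHF$ is simple. Since $\Phi$ is a nonzero $*$-homomorphism on a simple $C^*$-algebra, its kernel is a proper closed two-sided ideal, hence $\{0\}$, so $\Phi$ is automatically injective. I would state this explicitly, because it is what rules out any collapsing of relations and is exactly why no further verification of the covariance relations is needed.

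Once $\Phi$ is shown to be an injective surjection, it is a $*$-isomorphism onto its image $C^*(\pi(\F),\{s_j\}_{j\in\N})$, giving the displayed isomorphism $\OHF\cong C^*(\pi(\F),\{s_j\}_{j\in\N})$ and completing the proof. I would emphasize that all the genuine work was carried out in Lemma \ref{lem:universalityforHF}: verifying the bimodule compatibility $\tau(aub)=\pi(a)\tau(u)\pi(b)$, the inner-product relation $\tau(u)^*\tau(v)=\pi(\langle u\mid v\rangle_\F)$, and covariance of $(\pi,\tau)$. The proposition is then a short formal consequence, combining the universal property of $\OHF$ with its simplicity, so the proof need not reproduce any of those computations and can be kept to a few lines.
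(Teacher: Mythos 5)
Your proposal is correct and follows essentially the same route as the paper's own proof: apply the universal property of $\OHF$ to the covariant representation $(\pi,\tau)$ from Lemma \ref{lem:universalityforHF} to get a surjective $*$-homomorphism with the stated values on generators, then invoke simplicity of $\OHF$ (the paper cites Kumjian's simplicity theorem) to conclude injectivity. No substantive differences.
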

\begin{proof}
As in \cite[Definition 4.6.14]{BO},
the Cuntz--Pimsner algebra $\OHF$ has the universality
for a covariant representation
such that there exists 
 a surjective $*$-homomorphism
$\Phi: \OHF \rightarrow C^*(\tau (u)\mid u\in H_\mathcal{F})=C^*(\pi(\F), \{s_j\}_{j\in \N})$
satisfying 
\begin{equation*}
\Phi(T_u) = \tau(u) \quad \text{ for } u \in H_\F
\quad \text{ and } \quad 
\Phi(\varphi_+(x)) =\pi(x) \quad \text{ for } x \in \F.
\end{equation*}
As 
$
\tau(\xi_0\otimes e_k \otimes 1)
=\pi(t_{\xi_0,\xi_0}) s_k \pi(1)
= s_k,
$
we obtain \eqref{eq:3.7}.
 As in \cite[Theorem 2.8]{Kumjian}, the $C^*$-algebra
$\OHF$ is simple, 
so that $\Phi: \OHF \rightarrow C^*(\pi(\F), \{s_j\}_{j\in \N})$
is isomorphic.
\end{proof}
\begin{remark}\label{rem:faithfulness}
(i)
We have to remark that a unital representation $\pi:\F\rightarrow \B$ 
in Proposition \ref{prop:universalityforOHF}
 with partial isometries $\{ s_j \}_{j \in \N}$
 satisfying the relations \eqref{eq:sjB}
 automatically becomes faithful, because
 $\F$ contains $\calK(H)$ as an essential ideal and $\pi(f)$
 for  the minimal projection $f$ does not vanish 
 by \eqref{eq:sjB}.
 One in fact sees that if $\ker(\pi) \ne \{0 \},$ then
 $\ker(\pi) \cap \calK(H) \ne \{0 \}$ and hence 
 $\ker(\pi) \supset \calK(H)$ so that $f \in \ker(\pi)$,
 a contradiction to the relations \eqref{eq:sjB}.
 
 (ii) If in particular $\F$ is a nuclear UCT $C^*$-algebra, 
 so is the $C^*$-algebra $\OHF$ by \cite[Theorem 3.1]{Kumjian},
 so that $\OHF$ is a Kirchberg algebra.
 \end{remark}  
The proposition above says the following theorem.
\begin{theorem}\label{thm:universalityforOHF}
Let $\F$ be a separable unital $C^*$-algebra containing $\calK(H)$ as an essential ideal,
and  $f \in \calK(H)$ a fixed minimal projection.
 The $C^*$-algebra $\OHF$ is the universal unique unital $C^*$-algebra
 $C^*_{\text{univ}}(\F, \{S_j\}_{j \in \N})$ generated by 
 $\F$ and a family $\{S_j\}_{j \in \N} $ of isometries subject to the relations:
 \begin{equation}\label{eq:SiB}
S_i^* S_i = 1_\F \quad \text{ for } i\in \N 
\quad \text{ and } \quad
\sum_{j=1}^m S_j S_j^* < f \quad \text{ for } m\in \N.
\end{equation}
\end{theorem}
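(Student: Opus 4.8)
The plan is to deduce the theorem directly from Proposition \ref{prop:universalityforOHF} together with Lemma \ref{lem:UniversalAhat1}, treating separately the existence of a model, the universal (factorization) property, and the uniqueness up to isomorphism. Since the two preceding results already carry the substance, the argument is mostly a matter of phrasing the universal property correctly.

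First I would record that $\OHF$ is a legitimate model: by Lemma \ref{lem:UniversalAhat1} it is generated by the copy $\varphi_+(\F)$ of $\F$ together with the isometries $\{S_j\}_{j\in\N}$, and these satisfy exactly the relations \eqref{eq:SiB}. Thus the prescribed data $(\F,\{S_j\})$ subject to \eqref{eq:SiB} is realizable, and $\OHF$ is the candidate for the universal object. Next I would verify the factorization property. Let $\B$ be any unital $C^*$-algebra equipped with a unital representation $\pi\colon \F\to\B$ and isometries $\{s_j\}_{j\in\N}\subset\B$ satisfying \eqref{eq:sjB}. Proposition \ref{prop:universalityforOHF} produces a surjective $*$-isomorphism $\Phi\colon \OHF\to C^*(\pi(\F),\{s_j\}_{j\in\N})$ with $\Phi(\varphi_+(x))=\pi(x)$ for $x\in\F$ and $\Phi(S_j)=s_j$ for $j\in\N$. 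Composing $\Phi$ with the inclusion $C^*(\pi(\F),\{s_j\})\hookrightarrow\B$ yields a $*$-homomorphism $\OHF\to\B$ carrying the generators to $\pi$ and $\{s_j\}$; because $\OHF$ is generated by $\varphi_+(\F)$ and $\{S_j\}$, this homomorphism is the unique one with these properties. This is precisely the universal factorization property characterizing $C^*_{\text{univ}}(\F,\{S_j\})$.

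Finally I would obtain uniqueness by the standard categorical argument. If $\mathcal{V}$ is a unital $C^*$-algebra generated by a unital copy $\jmath(\F)$ of $\F$ and isometries $\{v_j\}$ obeying \eqref{eq:SiB} and possessing the same factorization property, then the universal property of $\OHF$ gives a $*$-homomorphism $\OHF\to\mathcal{V}$, while that of $\mathcal{V}$ gives one $\mathcal{V}\to\OHF$; both compositions fix the generators and hence are the respective identities, so $\OHF\cong\mathcal{V}$ canonically. This identifies $\OHF$ as the universal unique algebra $C^*_{\text{univ}}(\F,\{S_j\}_{j\in\N})$.

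I do not expect a genuine obstacle, as the work is done in Lemma \ref{lem:UniversalAhat1} and Proposition \ref{prop:universalityforOHF}; the only point requiring care is conceptual. In Proposition \ref{prop:universalityforOHF} the map $\Phi$ is an isomorphism onto its image, whereas the universal property asks only for a $*$-homomorphism into the target. The additional injectivity stems from the simplicity of $\OHF$ (cf.\ \cite[Theorem 2.8]{Kumjian}) and, as in Remark \ref{rem:faithfulness}, forces every representation satisfying \eqref{eq:sjB} to be automatically faithful; for the universal property itself only the surjective $*$-homomorphism onto the generated subalgebra is needed. I would therefore state the property in terms of $*$-homomorphisms and note separately that these happen to be injective here.
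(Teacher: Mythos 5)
Your proposal is correct and follows essentially the same route as the paper: the paper presents Theorem \ref{thm:universalityforOHF} as an immediate restatement of Proposition \ref{prop:universalityforOHF} (with Lemma \ref{lem:UniversalAhat1} supplying the model), which is exactly the deduction you spell out. Your additional remarks on the categorical uniqueness argument and on the distinction between the factorization property and the automatic injectivity coming from simplicity are accurate elaborations of what the paper leaves implicit.
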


\section{Universality of the reciprocal algebra $\whatA$}
Keep the situation 
that $\F$ is a separable unital $C^*$-algebra containing $\calK(H)$ 
as an essential ideal and fix a minimal projection $f \in \calK(H)$.
Let us represent $\OHF$ to be the $C^*$-algebra
$C^*_{\text{univ}}(\F, \{S_j\}_{j \in \N})$ as in Theorem \ref{thm:universalityforOHF}.
The unit $1_\F$ of $\F$ coincides with the unit $1_\OHF$ of $\OHF$.
We define a $C^*$-algebra  $\T$  and a projection $e \in \T$ by
\begin{equation} \label{eq:defT}
\T := S_1 \F S_1^* \qquad 
e:= S_1 f S_1^*,
\end{equation}
and a family $\{ T_j \}_{j \in \N}$
of partial isometries in $C^*_{\text{univ}}(\F, \{S_j\}_{j \in \N})$ by
\begin{equation}\label{eq:defTj}
T_1 := S_1 f 
\quad
\text{ and }
\quad
T_j := 
S_j S_1^*  \quad \text{for} \quad j\ge 2.
\end{equation}
Hence the unit $1_\T$ of the algebra $\T$ is $S_1 S_1^*$.
Note that $\mathcal{T}\not\subset\mathcal{F}$.
\begin{lemma}\label{lem:relationsTjeV}
Put
$
V: = S_1 \,\, (\not\in C^*(\T, \{T_j\}_{j \in \N})). 
$
\begin{enumerate}
\renewcommand{\theenumi}{(\roman{enumi})}
\renewcommand{\labelenumi}{\textup{\theenumi}}
\item
The following identities (T1), (T2), (T3) and (T4) hold:

$(T1)\quad  T_j^* T_j = 1_\T \quad j\ge 2.$

$(T2) \quad T_1 T_1^* = e, \qquad 1_\T + \sum_{j=2}^m T_j T_j^* < T_1^* T_1 $ for  $m \ge 2$.

$(T3) \quad T_1 T_1^*= V T_1^*, \qquad T_1^* T_1= V^* T_1.$ 

$(T4) \quad V V^* = 1_\T, \qquad V^*V = 1_{\OHF}.$
\item
The $C^*$-algebra $\OHF$ is generated by $\T, $  $\{T_j\}_{j \in \N}$ and $V$, that is,
\begin{equation}\label{eq: OHFTTjV}
\OHF =  C^*(\T, \{T_j\}_{j \in \N}, V).
\end{equation}
\end{enumerate}
\end{lemma}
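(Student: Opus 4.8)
The plan is to verify each of (T1)--(T4) by direct substitution of the definitions \eqref{eq:defT}, \eqref{eq:defTj} together with $V := S_1$, using only three ingredients: the defining isometry relations $S_i^* S_i = 1_{\OHF}$, the fact that $f = f^* = f^2$, and the strict inequality $\sum_{j=1}^m S_j S_j^* < f$ furnished by Theorem \ref{thm:universalityforOHF}. I would first dispose of (T4) and (T1), which need nothing beyond the isometry relations: since $S_1$ is an isometry with range projection $S_1 S_1^* = 1_\T$, we get $V^* V = S_1^* S_1 = 1_{\OHF}$ and $V V^* = S_1 S_1^* = 1_\T$, establishing (T4); and for $j \ge 2$ we compute $T_j^* T_j = S_1 S_j^* S_j S_1^* = S_1 \, 1_{\OHF}\, S_1^* = 1_\T$, establishing (T1).

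Next I would treat (T2) and (T3), where the projection identity for $f$ and the strict inequality enter. For the equalities in (T2), compute $T_1 T_1^* = S_1 f f^* S_1^* = S_1 f S_1^* = e$ and $T_1^* T_1 = f S_1^* S_1 f = f$. For the inequality, observe that for $j \ge 2$ one has $T_j T_j^* = S_j S_1^* S_1 S_j^* = S_j S_j^*$, so that $1_\T + \sum_{j=2}^m T_j T_j^* = S_1 S_1^* + \sum_{j=2}^m S_j S_j^* = \sum_{j=1}^m S_j S_j^*$, which is strictly dominated by $f = T_1^* T_1$. This is the single place where the strict inequality of Theorem \ref{thm:universalityforOHF} is invoked. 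The identities in (T3) follow in the same spirit: $V T_1^* = S_1 f S_1^* = T_1 T_1^*$ and $V^* T_1 = S_1^* S_1 f = f = T_1^* T_1$.

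For part (ii), the inclusion $C^*(\T, \{T_j\}_{j \in \N}, V) \subseteq \OHF$ is immediate, since each generator already lies in $C^*(\F, \{S_j\}_{j \in \N}) = \OHF$ by Lemma \ref{lem:UniversalAhat1}. For the reverse inclusion I would recover the original generating set: one has $S_1 = V$ directly; for $j \ge 2$, $T_j V = S_j S_1^* S_1 = S_j$, so each $S_j$ is recovered; and finally $\F = V^* \T V$, because for $a \in \F$ the element $S_1 a S_1^* \in \T$ satisfies $V^*(S_1 a S_1^*) V = S_1^* S_1 \, a \, S_1^* S_1 = a$. Thus $\F$ and all $S_j$ belong to $C^*(\T, \{T_j\}_{j \in \N}, V)$, which yields \eqref{eq: OHFTTjV}.

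There is no serious obstacle here; the content is careful bookkeeping with the defining relations. The one point that genuinely needs attention is the recovery of $\F$ in part (ii): as the paper already notes, $\T \not\subset \F$, so the corner $\T = S_1 \F S_1^*$ does not itself contain $\F$, and one must conjugate back by the isometry $V$ to undo the compression. Flagging that $V$ is the essential extra generator (it is not in $C^*(\T, \{T_j\}_{j \in \N})$) is the only conceptual step; everything else is routine manipulation of isometry and projection relations.
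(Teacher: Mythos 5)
Your proof is correct and follows essentially the same route as the paper's: direct verification of (T1)--(T4) from the definitions and the relations of Theorem \ref{thm:universalityforOHF}, and for (ii) the same recovery of $\F$ via $a = S_1^*\cdot S_1 a S_1^*\cdot S_1$ and of $S_j$ via $S_j = S_j S_1^*\cdot S_1$. No gaps.
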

\begin{proof}
(i)
$(T1): $ For $j \ge 2$, we have 
$
T_j^* T_j = S_1 S_j^* S_j S_1^* = S_1 1_\F S_1^* = 1_\T.
$

$(T2): $ We have
$T_1 T_1^* 
= S_1 f S_1^* = e.
$
For $m \ge 2$, we have
\begin{equation*}
1_\T + \sum_{j=2}^m T_j T_j^* 
=  S_1 S_1^*  + \sum_{j=2}^m S_jS_1^* S_1 S_j^*  
=  \sum_{j=1}^m S_j S_j^* 
< f =  f S_1^* S_1 f = T_1^* T_1. 
\end{equation*}
The identities
$
T_1 T_1^* 
= S_1 f S_1^* 
= V T_1^*, \, \, 
T_1^* T_1
= f = S_1^* S_1 f = V^* T_1, 
$
and
$V V^* =  S_1 S_1^* = S_1 1_\F S_1^* = 1_\T, \, \,
V^* V =  S_1^* S_1 = 1_\F (= 1_{\OHF})
$
show us (T3) and (T4).

(ii)
It suffices to show that 
$
\OHF 
= C^*(S_1\F S_1^*, \, S_1 f, \, \{S_j S_1^*\}_{2\le j \in \N}, \, S_1 ).
$ 
Since $\OHF = C^*(\F, \{S_j \}_{j \in \N})$,
the inclusion relation 
 $\OHF \supset C^*(S_1\F S_1^*, \, S_1 f, \, \{S_j S_1^*\}_{2\le j \in \N}, \, S_1 )$
 is obvious.
 Conversely, for $a \in \F$ and $ 2\le j \in \N$, we have
 $$
 a = S_1^* S_1 a S_1^* S_1= S_1^*\cdot  S_1 a S_1^* \cdot S_1, \qquad
 S_j = S_j S_1^* S_1  = S_j S_1^*\cdot S_1, 
 $$ 
 so that 
$\OHF \subset C^*(S_1\F S_1^*, \, S_1 f, \, \{S_j S_1^*\}_{2\le j \in \N}, \, S_1 ).$
 \end{proof}
\begin{lemma}\label{UniversalhatA2}
The corner $f \O_{H_\F} f$ of the $C^*$-algebra $\OHF$
is generated by 
\begin{equation}\label{eq:generatotsSnF}
S_n \F S_n^*, \quad S_n f, \quad S_n S_i S_n^*, \qquad n, i\in \N.
\end{equation}
\end{lemma}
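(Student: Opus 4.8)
The plan is to prove the two inclusions between $f\OHF f$ and the $C^*$-subalgebra $\C := C^*(S_n\F S_n^*,\, S_n f,\, S_n S_i S_n^* \mid n,i\in\N)$ separately; call the three kinds of generators (A), (B), (C) in the order listed. The elementary ingredient, used throughout, is that $\sum_{j=1}^m S_jS_j^*<f$ forces $S_nS_n^*\le f$, whence $fS_n=S_n$ and $S_n^*f=S_n^*$ for every $n$. From these identities it is immediate that each listed generator $x$ satisfies $fxf=x$, so $\C\subseteq f\OHF f$. For the reverse inclusion I would start from the Toeplitz normal form $\OHF=\overline{\Span}\{S_\mu aS_\nu^*\mid a\in\F,\ \mu,\nu\ \text{multi-indices}\}$, with $S_\mu=S_{\mu_1}\cdots S_{\mu_p}$; this holds because $S_k^*S_l=\delta_{k,l}1_\F$ collapses the intermediate words, and $\OHF\cong\T_{H_\F}$. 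Compressing a monomial and using $fS_n=S_n$, $S_n^*f=S_n^*$ reduces everything to four cases: $f(S_\mu aS_\nu^*)f$ equals $S_\mu aS_\nu^*$ when $|\mu|,|\nu|\ge1$; equals $S_\mu(af)$ or $(fa)S_\nu^*$ (with $af,fa\in\calK$ since $\calK\triangleleft\F$) when one index string is empty; and lies in $f\F f=\mathbb{C}f$ when both are empty. It therefore suffices to place each of these in $\C$.

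The decisive step, and the one I expect to be the main obstacle, is to produce the ``off-diagonal'' partial isometries $S_mS_n^*$ inside $\C$, since every listed generator is either ``diagonal'' (flanked by $S_n$ and $S_n^*$ with equal index) or pinched by the rank-one projection $f$. First, $f=(S_1f)^*(S_1f)\in\C$ because $S_1^*S_1=1_\F$ and $f^2=f$. Next comes the key identity $(S_1S_j^*S_1^*)(S_1f)=S_1S_j^*f=S_1S_j^*$, where I use $S_1^*S_1=1_\F$ and then $S_j^*f=S_j^*$; since $S_1S_j^*S_1^*$ is the adjoint of a type-(C) generator and $S_1f$ is type (B), this exhibits $S_1S_j^*\in\C$, and hence $S_mS_n^*=(S_mS_1^*)(S_1S_n^*)\in\C$ for all $m,n$. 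Once the bare $S_mS_n^*$ are available, $S_maS_n^*=(S_maS_m^*)(S_mS_n^*)\in\C$ for every $a\in\F$ via a type-(A) generator, which handles the length-one instance of the main case.

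It remains to climb to arbitrary lengths, which I would do by induction on $|\mu|+|\nu|$ using the reinsertion identity $S_\mu aS_\nu^*=(S_{\mu_1}S_{\mu_2}S_{\mu_1}^*)\,S_{(\mu_1,\mu_3,\dots,\mu_p)}aS_\nu^*$, valid because $S_{\mu_1}^*S_{\mu_1}=1_\F$, together with its mirror image on the $\nu$-side; here the type-(C) generator $S_{\mu_1}S_{\mu_2}S_{\mu_1}^*$ reinstates the index dropped from the shorter word supplied by the inductive hypothesis. The degenerate case $S_\mu(af)$ with $af\in\calK$ is treated by the same induction, with base case $S_n(af)=(S_n(af)S_n^*)(S_nf)$ (using $(af)f=af$), and its adjoint covers $(fa)S_\nu^*$; finally $faf\in\mathbb{C}f\subseteq\C$ settles the last case. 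Assembling these, every compression $fwf$ of a normal-form monomial $w$ lies in the closed algebra $\C$, so $f\OHF f\subseteq\C$, and together with the easy inclusion this yields $f\OHF f=\C$, as claimed.
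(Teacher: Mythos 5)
Your easy inclusion and your algebraic identities are all correct, and the devices you use (the identity $f=(S_1f)^*(S_1f)$, producing $S_1S_j^*=(S_1S_jS_1^*)^*(S_1f)$ and hence $S_mS_n^*$ and $S_maS_n^*$ inside $\C$, and reinsertion of $S_{\mu_1}^*S_{\mu_1}=1_\F$) are essentially the same tricks the paper uses. But there is a genuine gap at the very first reduction: the claimed normal form $\OHF=\overline{\Span}\{S_\mu aS_\nu^*\mid a\in\F\}$ is false. The relation $S_k^*aS_l=\langle\xi_0\mid a\xi_0\rangle_H\,\delta_{k,l}1_\F$ does collapse every pattern $S^*\!\cdot\F\cdot S$, but it does nothing to patterns $S\cdot\F\cdot S$: for $a\in\F$ one has $aS_k=T_{a\xi_0\otimes e_k\otimes 1}$, which is \emph{not} of the form $S_kb$ (nor a limit of sums $\sum S_\mu c S_\nu^*$) unless $a\xi_0\in\mathbb{C}\xi_0$, since the left action of $\F$ on $H_\F$ does not preserve the submodule $\mathbb{C}\xi_0\otimes\ell^2(\N)\otimes\F$. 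Consequently the correct dense spanning set of $\OHF$ consists of alternating words $c_0S_{k_1}c_1S_{k_2}\cdots c_{p-1}S_{k_p}\,a\,S_{l_q}^*d_{q-1}\cdots S_{l_1}^*d_0$ with coefficients from $\F$ interspersed between consecutive creation operators (this is exactly the paper's spanning set $fa_1a_2\cdots a_kf$ with $a_i\in\F$ or $a_i=S_\mu S_\nu^*$), and your induction on $|\mu|+|\nu|$, whose reinsertion identity assumes all intermediate coefficients are trivial, never reaches these elements. Handling the interspersed coefficients is precisely the main content of the paper's proof (its three cases and the induction on the word length $k$).

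The gap is repairable with the tools you already have: replacing the false normal form by the alternating one, the insertion $S_{k_1}c\,S_{k_2}=(S_{k_1}cS_{k_1}^*)(S_{k_1}S_{k_2}S_{k_1}^*)S_{k_1}\cdots$ expresses each interspersed coefficient through a type-(A) generator and each adjacent pair of creation operators through a type-(C) generator, and an induction on the number of alternations (as in the paper) then finishes the argument. As written, however, the reduction step does not cover a general element of $f\OHF f$, so the proof is incomplete.
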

\begin{proof}
We first note that 
the identity $f = (S_1 f)^* S_1 f$ holds.
Since $\sum_{j=1}^m S_j S_j^* < f$ for all $m \in \N$,
we have $f S_n = S_n$ and $S_n^* f = S_n$ for all $n \in \N.$
Hence it is obvious to see that all the elements written in \eqref{eq:generatotsSnF}
belong to $f \O_{H_\F} f.$
Let us denote by 
$\N^* = \cup_{m=0}^\infty \N^m$ where
$\N^0=\{\emptyset\}, \N^m = \{(\mu_1,\dots, \mu_m) \mid \mu_i \in \N\}$.
For $\mu =(\mu_1,\dots, \mu_m) \in \N^m$, 
we write 
$S_\mu = S_{\mu_1}\cdots S_{\mu_m}$
and $S_\mu =1_{\OHF}$ for $\mu = \emptyset$. 
Since $S_j^*S_i = \delta_{j,i}1_{\OHF}$,
the linear span of elements 
of the form $f a_1 a_2 \cdots a_k f$ for
$a_i \in \F$ or $a_i = S_\mu S_\nu^*$ with $\mu, \nu \in \N^*$
is dense in $f \OHF f$.
As $1_{\OHF} = 1_\F \in \F$,
one may assume that  
the element 
$f a_1 a_2 \cdots a_k f$ 
satisfies  
$a_1, a_3 \in \F$ and $a_2 = S_\mu S_\nu^*$
with $\mu=(\mu_1,\dots, \mu_m), 
\nu = (\nu_1,\dots,\nu_n) \in \N^*$.

We have three cases.
 
Case 1: $\, m \ge 1$ and $n \ge 1.$

As $S_j = f S_j$ and $S_j^* = S_j^* f$,
we have
\begin{align*}
  & f a_1 a_2 a_3 \cdots a_k f \\
= & f \cdot S_1^* S_1\cdot  a_1\cdot  S_1^* S_1 \cdot f S_\mu S_\nu^* f a_3 \cdots a_k f \\
= & (S_1 f)^* S_1 a_1 S_1^*\cdot S_1 f \cdot S_{\mu_1} S_{\mu_2}
\cdots S_{\mu_m}S_{\nu_n}^*\cdots S_{\nu_1}^* f a_3 \cdots a_k f 
\end{align*}
where
\begin{equation*}
   S_{\mu_1} S_{\mu_2}\cdots S_{\mu_m}S_{\nu_n}^*\cdots S_{\nu_1}^*
=  S_{\mu_1} S_{\mu_2}S_{\mu_1}^*\cdot S_{\mu_1}S_{\mu_3} S_{\mu_1}^* 
\cdots S_{\mu_1}S_{\mu_m}S_{\mu_1}^*\cdot ( S_{\mu_1} S_{\nu_n}^*\cdots S_{\nu_1}^* f) 
\end{equation*} 
and
\begin{equation*}
  S_{\mu_1}   S_{\nu_n}^*\cdots S_{\nu_1}^* f
=  S_{\mu_1} S_{\nu_n}^*S_{\mu_1}^*\cdot S_{\mu_1} S_{\nu_{n-1}} S_{\mu_1}^*
   \cdot S_{\mu_1}\cdots S_{\mu_1} S_{\nu_1}^* S_{\mu_1}^* \cdot S_{\mu_1} f.
\end{equation*}
Hence the word 
$f a_1 a_2(= f a_1 f S_{\mu_1} S_{\mu_2}\cdots S_{\mu_m}S_{\nu_n}^*\cdots S_{\nu_1}^*) $
is written in words of elements of the form of \eqref{eq:generatotsSnF}.

Case 2: $\, m=0$ and $n \ge 1.$

We then have
\begin{align*}
  & f a_1 a_2 a_3 \cdots a_k  f \\
= & f a_1  S_\nu^* f a_3 \cdots a_k f \\
= & f \cdot S_1^* S_1\cdot a_1 \cdot S_1^* S_1\cdot S_{\nu_n}^*
\cdot S_1^* S_1 \cdots S_1^* S_1\cdot S_{\nu_1}^*\cdot S_1^* S_1
\cdot f a_3 \cdots a_k f \\
= & (S_1 f)^* \cdot S_1 a_1  S_1^* \cdot (S_1 S_{\nu_n} S_1^*)^* 
 \cdots  (S_1 S_{\nu_1} S_1^*)^* \cdot S_1 f a_3 \cdots a_k f. 
\end{align*}
Hence the word $f a_1 a_2(=f a_1  S_\nu^*)$ 
is written in words of elements of the form of \eqref{eq:generatotsSnF}.

Case 3: $\, m \ge 1$ and $n =0.$

We then have
\begin{align*}
  & f a_1 a_2 a_3 \cdots a_k f \\
= & f S_1^* S_1 a_1  S_1^* S_1 S_{\mu_1} S_1^* S_1 
\cdots S_1^* S_1 S_{\mu_m}S_1^* S_1  a_3 S_1^* S_1 a_4 \cdots a_k f \\
= & (S_1 f)^* \cdot S_1 a_1  S_1^* \cdot S_1 S_{\mu_1} S_1^* \cdot S_1 
\cdots S_1^* \cdot S_1 S_{\mu_m}S_1^* \cdot S_1 a_3 S_1^* \cdot f S_1 a_4 \cdots a_k f. 
\end{align*}
Hence the word $f a_1  S_{\mu_1}\cdots S_{\mu_m} S_1^* S_1 a_3 S_1^*$ 
is written in words of elements of the form of \eqref{eq:generatotsSnF}.

Since $f S_1 f = f\cdot S_1 f$,
$
f S_1 S_j f = f S_1 S_jS_1^*\cdot  S_1 f
$ 
and
$
f S_1 S_j^* f = f S_1 S_j^* S_1^*\cdot  S_1 f,
$ 
by induction on the length $k$ of the word 
$f a_1 a_2 a_3 \cdots a_k f$, 
we may conclude that 
the word 
$
f a_1 a_2 a_3 \cdots a_k f
$
is written in words of elements of the form of \eqref{eq:generatotsSnF}.
 \end{proof}
\begin{lemma}\label{UniversalhatA3}
The $C^*$-algebra $f \O_{H_\F} f$ is generated by 
\begin{equation}\label{eq:generatotsS1F}
S_1 \F S_1^*, \quad S_1 f, \quad S_1 S_i S_1^*, \qquad  i\in \N.
\end{equation}
\end{lemma}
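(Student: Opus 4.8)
The plan is to deduce the statement from Lemma~\ref{UniversalhatA2} by showing that the smaller family \eqref{eq:generatotsS1F} already captures all of the generators \eqref{eq:generatotsSnF}. Write $\C := C^*(S_1\F S_1^*,\, S_1 f,\, \{S_1 S_i S_1^*\}_{i\in\N})$ for the $C^*$-subalgebra of $\OHF$ generated by \eqref{eq:generatotsS1F}. Since $f S_1 = S_1$ and $S_1^* f = S_1^*$, each listed generator is fixed under compression by $f$, so $\C \subseteq f\OHF f$ trivially. Thus it suffices to prove the reverse inclusion, and by Lemma~\ref{UniversalhatA2} this reduces to checking that the three families $S_n\F S_n^*$, $S_n f$ and $S_n S_i S_n^*$, for arbitrary $n,i\in\N$, all lie in $\C$.

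The crux is to manufacture, inside $\C$, the ``connecting'' partial isometries $S_1 S_n^*$ (and their adjoints $S_n S_1^*$) that bridge the range of $S_1$ with that of $S_n$. Both $S_1 S_n S_1^*$ and $S_1 f$ belong to \eqref{eq:generatotsS1F}, hence to $\C$, and I compute
\[
(S_1 S_n S_1^*)^*\,(S_1 f) = S_1 S_n^* S_1^* S_1 f = S_1 S_n^* f = S_1 S_n^*,
\]
where the second equality uses $S_1^* S_1 = 1_{\OHF}$ and the third uses $S_n^* f = S_n^*$ (a consequence of $\sum_{j=1}^m S_j S_j^* < f$, as recorded in the proof of Lemma~\ref{UniversalhatA2}). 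Hence $S_1 S_n^*\in\C$, and taking adjoints gives $S_n S_1^*\in\C$ for every $n\in\N$.

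Once these connecting elements are available, the remaining generators are produced by telescoping with $S_1^* S_1 = 1_{\OHF}$. Explicitly, for $a\in\F$ and $n,i\in\N$,
\begin{align*}
S_n f &= (S_n S_1^*)\,(S_1 f),\\
S_n a S_n^* &= (S_n S_1^*)\,(S_1 a S_1^*)\,(S_1 S_n^*),\\
S_n S_i S_n^* &= (S_n S_1^*)\,(S_1 S_i S_1^*)\,(S_1 S_n^*).
\end{align*}
In each line every factor on the right belongs to $\C$, so the left-hand side does too. This exhausts the families \eqref{eq:generatotsSnF}, whence $f\OHF f = \C$ by Lemma~\ref{UniversalhatA2}.

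The only genuinely non-formal step is the second paragraph: recognizing that $S_1 f$ is the right ``bridge'' to use. It is the single generator in \eqref{eq:generatotsS1F} not of the conjugated form $S_1(\,\cdot\,)S_1^*$, and multiplying $(S_1 S_n S_1^*)^*$ against it collapses---via the absorption identity $S_n^* f = S_n^*$---to the off-diagonal $S_1 S_n^*$, effecting the shift between the $n$-th and first isometry ranges. Everything after that is routine cancellation using $S_1^* S_1 = 1_{\OHF}$.
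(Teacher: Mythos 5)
Your proposal is correct and follows essentially the same route as the paper: both reduce to Lemma \ref{UniversalhatA2} and then generate the connecting partial isometries $S_n S_1^*$ (in the paper's notation, $S^*R_n$ with $S=S_1 f$ and $R_n=S_1S_nS_1^*$) to telescope $S_n\F S_n^*$, $S_n f$, $S_nS_iS_n^*$ back into the $n=1$ family via $S_1^*S_1=1_{\OHF}$. The computations match the paper's line for line, up to taking adjoints.
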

\begin{proof}
We set 
\begin{align*}
\A_1 := & C^*(S_1 \F S_1, \, \, S_1 f,\, \,  S_1 S_i S_1^* \mid i \in \N), \\ 
\A_* := & C^*(S_n \F S_n, \, \,  S_n f, \, \, S_n S_i S_n^* \mid n, i \in \N)
\end{align*}
and
$
\T: = S_1 \F S_1^*, \, \, S:= S_1 f, \, \, R_i :=S_1 S_i S_1^*, \, i \in \N.
$
By the preceding lemma, we know that 
$f \O_{H_\F} f =\A_*$, and $\A_1 \subset \A_*$ is claer.
We will show $\A_1\supset \A_*$.
For $S_i S_1^*$ with $i \in \N$, we have
\begin{equation*}
S_i S_1^* = f S_i S_1^*
= f S_1^* S_1  S_i S_1^*
=S^*\cdot R_i
\end{equation*}
so that 
$S_i S_1^*  \in \A_1$.
For $n \in \N$ and $a \in \F$, as $S_1^* S_1 =1_{\O_{H_\F}}$,  
we have
\begin{equation*}
S_n a S_n^*
= S_n \cdot S_1^*S_1\cdot  a \cdot S_1^*S_1\cdot  S_n^* 
= S^* R_n \cdot S_1  a  S_1^* \cdot R_n^* S 
\end{equation*}
so that 
$S_n a S_n^* \in \A_1$ for all $n \in \N$,
and similarly 
the identity 
$
S_n S_i S_n^*
=  S^* R_n \cdot R_i \cdot R_n^* S 
$ 
holds so that 
$S_n S_i S_n^* \in \A_1$ for all $n, i \in \N$.
We also have
\begin{equation*}
S_n f
 =  S_n \cdot S_1^*S_1 \cdot f 
=  S^* R_n \cdot S 
\end{equation*}
so that 
$S_n f \in \A_1$ for all $n \in \N$, 
showing that  $\A_* \subset \A_1$ and hence $\A_* = \A_1.$
\end{proof}
Recall that 
the $C^*$-subalgebra  $\T$ of $\OHF$ 
and a family $\{ T_j \}_{j \in \N}$
of partial isometries in $\OHF$ are defined by
\eqref{eq:defT} and \eqref{eq:defTj}, respectively.
Hence we have the following lemma.
\begin{lemma}\label{UniversalhatA4}
The $C^*$-algebra $f \O_{H_\F} f$ is generated by 
$\T$ and $T_j, j \in \N$: 
\begin{equation}\label{eq:generatorsTTj}
f \OHF f = C^*(\T, \{T_j\}_{j\in \N}).
\end{equation}
\end{lemma}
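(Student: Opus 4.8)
The plan is to read the statement off directly from Lemma \ref{UniversalhatA3}, which already exhibits $f\OHF f$ as the $C^*$-algebra generated by $S_1\F S_1^*$, $S_1 f$ and $S_1 S_i S_1^*$, $i\in\N$. Since the definitions \eqref{eq:defT} and \eqref{eq:defTj} give $\T = S_1\F S_1^*$ and $T_1 = S_1 f$, it suffices to prove the equality of generated algebras
\begin{equation*}
C^*(\T, \{T_j\}_{j\in\N}) = C^*(\T, T_1, \{S_1 S_i S_1^*\}_{i\in\N}),
\end{equation*}
for the right-hand side is $f\OHF f$ by Lemma \ref{UniversalhatA3}. Both algebras share the generators $\T$ and $T_1$, so the whole argument reduces to expressing each remaining family in terms of the other. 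Throughout I would use the relation $S_n S_n^* < f$ from \eqref{eq:UniversalAhat1}, which yields the one-sided identities $f S_n = S_n$ and $S_n^* f = S_n^*$; keeping track of the fact that these are genuinely one-sided (so that $S_n f \ne S_n$ in general) is the book-keeping that makes the computations work.

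For the inclusion $\subseteq$ I must place each $T_j$ with $j\ge 2$ inside the right-hand algebra. Using $T_1^* = f S_1^*$ together with $S_1^* S_1 = 1_{\OHF}$ and $f S_j = S_j$, I would compute
\begin{equation*}
T_1^*\,(S_1 S_j S_1^*) = f S_1^* S_1 S_j S_1^* = f S_j S_1^* = S_j S_1^* = T_j,
\end{equation*}
so that every $T_j$ lies in $C^*(\T, T_1, \{S_1 S_i S_1^*\})$.

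For the reverse inclusion I must realize each extra generator $S_1 S_i S_1^*$ inside $C^*(\T, \{T_j\})$. When $i\ge 2$ the identity $T_1 T_i = S_1 f\, S_i S_1^* = S_1 (f S_i) S_1^* = S_1 S_i S_1^*$ does this at once. The one case that does not fit this pattern, and which I expect to be the only point needing care, is $i=1$: here $T_1 T_1 = S_1(f S_1)f = S_1 S_1 f \ne S_1 S_1 S_1^*$, the inequality holding because $S_1 S_1^* < f$ strictly, so $T_1 T_1$ is useless. Instead I would observe that $1_\T = S_1 1_\F S_1^* = S_1 S_1^*$ already lies in $\T$, and write $S_1 S_1 S_1^* = (S_1 f)(S_1 S_1^*) = S_1(f S_1)S_1^* = T_1\, 1_\T$, again using $f S_1 = S_1$. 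Since $\T$ and $T_1$ belong to both algebras, these identities establish the equality of the two generated algebras, and combining with Lemma \ref{UniversalhatA3} gives \eqref{eq:generatorsTTj}.
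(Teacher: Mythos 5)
Your proposal is correct and follows essentially the same route as the paper: both reduce to Lemma \ref{UniversalhatA3} and then translate between the generating families via $T_j=(S_1f)^*S_1S_jS_1^*$ and $S_1S_iS_1^*=T_1T_i$ for $i\ge 2$. The only (immaterial) difference is the $i=1$ case, where you write $S_1S_1S_1^*=T_1\,1_\T$ while the paper uses $S_1S_1S_1^*=T_1T_2^*T_2$; both identities are valid and your version is marginally cleaner.
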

\begin{proof}
We have
$
S_1 S_j S_1^* = S_1 f S_j S_1^* = T_1 T_j
$
for $j \ge 2$, 
and 
\begin{equation*}
S_1 S_1 S_1^* = S_1 f S_1 S_1^* =S_1 f S_1 S_2^*  S_2  S_1^* = T_1 T_2^*T_2
\quad\text{ for } j = 1.
\end{equation*}
On the other hand,  we have
$
T_j  
=(S_1 f)^* S_1 S_j S_1^*
$
for $j \ge 2$.
By Lemma \ref{UniversalhatA3}, we conclude that 
$
f \OHF f = C^*(\T, \{T_j\}_{j \in \N}).
$
\end{proof}  
Recall that  
a projection $e \in \T$ is defined by $e= S_1 f S_1^*\in \T$. 
\begin{lemma}\label{UniversalhatA5}
Assume that there exist a degenerate representation
$\pi: \T \rightarrow \B(\fH)$ 
and a family $t_j \in \B(\fH), j \in \N$
of partial isometries on a Hilbert space $\fH$ satisfying
\begin{equation*}
\begin{cases}
& (t1) \quad t_1^* t_1 = 1_{\B(\fH)}, \qquad
      t_j ^* t_j = \pi(1_\T) \quad  \text{ for } j\ge 2,
 \\
& (t2) \quad t_1 t_1^* = \pi(e), \qquad
\pi(1_\T) + \sum_{j=2}^m t_j t_j^* < 1_{\B(\fH)} \, \text{ for } 2 \le m \in \N.
\end{cases}
\end{equation*}
Then  there exist a Hilbert space 
$\tfH = \fH\oplus \fK$ for some Hilbert space $\fK$ and 
an isometry $v \in \B(\tfH)$ such that 
\begin{equation*}
\begin{cases}
& (t3) \quad t_1 t_1^* = v t_1^*, \qquad t_1^* t_1 = v^* t_1, \\
& (t4) \quad  v v^* = \pi(1_\T), \qquad v^* v = 1_{\B(\tfH)}.
\end{cases}
\end{equation*}
where $\B(\fH) = \B(\fH\oplus 0) \subset \B(\tfH)$.
\end{lemma}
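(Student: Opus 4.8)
The plan is to construct the required isometry $v$ by hand, viewing it as a completion of the isometry $t_1$ to a larger Hilbert space. First I would abbreviate the two projections occurring in the hypotheses by $p := \pi(1_\T)$ and $q := \pi(e)$ in $\B(\fH)$. Since $e \le 1_\T$ in $\T$ and $\pi$ is a $*$-homomorphism, we have $q \le p$, while $p < 1_{\B(\fH)}$ because $\pi$ is degenerate (equivalently, (t2) is already inconsistent when $p = 1_{\B(\fH)}$, as its left-hand side would then exceed $1_{\B(\fH)}$). The relations $t_1^* t_1 = 1_{\B(\fH)}$ and $t_1 t_1^* = q$ say precisely that $t_1$ is an isometry of $\fH$ onto the subspace $q\fH$; in particular $\dim q\fH = \dim\fH$, which together with $q < 1_{\B(\fH)}$ forces $\fH$ to be infinite-dimensional.

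Next I would supply the extra room. Let $\fK$ be a Hilbert space with $\dim\fK = \dim (p-q)\fH$ and fix a unitary $w : \fK \to (p-q)\fH$ (concretely one may take $\fK = (p-q)\fH$ with $w$ the identity). Set $\tfH = \fH\oplus\fK$ and regard $\B(\fH) = \B(\fH\oplus 0)\subset\B(\tfH)$ as in the statement. Guided by the concrete model $V = S_1$, $T_1 = S_1 f = Vf$ of Lemma~\ref{lem:relationsTjeV} — in which (t3) forces $v$ to restrict to $t_1$ on $\fH$ — I would define $v\in\B(\tfH)$ by
\[
v(\xi\oplus\kappa) = t_1\xi + w\kappa, \qquad \xi\in\fH,\ \kappa\in\fK,
\]
so that $v$ sends $\tfH$ onto the orthogonal sum $q\fH \oplus (p-q)\fH = p\fH$.

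It then remains to verify (t3) and (t4) by direct computation. For (t4): since the ranges $q\fH$ and $(p-q)\fH$ are orthogonal and $t_1^* t_1 = 1_{\B(\fH)}$, $w^* w = 1_\fK$, expanding $\langle v(\xi\oplus\kappa),\, v(\xi'\oplus\kappa')\rangle$ collapses to $\langle\xi,\xi'\rangle + \langle\kappa,\kappa'\rangle$, whence $v^* v = 1_{\B(\tfH)}$; being an isometry, $v$ has $vv^*$ equal to the projection onto its range $p\fH$, i.e.\ $vv^* = p = \pi(1_\T)$. For (t3): for $\eta\in\fH$ one has $v(t_1^*\eta) = t_1 t_1^*\eta = q\eta$ (the $\fK$-component of $t_1^*\eta$ being zero), so $v t_1^* = q = \pi(e) = t_1 t_1^*$; and from $v(\xi\oplus 0) = t_1\xi$ together with $v^* v = 1_{\B(\tfH)}$ one gets $v^*(t_1\xi) = \xi$, giving $v^* t_1 = 1_{\B(\fH)} = t_1^* t_1$.

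There is no deep obstruction here — the content is just this explicit dilation — so the only points needing care are the bookkeeping that makes the construction consistent and the adjoint computation in the last identity. The dimension count $\dim p\fH = \dim q\fH + \dim(p-q)\fH = \dim\fH + \dim\fK$ is compatible with $p\fH\subseteq\fH$ exactly because $\fH$ is infinite-dimensional, so that $\dim q\fH = \dim\fH$ while $(p-q)\fH$ may still be nonzero. I expect the second identity in (t3), $v^* t_1 = t_1^* t_1$, to be the subtlest step, since it is the one place where the passage from $\fH$ to the enlarged space $\tfH$ is genuinely used and where one must read the identity on $\fH$ as an element of $\B(\tfH)$ under the identification $\B(\fH)\subset\B(\tfH)$.
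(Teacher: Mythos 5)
Your proof is correct and is essentially the paper's own argument: the paper likewise takes $\fK=\pi(1_\T-e)\fH$ and defines $v$ on $\fH\oplus\fK$ by $v(h\oplus k)=t_1h+\pi(1_\T-e)k$ (written there as a $2\times 2$ operator matrix), verifying (t3) and (t4) from the same orthogonality $\pi(1_\T-e)t_1=0$ that you phrase as the orthogonality of $q\fH$ and $(p-q)\fH$. The dimension-count remarks are harmless but not needed; nothing further is required.
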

\begin{proof}
Put $\fK : = \pi(1_\T - e)\fH$
and
$\tfH = \fH \oplus \fK$.
Represent $C^*(\pi(\T), \{ t_j \}_{j \in \N})$ 
on $\tfH$ by 
$$
x (h, k) := (x(h), 0) \quad 
\text{ for } x \in C^*(\pi(\T), \{t_j\}_{j \in \N}),\;\; (h, k)\in\fH\oplus\fK.
$$
Hence $x \in C^*(\pi(\T), \{t_j\}_{j \in \N})$ 
is written as
$x =
\begin{bmatrix}
x & 0 \\
0 & 0
\end{bmatrix}
\text{ on } \fH \oplus \fK.
$
Define 
\begin{equation*}
v :=
\begin{bmatrix}
t_1 & \pi(1_\T -e) \\
0 & 0
\end{bmatrix}
\text{ on } \fH \oplus \fK.
\end{equation*}
Since 
$$
\pi(1_\T -e) t_1 =\pi(1_\T -e) t_1t_1^* t_1 = \pi(1_\T -e) \pi(e) t_1 =0,
$$
it is direct to see that 
$v$ is an isometry on $\tfH$ satisfying $ (t3)$ and $(t4)$.
\end{proof}
\begin{remark}\label{rem:faithfulness2}
The representation $\pi:\T\rightarrow \B(\fH)$
with the partial isometries $\{t_j \}_{j \in \N}$
satisfying $(t1)$ and $(t2)$
automatically becomes faithful, because 
$\T$ contains a $C^*$-subalgebra $\calK$ as an essential ideal,
and $\pi(e)$ for the minimal projection $e$ in $\calK$
does not vanish by the relations 
$t_1 t_1^*= \pi(e)$ and $t_1^* t_1 = 1_{\B(\fH)}.$ 
\end{remark}
\begin{proposition}\label{prop:UniversalhatA6}
Keep the assumption of Lemma \ref{UniversalhatA5}.
Take the Hilbert space $\tfH$ and the isometry $v$ on $\tfH$
satisfying (t3) and (t4) in Lemma \ref{UniversalhatA5}.
Put
the partial isometries 
$\tilde{s}_j, \, j \in \N$ on $\tfH$
and
a representation $\tilde{\pi}: \F \rightarrow \B(\tfH)$
by setting 
\begin{equation}
\tilde{s}_j
:= \begin{cases}
v^* t_1 v & \text{ for j=1}\\ 
 v^* t_1 t_j v & \text{ for } j \ge 2,
 \end{cases}
\quad  \text{ and } \quad
\tilde{\pi}(a) : = v^* \pi(S_1a S_1^*)v
\quad \text{ for } a \in \F.
\end{equation}
Then we have
\begin{enumerate}
\renewcommand{\theenumi}{(\roman{enumi})}
\renewcommand{\labelenumi}{\textup{\theenumi}}
\item
$\tilde{\pi}: \F \rightarrow \B(\tfH)$
is a unital representation 
such that 
\begin{equation*}
\tilde{s}_i^* \tilde{s}_i = 1_{\B(\tfH)} \quad \text{ for } i \in \N \quad
\text{ and }
\quad
\sum_{j=1}^m \tilde{s}_j \tilde{s}_j< \tilde{\pi}(f) 
 \quad \text{ for } m \in \N.
\end{equation*}
\item
There exists an injective $*$-homomorphism
$\Phi: \OHF\rightarrow \B(\tfH)$ such that 
{
\begin{enumerate}
\renewcommand{\theenumi}{(\arabic{enumi})}
\renewcommand{\labelenumi}{\textup{\theenumi}}
\item 
$
\Phi(S_i) = \tilde{s}_i
\text{  for } 
 i \in \N
$
  and 
$
 \Phi(a) = \tilde{\pi}(a) \text{ for } a \in \F,
$ 
and hence 
\begin{equation*}
\OHF \cong C^*(\tilde{\pi}(\F), \{\tilde{s}_j\}_{j\in \N})
\end{equation*}
\item $\Phi(f \OHF f) = C^*(\pi(\T), \{ t_j \}_{j \in \N})$ and hence
\begin{equation*}
f \OHF f \cong C^*(\pi(\T), \{ t_j \}_{j \in \N}).
\end{equation*}
\end{enumerate}
}
\end{enumerate}
\end{proposition}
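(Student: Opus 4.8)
The plan is to deduce part (ii)(1) directly from the universal property already recorded in Proposition \ref{prop:universalityforOHF}, applied to the unital $C^*$-algebra $\B(\tfH)$ together with the pair $(\tilde{\pi},\{\tilde{s}_j\}_{j\in\N})$. Once part (i) is in hand---i.e.\ once $\tilde{\pi}$ is shown to be a unital representation of $\F$ and the $\tilde{s}_j$ are shown to be isometries satisfying $\tilde{s}_i^*\tilde{s}_i=1_{\B(\tfH)}$ and $\sum_{j=1}^m\tilde{s}_j\tilde{s}_j^*<\tilde{\pi}(f)$---that proposition furnishes a surjective $*$-homomorphism $\Phi\colon\OHF\to C^*(\tilde{\pi}(\F),\{\tilde{s}_j\})$ with $\Phi(S_i)=\tilde{s}_i$ and $\Phi(a)=\tilde{\pi}(a)$, and injectivity is then automatic since $\OHF$ is simple. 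So the weight of the argument rests on verifying the hypotheses (i).

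For (i) the first observation is structural: since $S_1^*S_1=1_\F$, the map $a\mapsto S_1aS_1^*$ is a unital $*$-homomorphism $\F\to\T=S_1\F S_1^*$, and by (t4) the map $\Ad(v^*)\colon x\mapsto v^*xv$ restricts to a unital $*$-isomorphism of the corner $\pi(1_\T)\B(\tfH)\pi(1_\T)$ onto $\B(\tfH)$ (using $v^*v=1_{\B(\tfH)}$ and $vv^*=\pi(1_\T)$). As $\pi(\T)$ lies in that corner, $\tilde{\pi}=\Ad(v^*)\circ\pi\circ(S_1\,\cdot\,S_1^*)$ is a unital representation of $\F$. I would then record two simplifying identities that make the remaining relations transparent: from $\pi(e)t_1=t_1$ and $\pi(1_\T)t_1=t_1$ (forced by (t1) and (t2), the latter via $\pi(1_\T)\pi(e)=\pi(e)$), a short computation on $\tfH=\fH\oplus\fK$ gives
\[
\tilde{s}_1=v \qquad\text{and}\qquad \tilde{\pi}(f)=P_\fH,
\]
where $P_\fH$ is the projection of $\tfH$ onto $\fH$. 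The isometry relations $\tilde{s}_i^*\tilde{s}_i=1_{\B(\tfH)}$ then follow from (t1) and (t4); in particular $\tilde{s}_1=v$ is an isometry, and for $j\ge2$ one uses $t_1^*t_1\,t_j=t_j$ together with $t_j^*t_j=\pi(1_\T)=vv^*$.

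For the range projections I would compute, for $j\ge2$, that $\tilde{s}_j\tilde{s}_j^*=t_jt_j^*$ (the cut-downs by $v$ and $v^*$ cancel once one invokes $t_j\pi(1_\T)=t_j$ and $\pi(1_\T)t_j^*=t_j^*$, both consequences of $t_j^*t_j=\pi(1_\T)$), while $\tilde{s}_1\tilde{s}_1^*=vv^*=\pi(1_\T)$. Hence, reading inside the $\fH$-corner of $\B(\tfH)$,
\[
\sum_{j=1}^m\tilde{s}_j\tilde{s}_j^*=\pi(1_\T)+\sum_{j=2}^m t_jt_j^*<1_{\B(\fH)}=\tilde{\pi}(f),
\]
which is exactly (t2). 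This finishes (i), and with it (ii)(1). For (ii)(2) I would use $f\OHF f=C^*(\T,\{T_j\}_{j\in\N})$ from Lemma \ref{UniversalhatA4} and evaluate $\Phi$ on these generators: since $\Phi(S_1)=\tilde{s}_1=v$, one gets $\Phi(S_1aS_1^*)=v\tilde{\pi}(a)v^*=vv^*\pi(S_1aS_1^*)vv^*=\pi(S_1aS_1^*)$, so $\Phi(\T)=\pi(\T)$; and $\Phi(T_1)=\Phi(S_1f)=v\tilde{\pi}(f)=t_1$, while $\Phi(T_j)=\Phi(S_jS_1^*)=\tilde{s}_j\tilde{s}_1^*=t_j$ for $j\ge2$, each by the same cancellations. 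Therefore $\Phi(f\OHF f)=C^*(\pi(\T),\{t_j\})$, giving the stated isomorphism. The main obstacle is purely the bookkeeping on the enlarged space $\tfH$: one must keep straight how conjugation by the isometry $v$ interacts with the partial-isometry identities for $t_1$ and the $t_j$, so that the inflated products $\tilde{s}_j\tilde{s}_j^*$ collapse back to the original $t_jt_j^*$ and $\tilde{\pi}(f)$ is correctly identified with $P_\fH$; once these identities are secured, every remaining step is formal.
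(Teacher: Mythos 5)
Your proposal is correct and follows essentially the same route as the paper: verify the hypotheses of Proposition \ref{prop:universalityforOHF} for the pair $(\tilde{\pi},\{\tilde{s}_j\})$ by direct computation from (t1)--(t4), invoke that proposition together with simplicity of $\OHF$ for (ii)(1), and then identify $\Phi$ on the generators of the corner via the identities $\tilde{s}_1\tilde{\pi}(a)\tilde{s}_1^{*}=\pi(S_1aS_1^{*})$, $\tilde{s}_1\tilde{\pi}(f)=t_1$, $\tilde{s}_j\tilde{s}_1^{*}=t_j$ for (ii)(2). Your preliminary observations $\tilde{s}_1=v$ and $\tilde{\pi}(f)=P_{\fH}$, and your use of Lemma \ref{UniversalhatA4} on the domain side rather than reproving the corner-generation statement for the image algebra, are minor streamlinings of the same argument.
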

\begin{proof}
(i) The equalities 
$$
\tilde{\pi}(1_\F) 
= v^* \pi(S_1 1_\F S_1^*) v 
= v^*\pi(1_\T) v 
= v^* v v^* v = 1_{\B(\tfH)}
$$
show that  $\pi : \F \rightarrow \B(\tfH)$ is  unital. 
Since $v^* t_1 = t_1^* t_1 =  1_{\B(\fH)}$ and hence $ t_1^* v = 1_{\B(\fH)}$, 
we have 
\begin{gather*}
\tilde{s}_1^* \tilde{s}_1 
=  v^* t_1^* v v^* t_1 v
=  v^* 1_{\B(\fH)}\cdot  v 
=  v^* v = 1_{\B(\tfH)}, \quad \text{ and } \\ 
\tilde{s}_j^* \tilde{s}_j 
=  v^* t_j^* t_1^* v v^* t_1 t_j v
=  v^* t_j^* 1_{\B(\fH)}  t_j v 
=  v^* \pi(1_\T) v
= 1_{\B(\tfH)} \quad \text{ for } j\ge 2.
\end{gather*}
We also have
\begin{gather*}
\tilde{s}_1 \tilde{s}_1^* 
=  v^* t_1 v v^* t_1^* v
=  v^* t_1 \pi(1_\T) t_1^* v, \quad \text{ and } \\
\tilde{s}_j \tilde{s}_j^* 
=  v^* t_1 t_j \pi(1_\T) t_j^* t_1^* v 
=  v^* t_1 t_j t_j^*  t_1^* v
\quad \text{ for } j\ge 2
\end{gather*}
so that 
\begin{equation*}
\tilde{s}_1 \tilde{s}_1^* 
+ \sum_{j=2}^m \tilde{s}_j \tilde{s}_j^* 
=  v^* t_1 \left( \pi(1_\T) + \sum_{j=2}^m t_j t_j^* \right)  t_1^* v 
<  v^* t_1 1_{\B(\fH)} t_1^* v 
= v^* \pi(e) v = \tilde{\pi}(f).
\end{equation*}
(ii) 
Since the representation $\tilde{\pi}: \F \rightarrow \B(\tfH)$
and the isometries $\{\tilde{s}_j \}_{j \in \N}$ satisfy 
\eqref{eq:sjB},
Proposition \ref{prop:universalityforOHF} tells us that
there exists an injective $*$-homomorphism
$\Phi: \OHF\rightarrow \B(\tfH)$ satisfying  
$
\Phi(S_i) = \tilde{s}_i, \, i \in \N
$
and 
$
 \Phi(a) = \tilde{\pi}(a), \, a \in \F,
$
which gives rise to an isomorphism 
from 
$
\OHF 
$ to
$C^*(\tilde{\pi}(\F), \{\tilde{s}_j\}_{j\in \N}).
$

We will next prove that 
$\Phi(f \OHF f) = C^*(\pi(\T), \{ t_j \}_{j \in \N})$.
Put
$\tilde{f}: = \tilde{\pi}(f).$
By a completely similar manner to the proofs of Lemma \ref{UniversalhatA2}
and  Lemma \ref{UniversalhatA3},
one knows that 
\begin{equation}\label{eq:tildeftildepi}
\tilde{f}\, C^*(\tilde{\pi}(\F), \{\tilde{s}_j\}_{j\in \N})\, \tilde{f}
= C^*(\tilde{s}_1 \tilde{\pi}(\F) \tilde{s}^*_1, 
\tilde{s}_1 \tilde{f},  \{\tilde{s}_1 \tilde{s}_j\tilde{s}_1^* \}_{j \in \N}).
\end{equation}
So by using a similar manner to Lemma \ref{UniversalhatA4}, 
it suffices to prove that $\tilde{s}_1  \tilde{\pi}(\F) \tilde{s}_1^* = \pi(\T)$,
 $\tilde{s}_1 \tilde{f} = t_1$ and
$\tilde{s}_j \tilde{s}_1^* =t_j$ for $j \ge 2.$
Since 
$
\tilde{s}_1 v^* = v^* t_1 vv^* = t_1^* t_1 \pi(1_\T) = \pi(1_\T),
$
we have 
$$
\tilde{s}_1  \tilde{\pi}(\F) \tilde{s}_1^* 
=\tilde{s}_1 v^* \pi(\T) v  \tilde{s}_1^* = \pi(\T).
$$ 
We also  have
\begin{gather*}
\tilde{s}_1 \tilde{f} 
=  v^* t_1 \pi(e) v
=  t_1^* t_1\cdot t_1 t_1^*  v
=  1_{\B(\fH)}\cdot t_1 t_1^*  t_1
=t_1, \quad \text{ and } \\
\tilde{s}_j \tilde{s}_1^*
=  v^* t_1 t_j v v^* t_1^* v
=  1_{\B(\fH)} t_j \pi(1_\T) 1_{\B(\fH)}
=t_j \quad \text{ for } j\ge 2.
\end{gather*}  
By \eqref{eq:tildeftildepi}, we have
\begin{equation}\label{eq:tildeftildepi}
\tilde{f}\, C^*(\tilde{\pi}(\F), \{\tilde{s}_j\}_{j\in \N})\, \tilde{f}
= C^*(\pi(\T),\{ t_j \}_{j \in \N}).
\end{equation}
As
$\Phi(f \OHF f ) = \tilde{f}\, C^*(\tilde{\pi}(\F), \{\tilde{s}_j\}_{j\in \N})\, \tilde{f}$,
we have an isomorphism
 $f \OHF f \cong C^*(\pi(\T),\{ t_j \}_{j \in \N}).$
\end{proof}
Recall that the unital $C^*$-algebra $\T$ is defined by $S_1 \F S_1^*$  as a subalgebra of $\OHF$
and a projection $e \in \T$ is defined by $e= S_1 f S_1^*\in \calK(H)$. 
\begin{corollary}\label{cor:maincoro}
Suppose that there exist a representation
$\pi: \T \rightarrow \B(\fH)$ 
and a family $t_j \in \B(\fH), j \in \N$
of partial isometries on $\fH$ satisfying
\begin{equation*}
\begin{cases}
& (t1) \quad t_1^* t_1 = 1_{\B(\fH)}, \qquad
      t_j ^* t_j = \pi(1_\T) \quad  \text{ for } j\ge 2,
 \\
& (t2) \quad t_1 t_1^* = \pi(e), \qquad
\pi(1_\T) + \sum_{j=2}^m t_j t_j^* < 1_{\B(\fH)} \, \text{ for } 2 \le m \in \N.
\end{cases}
\end{equation*}
Then there exists an isomorphism
$\Phi: f \OHF f \rightarrow C^*(\pi(\T), \{ t_j \}_{j \in \N})$.
\end{corollary}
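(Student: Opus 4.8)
The plan is to recognize that the corollary is simply the conclusion of Proposition~\ref{prop:UniversalhatA6}(ii)(2), once one checks that its standing hypotheses are met. The only point that is not completely verbatim is the adjective ``degenerate'': Lemma~\ref{UniversalhatA5} and Proposition~\ref{prop:UniversalhatA6} assume a \emph{degenerate} representation $\pi$, whereas here $\pi$ is merely ``a representation.'' I would first observe that degeneracy is automatic under the relations. Indeed, taking $m=2$ in the second relation of (t2) gives $\pi(1_\T)\le \pi(1_\T)+t_2t_2^*<1_{\B(\fH)}$, whence $\pi(1_\T)\ne 1_{\B(\fH)}$; thus $\pi$ is degenerate and the hypotheses of Lemma~\ref{UniversalhatA5} hold as stated.

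With this in hand I would assemble the conclusion in three steps. First, apply Lemma~\ref{UniversalhatA5} to manufacture the enlarged Hilbert space $\tfH=\fH\oplus\fK$ with $\fK=\pi(1_\T-e)\fH$ and the isometry $v\in\B(\tfH)$ satisfying (t3) and (t4). Second, feed the data $(\pi,\{t_j\}_{j\in\N},v)$ into Proposition~\ref{prop:UniversalhatA6}: part~(i) produces the unital representation $\tilde\pi$ of $\F$, given by $\tilde\pi(a)=v^*\pi(S_1 a S_1^*)v$ for $a\in\F$, together with isometries $\tilde s_j$ obeying \eqref{eq:sjB}, and part~(ii) then yields an injective $*$-homomorphism $\Phi:\OHF\to\B(\tfH)$ with $\Phi(f\OHF f)=C^*(\pi(\T),\{t_j\}_{j\in\N})$. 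Third, since $\Phi$ is injective (forced by simplicity of $\OHF$), its restriction to the corner $f\OHF f$ is an isomorphism onto $C^*(\pi(\T),\{t_j\}_{j\in\N})$, which is precisely the asserted map.

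I do not expect a genuine obstacle, since all the analytic content lives in the earlier lemma and proposition; the corollary is an assembly step. The one bookkeeping subtlety worth flagging is that $\Phi$ is constructed on the enlarged space $\tfH$, yet its image on the corner must be seen to be generated by the \emph{original} data: this is guaranteed by the identities $\tilde s_1\tilde\pi(\F)\tilde s_1^*=\pi(\T)$, $\tilde s_1\tilde f=t_1$ and $\tilde s_j\tilde s_1^*=t_j$ for $j\ge 2$, established inside the proof of Proposition~\ref{prop:UniversalhatA6}. Since $\pi(\T)$ and all the $t_j$ already act on $\fH\subset\tfH$, the target $C^*(\pi(\T),\{t_j\}_{j\in\N})$ survives as a subalgebra of $\B(\fH)$, so no spurious enlargement of the Hilbert space appears in the final isomorphism.
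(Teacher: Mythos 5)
Your proposal is correct and follows essentially the same route as the paper, which states the corollary as an immediate consequence of Lemma \ref{UniversalhatA5} and Proposition \ref{prop:UniversalhatA6} without a separate argument. Your additional observation that degeneracy of $\pi$ is forced by the strict inequality in (t2) (so the hypothesis of Lemma \ref{UniversalhatA5} is automatically satisfied) is a valid and worthwhile clarification, but it does not change the substance of the argument.
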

 Now we are in position to give a proof of Theorem \ref{thm:mainuniversalA3}.

 \begin{proof}[Proof of Theorem \ref{thm:mainuniversalA3}] 
 (i)
Let $\A$ be a unital Kirchberg algebra with finitely generated $\K$-groups.
One may take a separable unital nuclear UCT $C^*$-algebra $\F$ satisfying \eqref{eq:KFExtsA}
as in Section \ref{sect:Reviewreciprocal}.
 By \eqref{eq:constructionAhat} and Lemma \ref{UniversalhatA4}, 
the reciprocal dual $\whatA$ is generated by the unital subalgebra $\T$ and 
 partial isometries $\{ T_j \}_{j \in \N}$ 
 satisfying the relations in Lemma \ref{lem:relationsTjeV} (i).
As $\T = S_1 \F S_1$ and $e = S_1 f S_1^*$ with $S_1^* S_1 = 1_\F$, 
we have
\begin{equation}\label{eq:KFKT}
(\K_0(\F), [f]_0, \K_1(\F)) \cong
(\K_0(\T), [e]_0, \K_1(\T)),
\end{equation} 
showing \eqref{eq:1.1}. 
 In the relations (T2) of Lemma \ref{lem:relationsTjeV} (i),
 $T_1^* T_1 = f$ because $T_1 = S_1 f$,
 which is the unit of the algebra $f \OHF f$ and written as $1_{\whatA}$.
 Hence the relations (T1), (T2) in Lemma \ref{lem:relationsTjeV} (i) are the ones 
 appeared in \eqref{eq:main}. 
 This shows that the $C^*$-algebra $\whatA$ is the $C^*$-algebra satisfying the conditions 
 (a), (b) of Theorem \ref{thm:mainuniversalA3} (i).
 Its universality and uniqueness with respect to the subalgebra $\T$ and relations \eqref{eq:main} 
 follow from Corollary \ref{cor:maincoro}.
 
 (ii) Since 
 \begin{equation*}
 (\Exts^1(\A), [\iota_\A(1)]_s, \Exts^0(\A)) \cong
(\K_0(\whatA), [1_{\whatA}]_0, \K_1(\whatA)),
\end{equation*}
together with \eqref{eq:KFKT},
 we have
\begin{equation*}
(\K_0(\T), [e]_0, \K_1(\T)) \cong
(\K_0(\whatA), [1_{\whatA}]_0, \K_1(\whatA)).
\end{equation*} 
 because of \eqref{eq:KFExtsA}.
As the $C^*$-algebra $\whatA$ is a Kirchberg algebra, 
 its isomorphism class depends only on its 
 $K$-theory date with the position of the class of the unit in $\K_0$-group
 (\cite{Kirchberg}, \cite{Phillips}),
 the isomorphism class of $\whatA$ does not depend on the choice of the unital $C^*$-algebra $\T$
 as long as it satisfies \eqref{eq:1.1}. 
  \end{proof}  
  
\begin{remark}
If a Kirchberg algebra $\A$ is a simple Cuntz--Krieger algebra $\OA$,
one may take the unital $C^*$-algebra $\T$ as the Toeplitz algebra
$\T_{A^t}$ for the transposed matrix $A^t$. 
Then the universality of the reciprocal dual $\whatOA$ in Theorem \ref{thm:mainuniversalA3}
is nothing but the universality stating as in \cite[Theorem 1.2]{MatSogabe3}.
The detail will be discussed in Section \ref{sec:Examples}.
\end{remark}

\section{Ergodic automorphisms on $\whatA$ in terms of the generators}\label{sec:ergodicaut}
In \cite{MatSogabe4}, it was proved that there exists 
an aperiodic ergodic automorphism of an arbitrary unital Kirchberg algebra.
In this section, we will concretely realize the ergodic automorphism on the Kirchberg algebra with finitely generated $\K$-groups
in terms of their generators 
in the universal representation stated in Theorem \ref{thm:mainuniversalA3}.
It induces a concrete description of an ergodic automorphism of the Cuntz algebra $\O_2$, which will 
be stated in Section \ref{sec:Examples}.

As in Theorem \ref{thm:universalityforOHF}, 
let us represent $\OHF$ as the universal $C^*$-algebra 
$C^*_{\text{univ}}(\F, \{ S_n \}_{n \in \Z})$ with relations:
\begin{equation*}
S_n^* S_n = 1  \text{ for } n \in \Z \quad 
\text{ and } \quad
\sum_{|n| < m} S_n S_n^* < f \quad \text{ for any } m \in \N
\end{equation*}
where $f \in \calK(H) \subset \F$ is a fixed minimal projetion.
We note that in the above description the index set of the isometries $\{S_n \}_{n \in \Z}$ 
is the integer group $\Z$ instead of $\N$ and $S_0$ plays a role of $S_1$ in the previous section.
By Theorem \ref{thm:mainuniversalA3},  the reciprocal dual $\whatA$ of $\A$ 
is given by the corner $f \OHF f$ of $\OHF$.
Put
\begin{equation*}
\T: = S_0 \F S_0, \quad 
t_0 = S_0 f, \quad
e:= S_0 f S_0^*, \quad
\quad \text{ and }
\quad
t_n := 
S_n S_0^*  \text{ for } n \in \Z \text{ with } n \ne 0.
\end{equation*}
We have shown that 
$\whatA$ is realized as the universal $C^*$-algebra 
$C^*_{\text{univ}}(\T, \{t_n\}_{n \in \Z})$ 
satisfying  relations \eqref{eq:main}
such that 
 \begin{equation}\label{eq:univZ}
 \begin{split}
(1)\quad &  t_0^* t_0 =  1_{\widehat{\A}}, \qquad
            t_j ^* t_j = 1_\T \quad \text{ for } j \in \Z \text{ with } j \ne 0 \\
(2)\quad & t_0 t_0^* =e, \qquad  
1_\T  + \sum_{0<|j| \le m} t_j t_j^* < 1_{\widehat{\A}} \quad \text{ for } m \in \N, 
\end{split} 
\end{equation}
Define an automorphism $\theta$ 
on the $C^*$-algebra 
$C^*_{\text{univ}}(\F, \{ S_n \}_{n \in \Z})$ 
by
\begin{equation}\label{eq:autthetaOHF}
\begin{cases}
\theta(a) = a & \text{ for } a \in \F,\\
\theta(S_n) = S_{n+1} & \text{ for } n \in \Z.
\end{cases}
\end{equation}
By the universality of the $C^*$-algebra
$\OHF$, 
$\theta$ gives rise to an automorphism on $\OHF$.
We know the following lemma proved in
\cite[Lemma 3.2 and Lemma 3.3]{MatSogabe4}.
\begin{lemma}[{\cite[Lemma 3.2 and Lemma 3.3]{MatSogabe4}}] \label{lem:5.1}
The automorphism
$\theta^n$ on $\OHF$ for each $n \in \Z$ with $n\ne 0$
is outer, and the fixed point algebra $(\OHF)^\theta$ 
of $\OHF$ under $\theta$ is $\F$.
\end{lemma}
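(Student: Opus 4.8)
The plan is to prove the statement about the fixed point algebra first, since the outerness of $\theta^n$ will then follow from a short argument using simplicity of $\OHF$. Throughout I would use the universal description $\OHF=C^*_{\text{univ}}(\F,\{S_n\}_{n\in\Z})$ of Theorem \ref{thm:universalityforOHF}, together with the two structural facts recorded in Section \ref{sect:Reviewreciprocal}: the algebra $\OHF$ is simple, and $\varphi(\F)\cap\calK(H_\F)=\{0\}$, so that $\OHF$ coincides with the Toeplitz algebra $\T_{H_\F}$ acting on the Fock space $\E_\F$. The first tool I would introduce is the gauge action $\gamma\colon\mathbb{T}\to\Aut(\OHF)$ determined by $\gamma_z(a)=a$ for $a\in\F$ and $\gamma_z(S_n)=zS_n$; this exists by the universal property, since $\{zS_n\}_n$ satisfies the same relations as $\{S_n\}_n$ over the same $\F$. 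Averaging over $\gamma$ gives a faithful conditional expectation $E\colon\OHF\to\OHF^\gamma$ onto the \emph{core} (the fixed point algebra of $\gamma$), and because $\theta\gamma_z=\gamma_z\theta$ for all $z$, the map $E$ is $\theta$-equivariant and $\theta$ preserves every spectral subspace $\OHF^{(d)}=\{x:\gamma_z(x)=z^dx\}$, $d\in\Z$.

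For $x\in(\OHF)^\theta$ I would write $x=\sum_d x_d$ with $x_d=$ its gauge-degree-$d$ component; each $x_d$ is again $\theta$-fixed. For $d\ne0$ I would kill $x_d$ by a shift-to-infinity argument: approximate $x_d$ within $\epsilon$ by a finite combination $y$ of words $S_\mu a S_\nu^*$ with $|\mu|-|\nu|=d$ and all indices in $[-N,N]$. Taking $d>0$, every $\mu$ is nonempty, and for $M>2N$ the indices of $\theta^M(y)$ lie in $[M-N,M+N]$, disjoint from those of $y$, so $S_{\mu'+M}^*S_\mu=0$ for all relevant words and hence $\theta^M(y)^*y=0$. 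Using $\theta^M(x_d)=x_d$ one gets $\|x_d^*x_d\|=\|x_d^*x_d-\theta^M(y)^*y\|\le\|x_d\|\,\epsilon+\epsilon\,\|y\|$, forcing $x_d=0$; the case $d<0$ is symmetric with $\theta^M(y)y^*=0$. For the degree-zero part $x_0$, lying in the core, I would use the word-length grading on the Fock space: because $\varphi(\F)\cap\calK(H_\F)=\{0\}$, the vacuum compression on $\E_\F$ yields a $\theta$-equivariant conditional expectation $E_\F\colon\OHF^\gamma\to\F$ with $E_\F(S_\mu a S_\nu^*)=\delta_{|\mu|,0}\,a$. Then $E_\F(x_0)\in\F$, the remainder $x_0-E_\F(x_0)$ is $\theta$-fixed with every word of length $\ge1$, and the same shift-to-infinity estimate (now $S_{\mu'+M}^*S_\mu=0$ since each $\mu$ is nonempty) gives $x_0-E_\F(x_0)=0$. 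Hence $x_0\in\F$, and $(\OHF)^\theta=\F$. The mechanism underlying the core case is simply that a compact operator commuting with the bilateral shift $W$ on $\ell^2(\Z)$ must vanish, since $B_p:=\overline{\mathrm{span}}\{S_\mu a S_\nu^*:|\mu|=|\nu|=p\}\cong\calK(\ell^2(\Z^p))\otimes\F$ and $\theta$ acts there as $\Ad(W^{\otimes p})\otimes\id$.

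Finally, to prove that $\theta^n$ is outer for $n\ne0$, suppose it is inner, say $\theta^n=\Ad(u)$ for a unitary $u\in\OHF$. Since $\theta$ commutes with $\theta^n$, we get $\Ad(\theta(u))=\theta\,\theta^n\,\theta^{-1}=\theta^n=\Ad(u)$, so $\theta(u)u^*$ is central; by simplicity of $\OHF$ it is a scalar, $\theta(u)=\lambda u$ with $\lambda\in\mathbb{T}$. Applying $\theta^n=\Ad(u)$ to $u$ gives $\theta^n(u)=uuu^*=u$, whence $\lambda^n=1$ and $\theta(u^n)=\lambda^n u^n=u^n$, i.e. $u^n\in(\OHF)^\theta=\F$. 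But $\Ad(u^n)=(\Ad u)^n=\theta^{n^2}$, and for $w:=u^n\in\F$ the bimodule relations give $wS_0w^*=T_{(\pi_\F(w)\xi_0)\otimes e_0\otimes w^*}$, so that $S_{n^2}^*\,wS_0w^*=\varphi_+(\langle\xi_0\mid\pi_\F(w)\xi_0\rangle_H\,\delta_{n^2,0}\,w^*)=0$ because $n^2\ne0$, whereas $\theta^{n^2}(S_0)=S_{n^2}$ satisfies $S_{n^2}^*S_{n^2}=1$. This contradiction shows $\theta^n$ is outer. I expect the main obstacle of the whole proof to be the fixed point computation — concretely, justifying the two conditional expectations $E$ and $E_\F$ and organizing the shift-to-infinity orthogonality cleanly across the degree and word-length gradings; once $(\OHF)^\theta=\F$ is established, outerness is immediate.
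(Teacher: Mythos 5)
Your argument is correct in substance, but note that the paper itself does not prove this lemma: it is imported verbatim from the companion preprint \cite{MatSogabe4} (Lemmas 3.2 and 3.3), so there is no in-paper proof to measure you against. Taken on its own terms, your proof is a valid and essentially complete route to both assertions: the gauge action $\gamma$ exists by the universal property of Theorem \ref{thm:universalityforOHF} (the family $\{zS_n\}_n$ satisfies \eqref{eq:SiB} over the same copy of $\F$), it commutes with $\theta$, the Fourier components $x_d$ of a $\theta$-fixed element are again $\theta$-fixed, and the shift-to-infinity orthogonality kills every $x_d$ with $d\neq 0$ as well as the length-$\ge 1$ part of $x_0$ after subtracting the vacuum expectation $E_\F(x_0)$ (an expectation the paper itself uses in Section 6, so its existence is uncontroversial given $\varphi(\F)\cap\calK(H_\F)=\{0\}$). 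The outerness argument --- $\theta(u)=\lambda u$ by simplicity, hence $u^n\in(\OHF)^\theta=\F$, and then $S_{n^2}^*\,u^nS_0u^{-n}=0\neq 1=S_{n^2}^*S_{n^2}$ --- is clean and correct.

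One imprecision worth fixing: the dense spanning set of $\OHF$ is \emph{not} the span of words $S_\mu aS_\nu^*$ with a single middle coefficient $a\in\F$. Since $T_{\xi\otimes e_k\otimes c}=t_{\xi,\xi_0}S_k\,c$ and elements of $\F$ cannot be commuted past the $S_k$, the correct spanning words are general alternating products $b_0S_{k_1}b_1\cdots b_{p-1}S_{k_p}\,a\,S_{l_q}^*c_{q-1}\cdots S_{l_1}^*c_0$ with coefficients from $\F$ interspersed (this is exactly the form the paper uses in the proof of Lemma \ref{UniversalhatA2}). This does not damage your argument: each such word still has a well-defined gauge degree $p-q$ and finitely many shift indices, and the identity $S_j^*\,b\,S_k=\langle\xi_0\mid b\,\xi_0\rangle_H\,\delta_{j,k}\,1$ for $b\in\F$ gives $\theta^M(W')^*W=0$ for separated index windows exactly as before, provided $p,p'\ge 1$ --- which is guaranteed when $d>0$ and, after subtracting $E_\F$, when $d=0$. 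You should simply state the spanning set correctly; the rest of the proof, including the minor point that $x=\sum_d x_d$ is to be read via vanishing of all Fourier coefficients of $x-x_0$ (Fej\'er summation) rather than as a norm-convergent series, goes through unchanged.
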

Since $\theta$ satisfies $\theta(f) = f$,
it induces an automorphism written $\hat{\theta}$
on $\whatA$ by restricting
$\theta$ to the subalgbera $f \OHF f$.
The automorphism $\hat{\theta}$ on $\whatA$ is nothing but the one constructed in 
\cite[Theorem 3.1]{MatSogabe4}.
Lemma \ref{lem:5.1} together with the minimality of the projection $f$ in $\F$
directly tells us that the fixed point algebra $(f \OHF f)^{\theta}$ of
$f \OHF f$ under $f$ is the scalar multiple of the unit $f$ (i.e., $f\mathcal{F}f=\mathbb{C}f$). 
The following result was proved in \cite{MatSogabe4} in a more general setting.
\begin{proposition}[{\cite[Theorem 3.1]{MatSogabe4}}]
The automorphism
$\hat{\theta}^n$ on $\whatA$ for each $n \in \Z$ with $n\ne 0$
is outer, and the fixed point algebra $(\whatA)^\theta$ 
of $\whatA$ under $\hat{\theta}$ is $\mathbb{C}1_{\whatA}$ (i.e., the automorphism $\hat{\theta}$ is aperiodic and ergodic on $\whatA$).
\end{proposition}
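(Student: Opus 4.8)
The plan is to establish the two assertions by reducing them to the corresponding statements for $\theta$ on $\OHF$ recorded in Lemma \ref{lem:5.1}. Ergodicity is the easy half and is essentially contained in the remark preceding the statement; the substantive point is the outerness of every nonzero power of $\hat\theta$, and here I would route the argument through crossed products, where fullness of the corner projection is well behaved.

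For ergodicity, first I would note that the fixed-point algebra of the corner equals the corner of the fixed-point algebra: if $x \in f\OHF f$ satisfies $\hat\theta(x)=x$, then $\theta(x)=x$, so $x$ lies in $(\OHF)^\theta = \F$ by Lemma \ref{lem:5.1} and $x = fxf \in f\F f$; conversely every element of $f\F f$ is fixed and sits inside $f\OHF f$. Hence $(f\OHF f)^{\hat\theta} = f\F f$. Since $f$ is a minimal projection of the ideal $\calK(H) \triangleleft \F$, one has $fxf \in f\calK(H)f = \mathbb{C}f$ for every $x \in \F$, so $f\F f = \mathbb{C}f = \mathbb{C}1_{\whatA}$, which is ergodicity.

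For aperiodicity, the obstacle is that outerness does not transfer in an elementary way between a simple $C^*$-algebra and a full corner; to bridge this I would pass to $\Z$-crossed products. Since $\theta(f)=f$, the canonical unitary $u$ implementing $\theta$ commutes with $f$, and $fuf$ implements $\hat\theta$, giving a natural identification $(f\OHF f)\rtimes_{\hat\theta}\Z \cong f\,(\OHF\rtimes_\theta\Z)\,f$. As $\OHF$ is simple the projection $f$ is full in $\OHF$, hence full in $\OHF\rtimes_\theta\Z$, so this corner is Morita equivalent to $\OHF\rtimes_\theta\Z$ and is therefore simple exactly when $\OHF\rtimes_\theta\Z$ is. By Lemma \ref{lem:5.1} every $\theta^n$ with $n\ne 0$ is outer, so the standard characterization of simplicity of crossed products of simple $C^*$-algebras by a single automorphism (Kishimoto) shows $\OHF\rtimes_\theta\Z$ is simple; hence so is $(f\OHF f)\rtimes_{\hat\theta}\Z$. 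Running the elementary converse direction of that same characterization then forces every $\hat\theta^n$ with $n\ne 0$ to be outer, completing the proof.

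I expect the only delicate points to be the verification that $f$ remains full after passing to the crossed product and the identification of the invariant-corner crossed product with the corner of the crossed product. Both are standard consequences of the invariance $\theta(f)=f$ together with simplicity of $\OHF$, but this is exactly where the argument has content, since it is what lets the outerness of $\theta$ descend to the genuinely smaller algebra $\whatA = f\OHF f$.
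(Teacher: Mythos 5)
Your argument is correct. The ergodicity half coincides with the paper's own reasoning: the paper likewise deduces $(f\OHF f)^{\hat{\theta}}=f\,(\OHF)^{\theta}f=f\F f=\mathbb{C}f$ from Lemma \ref{lem:5.1} together with the minimality of $f$ in $\calK(H)\triangleleft\F$. The difference is in the aperiodicity: the paper proves nothing here, but simply quotes \cite[Theorem 3.1]{MatSogabe4}, where outerness of $\hat{\theta}^n$ is established in a more general setting. Your crossed-product route --- identifying $(f\OHF f)\rtimes_{\hat{\theta}}\Z$ with the corner $f\,(\OHF\rtimes_\theta\Z)\,f$ via $\theta(f)=f$, noting that $f$ is full there because $\OHF$ is simple and unital, obtaining simplicity of $\OHF\rtimes_\theta\Z$ from Lemma \ref{lem:5.1} by Kishimoto's theorem, transporting simplicity across the Morita equivalence of a full corner, and then reading outerness of every $\hat{\theta}^n$ off the elementary converse of the same characterization --- is a legitimate self-contained substitute for that citation. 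What it buys is independence from \cite{MatSogabe4}, at the cost of importing the crossed-product machinery; the only hypotheses the converse direction needs, namely that $\whatA=f\OHF f$ is simple and unital, hold because it is a full corner of the simple unital algebra $\OHF$, so the argument closes.
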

We will write $\hat{\theta}$ in terms of $\T $ and $ t_n , n\in \Z$
in the following way.
For $b \in \T = S_0 \F S_0^*$, so that 
$ b = S_0 a S_0^*$ for some $ a ( = S_0^* b S_0) \in \F$.
We then have  
for $n \in \Z$ with $n \ne 0$,
\begin{align*}
\hat{\theta}^n(b) = \theta^n(S_0 a S_0^*)
= S_n a  S_n^*
= S_nS_0^* S_0  a  S_0^* S_0 S_n^*
= t_n b t_n^*,
\end{align*}
so that $\hat{\theta}^n(b) = t_n b t_n^*$ for $b \in \T$ and $ n \in \Z, n \ne 0$.
In particular $\hat{\theta}(b) = t_1 b t_1^*$ for $b \in \T$.
We note that 
$ b = \hat{\theta}^0(b) \ne t_0 b t_0^* = S_0 b S_0^*$.
And also we have
\begin{align*}
\hat{\theta}(t_0 ) 
=  \theta(S_0)\theta(f)= S_1 S_0^* S_0 f = t_1 t_0, \qquad
\hat{\theta}(t_{-1} ) 
=  \theta(S_{-1})\theta(S_0^*)= S_0 S_1^* = t_1^*.
\end{align*} 
 Similarly
we have for $n \in \Z$ with $n \ne 0, -1$,
\begin{align*}
\hat{\theta}(t_n ) = \theta(S_n S_0^*)
= S_{n+1} S_1^*
= S_{n+1} S_0^* S_0 S_1^*
= t_{n+1} t_1^*.
\end{align*}
Therefore we have
\begin{proposition}\label{prop:ergodicauto}
In the reciprocal dual 
$\whatA = C^*_{\text{univ}}(\T, \{ t_n \}_{n \in \Z})$,
the correspondence $\hat{\theta}:\whatA \rightarrow \whatA$ defined by  
\begin{equation*}
\begin{cases}
\hat{\theta}(b)  & = t_1 b t_1^* \quad \text{ for } b \in \T, \\
\hat{\theta}(t_0)  & = t_1 t_0, \\
\hat{\theta}(t_{-1})  & = t_1^*, \\
\hat{\theta}(t_n)  & = t_{n+1} t_1^* \quad \text{ for } n \in \Z, n \ne 0, -1 
\end{cases}
\end{equation*}
gives rise to an aperiodic ergodic automorphism on $\whatA$.
\end{proposition}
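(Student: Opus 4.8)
The plan is to obtain $\hat{\theta}$ not by checking the displayed formulas against the defining relations \eqref{eq:univZ} of $C^*_{\text{univ}}(\T,\{t_n\}_{n\in\Z})$, but by exhibiting it as the restriction of an automorphism already in hand on the ambient algebra $\OHF$. Concretely, the shift $\theta$ of \eqref{eq:autthetaOHF} is a genuine $*$-automorphism of $\OHF$ by the universal property of Theorem \ref{thm:universalityforOHF}, and since $f\in\F$ with $\theta(f)=f$, it maps the corner $f\OHF f$ onto itself and thus restricts to an automorphism $\hat{\theta}$ of $\whatA=f\OHF f$. The whole statement then reduces to identifying the action of this restriction on the generating set $\T\cup\{t_n\}_{n\in\Z}$.

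First I would record that $\whatA$ is indeed generated by $\T$ and $\{t_n\}_{n\in\Z}$: this is Lemma \ref{UniversalhatA4} together with the universal presentation of Theorem \ref{thm:mainuniversalA3}. Consequently a homomorphism of $\whatA$ is determined by its values on these generators, so it suffices to evaluate $\hat{\theta}=\theta|_{f\OHF f}$ on them. Using $t_n=S_nS_0^*$ for $n\neq0$, $t_0=S_0f$, $\T=S_0\F S_0^*$, together with $\theta(S_n)=S_{n+1}$, $\theta|_\F=\id$, and $S_n^*S_m=\delta_{n,m}1$, $fS_n=S_n$, the four identities in the statement follow by direct substitution; for instance $\hat{\theta}(S_0aS_0^*)=S_1aS_1^*=t_1(S_0aS_0^*)t_1^*$ and $\hat{\theta}(t_{-1})=\theta(S_{-1})\theta(S_0^*)=S_0S_1^*=t_1^*$, which are precisely the computations recorded just before the statement. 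Hence the correspondence written in the proposition agrees with $\hat{\theta}$ on generators and therefore \emph{equals} the automorphism $\hat{\theta}$. In particular it genuinely gives rise to a $*$-automorphism, and no separate consistency check is needed, because $\hat{\theta}$ is produced as an honest restriction rather than posited abstractly.

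It remains to see that $\hat{\theta}$ is aperiodic and ergodic, which is exactly the content of the Proposition quoted immediately above (\cite[Theorem 3.1]{MatSogabe4}): for every $n\in\Z$, $n\neq0$, the power $\hat{\theta}^n$ is outer, and $(\whatA)^{\hat{\theta}}=\mathbb{C}1_{\whatA}$. I would simply invoke that result. For completeness I would recall its mechanism, since that is where the only real work lies. Outerness of $\theta^n$ on $\OHF$ and the identity $(\OHF)^\theta=\F$ are Lemma \ref{lem:5.1}; these descend to the corner because $f$ is a \emph{minimal} projection of $\calK\subset\F$, whence $f\F f=\mathbb{C}f$ and any $\hat{\theta}$-fixed $x\in f\OHF f$ satisfies $\theta(x)=x$, so $x\in(\OHF)^\theta=\F$ and $x=fxf\in f\F f=\mathbb{C}f=\mathbb{C}1_{\whatA}$, giving ergodicity. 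The expected main obstacle therefore does \emph{not} sit in this proposition at all: the genuine difficulties — the outerness of the shift $\theta^n$ and the computation $(\OHF)^\theta=\F$, and the transport of ergodicity to the minimal corner — are carried by Lemma \ref{lem:5.1} and the cited \cite[Theorem 3.1]{MatSogabe4}, both of which we may assume. The proposition itself is a bookkeeping identification of the already-constructed $\hat{\theta}$ in terms of the generators $\T$ and $\{t_n\}_{n\in\Z}$.
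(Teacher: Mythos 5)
Your proposal is correct and follows essentially the same route as the paper: the paper also obtains $\hat{\theta}$ as the restriction of the shift $\theta$ of \eqref{eq:autthetaOHF} to the corner $f\OHF f$ (using $\theta(f)=f$), verifies the four displayed formulas by the same direct substitutions with $t_0=S_0f$, $t_n=S_nS_0^*$, $\T=S_0\F S_0^*$, and delegates aperiodicity and ergodicity to Lemma \ref{lem:5.1} and the cited \cite[Theorem 3.1]{MatSogabe4} together with the minimality of $f$. No substantive difference.
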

\begin{remark}
It is easy to see that the conjugacy class of the automorphism $\hat{\theta}$ of $\whatA$
does not depend on the choice of a bijection between $\N$ and $\Z$ in exchanging the index set
of the family $\{t_j\}_j$ of partial isometries.
\end{remark}

\section{Invariant states}
In this section, we will show that a $\hat{\theta}$-invariant state on $\whatA$ is unique,
so that the automorphism $\hat{\theta}$ on $\whatA$ is uniquely ergodic.
In \cite[Theorem 1.5]{MatSogabe4},
it is shown that there exists a $\hat{\theta}$-invariant state on $\whatA$ in a more general setting.
The construction of the $\hat{\theta}$-invariant state in our setting is the following.
Let $\F$ be a separable unital nuclear UCT $C^*$-algebra containing $\calK(H)$ as an essential ideal
and  $f \in \calK(H)$ a minimal projection  satisfying \eqref{eq:KFExtsA}. 
Then we have a $C^*$-algebra $\T_{H_\F}$ as in the proof of Lemma \ref{lem:UniversalAhat1}.
Let $E_\F: \T_{H_\F}\rightarrow \F$ 
be the conditional expectation arising from a projection on the Fock space
$\E_\F =\F \oplus \bigoplus_{n=1}^\infty H_\F^{\otimes n}$ 
onto the subspace $\F$.
As $f$ is a minimal projection in $\F$,
there exists a scalar $\varphi_{\whatA}(f x f) \in \mathbb{C}$
for $x \in \O_{H_\F}$ satisfying 
\begin{equation}\label{eq:invariantstate}  
f E_\F(f x f) f = \varphi_{\whatA}(f x f) f \quad \text{ for } \, \, x \in \T_{H_\F}.
\end{equation}
Under the identification between $\T_{H_\F}$ and $\O_{H_\F}$,  
we have a state $\varphi_{\whatA}$ on $f \O_{H_\F} f = \whatA$,
which was proved  in \cite{MatSogabe4} to be invariant under the automorphism $\hat{\theta}$.

Let us represent the reciprocal algebra $\whatA$ as $C^*_{\text{univ}}(\T, \{ t_n \}_{n \in \Z})$
as in the previous section.
We will show the following proposition.
\begin{proposition}\label{prop:nvariantstate2}
Let $\varphi$ be a state on $\whatA$ such that
$\varphi\circ \hat{\theta} =\varphi$.
Let $a \in \whatA$ be a finite word of elements of 
$\T \cup \{ t_j\}_{j \in \Z} \cup \{ t_j^*\}_{j \in \Z}$ 
such that  $\varphi(a) \ne 0$.
 Then we have $ a \in \mathbb{C} 1_{\whatA}$.
\end{proposition}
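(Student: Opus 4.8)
The plan is to prove that $\varphi$ must annihilate a distinguished family of mutually orthogonal projections, and then to squeeze $\varphi(a)$ to zero via the Cauchy--Schwarz inequality after reducing $a$ to a normal form. Throughout I would work inside $\OHF$, recalling that $\whatA = f\OHF f$ with $\T = S_0\F S_0^*$, $t_0 = S_0 f$, $t_n = S_n S_0^*$ for $n\neq 0$, and that $\hat{\theta}$ is the restriction to $f\OHF f$ of the shift $\theta(S_n)=S_{n+1}$.

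The crux is the following observation. Set $p_0 := 1_\T = S_0 S_0^*$ and $p_n := t_n t_n^* = S_n S_n^*$ for $n\neq 0$; these all lie in $\whatA$ since $S_n S_n^* \le f = 1_{\whatA}$. Because $\hat{\theta}^k$ is the restriction of $\theta^k$ (equivalently, by the formula $\hat{\theta}(b)=t_1 b t_1^*$ on $\T$ of Proposition \ref{prop:ergodicauto}), one gets $\hat{\theta}^k(p_0) = \theta^k(S_0 S_0^*) = S_k S_k^* = p_k$ for every $k\in\Z$. The defining relation $\sum_{|n|\le m} S_n S_n^* < f$ shows the $p_k$ are mutually orthogonal with $\sum_{|k|\le m} p_k \le 1_{\whatA}$, so $\sum_{|k|\le m}\varphi(p_k)\le 1$. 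Invariance gives $\varphi(p_k)=\varphi(\hat{\theta}^k(p_0))=\varphi(p_0)$ for all $k$, whence $(2m+1)\varphi(p_0)\le 1$ for every $m$, forcing $\varphi(p_k)=0$ for all $k\in\Z$. This single estimate is where the mechanism for unique ergodicity really lives.

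Next I would reduce the word $a$. Each admissible factor has the form $S_\alpha c S_\beta^*$ with $|\alpha|,|\beta|\le 1$ and $c\in\F$, so the two structural relations of $\OHF$ recorded after Lemma \ref{lem:UniversalAhat1} (namely $T_u^* T_v = \langle u\mid v\rangle$-type relations and the covariance identity, which specialize to $S_i^* S_j=\delta_{ij}1$) let me rewrite $a$, inside $\OHF$, either as $0$ or in normal form $a = S_\mu b S_\nu^*$ with $b\in\F$ and multi-indices $\mu,\nu$. If $\mu=(\mu_1,\dots)\neq\emptyset$, then $S_{\mu_1}^*S_{\mu_1}=1$ gives $a = p_{\mu_1} a$, so Cauchy--Schwarz yields $|\varphi(a)|^2=|\varphi(p_{\mu_1}a)|^2\le \varphi(p_{\mu_1})\,\varphi(a^*a)=0$, contradicting $\varphi(a)\neq 0$. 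If $\mu=\emptyset$ but $\nu\neq\emptyset$, applying this to $a^*=S_\nu b^*$ together with $\varphi(a)=\overline{\varphi(a^*)}$ again forces $\varphi(a)=0$. Hence necessarily $\mu=\nu=\emptyset$, so $a=b\in\F$ and $a = faf \in f\F f = \bbC f = \bbC 1_{\whatA}$, since $f$ is a minimal projection of the ideal $\calK(H)\triangleleft\F$.

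The one step that needs genuine care is the normal-form reduction: I must justify that every finite word in $\T\cup\{t_j\}_{j\in\Z}\cup\{t_j^*\}_{j\in\Z}$ collapses to a single monomial $S_\mu b S_\nu^*$ (or to $0$) rather than an irreducible sum, and that the middle $\F$-factor is genuinely absorbed into $\F$ under the relations $S_i^* c S_j$. Once that bookkeeping is in place, the remaining two cases are immediate from the vanishing $\varphi(p_k)=0$ established above, so I expect the reduction to be the only laborious point while the orthogonality estimate carries all the analytic weight.
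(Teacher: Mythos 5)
Your argument takes a genuinely different route from the paper's. The paper stays entirely inside the universal picture $C^*_{\text{univ}}(\T,\{t_j\}_{j\in\Z})$ and runs a lengthy combinatorial case analysis (Cases A, B, C with subcases) on the abstract word $a=a_1\cdots a_m$, first forcing $a_1=t_0^*$ and $a_m=t_0$ via Cauchy--Schwarz and Lemma \ref{lem:invariantstate2}, then inductively collapsing the word to $c\,t_0^{*p}t_0^{q}$ and finally to $\bbC 1_{\whatA}$. You instead pass to the ambient algebra $\OHF$ and exploit the concrete realization $t_0=S_0f$, $t_n=S_nS_0^*$, $\T=S_0\F S_0^*$. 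Your key estimate --- that the projections $p_k=\hat{\theta}^k(1_\T)=S_kS_k^*$ are mutually orthogonal with $\sum_{|k|\le m}p_k\le 1_{\whatA}$, so invariance forces $\varphi(p_k)=0$ --- is exactly the mechanism of the paper's Lemma \ref{lem:invariantstate2}, and your use of Cauchy--Schwarz to peel off a leading or trailing projection is sound. What your route buys is a much shorter endgame: once a word is known to begin with some $S_j$ (resp.\ end with some $S_j^*$), a single application of $a=p_ja$ kills $\varphi(a)$, avoiding the paper's multi-case induction.

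The one real defect is the normal form you propose. It is \emph{not} true that every finite word collapses to a single monomial $S_\mu bS_\nu^*$ with one middle factor $b\in\F$: for $b=S_0cS_0^*\in\T$ with $c\xi_0\notin\bbC\xi_0$ one has
\begin{equation*}
b\,t_0\,t_3=S_0\,(cf)\,S_3\,S_0^*=S_0\,T_{c\xi_0\otimes e_3\otimes 1_\F}\,S_0^*,
\end{equation*}
which has an $\F$-coefficient wedged \emph{between} two creation operators and cannot be absorbed into a single $b\in\F$. The correct normal form is a general Toeplitz monomial $T_{w_1}\cdots T_{w_p}\,\varphi_+(c)\,T_{z_1}^*\cdots T_{z_q}^*$ with elementary tensors $w_i,z_j\in H_\F$ (products of your generators never produce sums, since $T_u^*T_v=\varphi_+(\langle u\mid v\rangle)$ and $\varphi_+(x)T_u\varphi_+(y)=T_{xuy}$ preserve monomials). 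Fortunately your argument only ever uses the \emph{outermost} factor: since $a=fa$, the leading vector satisfies $fw_1=\lambda\,\xi_0\otimes e_j\otimes x$ for some $j$ and scalar $\lambda$ (as $f$ has rank one), whence $T_{fw_1}=\lambda S_j\varphi_+(x)$ and $a=p_ja$; dually for $q\ge 1$; and if $p=q=0$ then $a=\varphi_+(fcf)\in\bbC\varphi_+(f)=\bbC 1_{\whatA}$ by minimality of $f$. So the proof closes once you replace your stated normal form by this weaker (and true) statement; as written, the reduction step you flagged as ``the only laborious point'' is not merely unverified but resting on a false intermediate claim.
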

To prove the proposition, we provide two lemmas.
The next one is straightforward from Proposition \ref{prop:ergodicauto}.
\begin{lemma}\label{lem:Invariantstate1}
\begin{equation*}
\hat{\theta}(t_n t_n^*)
=
\begin{cases}
t_1 e t_1^*  & \quad \text{ for } n=0, \\
1_\T & \quad \text{ for } n= -1, \\
t_{n+1} t_{n+1}^*   & \quad \text{ for } n\ne 0, -1. 
\end{cases}
\end{equation*}
\end{lemma}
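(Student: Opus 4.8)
The statement is a purely computational corollary of Proposition~\ref{prop:ergodicauto}, so the plan is to apply $\hat{\theta}$ directly to each expression $t_n t_n^*$ and simplify using the defining relations \eqref{eq:univZ}. First I would treat the generic case $n \ne 0, -1$: by Proposition~\ref{prop:ergodicauto} we have $\hat{\theta}(t_n) = t_{n+1} t_1^*$, so $\hat{\theta}(t_n t_n^*) = \hat{\theta}(t_n)\hat{\theta}(t_n)^* = t_{n+1} t_1^* t_1 t_{n+1}^*$. Since $t_1^* t_1 = 1_\T$ by \eqref{eq:univZ}(1), and since $t_{n+1}$ has $t_{n+1}^* t_{n+1} = 1_\T$ as well so that $t_{n+1} 1_\T = t_{n+1}$ (the source projection of $t_{n+1}$ absorbs $1_\T$), this collapses to $t_{n+1} t_{n+1}^*$. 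I should double-check the unit bookkeeping here: $t_1^* t_1 = 1_\T$ is exactly the unit of $\T$, and $t_{n+1} t_{n+1}^*$ is a subprojection of $e < 1_\T$, so the multiplication $t_{n+1} \cdot 1_\T \cdot t_{n+1}^*$ is legitimate and yields $t_{n+1} t_{n+1}^*$.

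For the two boundary cases I would compute separately. For $n = -1$, Proposition~\ref{prop:ergodicauto} gives $\hat{\theta}(t_{-1}) = t_1^*$, hence $\hat{\theta}(t_{-1} t_{-1}^*) = t_1^* t_1 = 1_\T$ by \eqref{eq:univZ}(1), which is the claimed value. For $n = 0$, we have $\hat{\theta}(t_0) = t_1 t_0$, so $\hat{\theta}(t_0 t_0^*) = t_1 t_0 t_0^* t_1^*$. Using $t_0 t_0^* = e$ from \eqref{eq:univZ}(2), this equals $t_1 e t_1^*$, matching the first line of the asserted identity. No further simplification of $t_1 e t_1^*$ is needed or claimed.

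Assembling these three computations gives precisely the three-way case split in the statement. The only genuine care-point, which I would flag as the single step needing attention rather than a real obstacle, is ensuring the projections are multiplied in the correct order and that the source/range projection relations $t_j^* t_j = 1_\T$ (for $j \ne 0$) and $t_0^* t_0 = 1_{\whatA}$ are applied to the correct factor; everything else is immediate substitution from Proposition~\ref{prop:ergodicauto} together with \eqref{eq:univZ}. Because $\hat{\theta}$ is a $*$-homomorphism, the adjoint relations $\hat{\theta}(t_n^*) = \hat{\theta}(t_n)^*$ are used without comment, which is what makes the whole verification a one-line calculation in each case.
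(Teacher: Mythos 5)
Your computation is correct and is exactly the ``straightforward'' verification the paper has in mind: the paper offers no written proof of this lemma, asserting only that it follows directly from Proposition~\ref{prop:ergodicauto}, and your three case-by-case substitutions using \eqref{eq:univZ} are the intended argument. One harmless misstatement in your aside: for $n+1\ne 0$ the range projection $t_{n+1}t_{n+1}^*$ is \emph{orthogonal} to $1_\T$ by the relation $1_\T+\sum_{0<|j|\le m}t_jt_j^*<1_{\whatA}$, not a subprojection of $e$; but your simplification only uses $t_{n+1}^*t_{n+1}=1_\T$, so nothing is affected.
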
 
By using the lemma above, we have the following lemma.
\begin{lemma}\label{lem:invariantstate2}
Let $\varphi$ be a state on $\whatA$ such that
$\varphi\circ \hat{\theta} =\varphi$.
Then we have 
$\varphi(t_j t_j^*) =0$ for all $j \in \Z,$
and
$\varphi(b) =\varphi(t_i^* t_i) =0$ 
for all $b \in \T$ and $i \in \Z$ with $i \ne 0$.
\end{lemma}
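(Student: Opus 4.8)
The plan is to run a ``no invariant probability measure on $\Z$'' argument: the invariance $\varphi\circ\hat{\theta}=\varphi$ transports the values $\varphi(t_nt_n^*)$ along an infinite orbit of mutually orthogonal projections, and since these projections all sit below the unit $1_{\whatA}$, their $\varphi$-values cannot all remain positive. First I would feed Lemma~\ref{lem:Invariantstate1} into the invariance relation. For $n\ne 0,-1$ we get $\varphi(t_nt_n^*)=\varphi(\hat{\theta}(t_nt_n^*))=\varphi(t_{n+1}t_{n+1}^*)$, so $\varphi(t_nt_n^*)$ is constant on $n\ge 1$, say equal to $c_+$, and constant on $n\le -1$, say equal to $c_-=\varphi(t_{-1}t_{-1}^*)$; both constants are nonnegative because each $t_nt_n^*$ is a projection and $\varphi$ is a state.

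Next I would exploit the defining relation $1_\T+\sum_{0<|j|\le m}t_jt_j^*<1_{\whatA}$. Since $1_\T$ and the $t_jt_j^*$ with $j\ne 0$ are mutually orthogonal projections, the left-hand side is a genuine subprojection of $1_{\whatA}$, so positivity of $\varphi$ together with $\varphi(1_{\whatA})=1$ gives $\varphi(1_\T)+mc_++mc_-\le 1$ for every $m\in\N$. Letting $m\to\infty$ forces $c_+=c_-=0$, hence $\varphi(t_jt_j^*)=0$ for all $j\ne 0$. This uniform-in-$m$ estimate, which rules out a nonzero invariant ``mass'' spread over the infinite orbit, is the crux of the argument; everything else is bookkeeping, and I expect the only care needed is keeping the mutual orthogonality explicit.

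To close the remaining cases I would treat $j=0$ and the unit of $\T$ separately. From the $n=0$ line of Lemma~\ref{lem:Invariantstate1}, $\varphi(t_0t_0^*)=\varphi(t_1et_1^*)$, and since $e\le 1_\T$ yields $t_1et_1^*\le t_1\,1_\T\,t_1^*=t_1t_1^*$, monotonicity of $\varphi$ gives $0\le\varphi(t_0t_0^*)\le\varphi(t_1t_1^*)=0$; thus $\varphi(t_jt_j^*)=0$ for all $j\in\Z$. The $n=-1$ line gives $\varphi(1_\T)=\varphi(\hat{\theta}(t_{-1}t_{-1}^*))=\varphi(t_{-1}t_{-1}^*)=c_-=0$, and since $t_i^*t_i=1_\T$ for $i\ne 0$ this already yields $\varphi(t_i^*t_i)=0$. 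Finally, for arbitrary $b\in\T$ I would apply the Cauchy--Schwarz inequality for the state $\varphi$ using the unit $1_\T$ of $\T$: $|\varphi(b)|^2=|\varphi(1_\T b)|^2\le\varphi(1_\T)\varphi(b^*b)=0$, so $\varphi(b)=0$. No serious obstacle is anticipated beyond making the domination $t_1et_1^*\le t_1t_1^*$ and the orthogonality of the family precise.
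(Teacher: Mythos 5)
Your proposal is correct and follows essentially the same route as the paper: shift the projections $t_nt_n^*$ along the $\hat{\theta}$-orbit using Lemma \ref{lem:Invariantstate1}, use the relation $1_\T+\sum_{0<|j|\le m}t_jt_j^*<1_{\whatA}$ to force the common value to vanish, handle $j=0$ via $t_1et_1^*\le t_1t_1^*$, and finish with Cauchy--Schwarz for $b\in\T$. The only (harmless) difference is that you kill the negative-index constant $c_-$ directly from the summation inequality, whereas the paper routes it through $\varphi(t_{-1}t_{-1}^*)=\varphi(\hat{\theta}^2(t_{-1}t_{-1}^*))=\varphi(t_1 1_\T t_1^*)\le\varphi(t_1t_1^*)=0$; both are valid.
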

\begin{proof}
We first see that $\varphi(t_1 t_1^*) =0.$
Put $\epsilon =\varphi(t_1 t_1^*).$
By Lemma \ref{lem:Invariantstate1}, 
we have for $n \in \N$
\begin{equation}\label{eq:varphin}
\varphi(t_{n+1} t_{n+1}^*) 
=\varphi\circ \hat{\theta}^n(t_1 t_1^*) 
=\varphi(t_1 t_1^*)
=\epsilon.
\end{equation}
The inequality $1_\T + \sum_{n=1}^m t_j t_j^* < 1_{\whatA}$ 
tells us that 
$\varphi(1_\T) + m \epsilon <1$ for all $m \in \N$, so that 
we have $\epsilon =0$, 
and \eqref{eq:varphin} shows $\varphi(t_{n} t_{n}^*) =0$ for all $n \in \N$.
Since
\begin{equation*}
0 \le \varphi(t_0 t_0^*) 
=\varphi\circ \hat{\theta}(t_0 t_0^*) 
=\varphi(t_1e t_1^*)
\le
\varphi(t_1 t_1^*), 
\end{equation*}
we have
$\varphi(t_0 t_0^*)  =0.$
We also have
\begin{equation}\label{eq:6.2}
 0 \le
 \varphi(t_{-1} t_{-1}^*) 
=\varphi\circ \hat{\theta}^2(t_{-1} t_{-1}^*) 
=\varphi\circ \hat{\theta}(1_\T)
=\varphi(t_1 1_\T t_1^*)
\le
\varphi(t_1 t_1^*)
\end{equation}
and smilarly for $n \ge 2$ we have
\begin{equation*}
 \varphi(t_{-n} t_{-n}^*) 
=\varphi\circ \hat{\theta}^{n-1}(t_{-n} t_{-n}^*) 
=\varphi(t_{-1} t_{-1}^*)
=0.
\end{equation*}
Therefore
$\varphi(t_j t_j^*) =0$ for all $j \in \Z.$
As in \eqref{eq:6.2}, we have
$\varphi(1_\T) = \varphi\circ\hat{\theta}(1_\T) =0$,
so that 
$$
|\varphi(b)| = |\varphi( 1_\T b )| \le \varphi(1_\T)^{\frac{1}{2}} \varphi(b^* b)^{\frac{1}{2}} =0
\quad \text{ for } b \in \T.
$$
\end{proof}
\begin{proof}[Proof of Proposition \ref{prop:nvariantstate2}]
Recall that $e\in\T$ is a minimal projection (i.e., $e\T e=\mathbb{C}e$).
Let 
$a\in C^*_{\text{univ}}(\T, \{t_j\}_{j \in \Z})$ 
be  of the form
\begin{equation}\label{eq:forma1}
a = a_1 a_2 \cdots a_m \quad \text{ where } \quad a_i \in \T \cup \{ t_j \}_{j \in \Z} \cup \{ t_j^* \}_{j \in \Z}.
\end{equation}
Assume that $\varphi(a) \ne 0$.
If $a_1 \in \T\cup \{ t_j \}_{j \in \Z}\cup \{ t_j^* \}_{j \in \Z, j\ne 0}$,
the inequality
\begin{equation*}
|\varphi(a) | = |\varphi(a_1 a_2 \cdots a_m ) | 
\le \varphi(a_1 a_1^*)^{\frac{1}{2}} \varphi((a_2 \cdots a_m)^* (a_2 \cdots a_m) )^{\frac{1}{2}} 
\end{equation*}
together with Lemma \ref{lem:invariantstate2}
shows us 
$\varphi(a_1 a_1^*) =0$ so that $\varphi(a)=0$, 
a contrdiction to the hypothesis that 
$\varphi(a) \ne 0.$
If $a_m \in \T\cup \{ t_j^* \}_{j \in \Z}\cup \{ t_j \}_{j \in \Z, j \ne 0}$,
the inequality
\begin{equation*}
|\varphi(a) | = |\varphi(a_1 \cdots a_{m-1} a_m ) | 
\le  \varphi((a_1 \cdots a_{m-1}) (a_1 \cdots a_{m-1})^*) ^{\frac{1}{2}} 
\varphi(a_m^* a_m)^{\frac{1}{2}}
\end{equation*}
similarly shows us that 
$\varphi(a_m^* a_m)=0$ and hence $\varphi(a) =0$.
Therefore we have $a_m =t_0$ as well as $a_1 = t_0^*$,
 so that the element $a$ must be of the form 
 \begin{equation}\label{eq:forma2}
a = t_0^* a_2 \cdots a_{m-1}t_0 \quad \text{ where } \quad a_i \in \T \cup \{ t_j \}_{j \in \Z} \cup \{ t_j^* \}_{j \in \Z}.
\end{equation}
We have several cases:

Case A : $a_2 \in \T.$

We have two subcases.

Case A-1 : $a_3, \cdots, a_{m-1} \in \T.$

Since $t_0 t_0^* =e$, we have 
$a = t_0^* a_2 \cdots a_{m-1}t_0
= t_0^* e  a_2 \cdots a_{m-1} e t_0$.
Now the condition $a_2 \cdots a_{m-1} \in \T$ implies that 
$e  a_2 \cdots a_{m-1} e  = c e$ for some $c \in \mathbb{C}$
because $e$ is a minimal projection in $\T$.
Hence 
$a = c t_0^*  e t_0 = c 1_{\whatA} \in \mathbb{C} 1_\whatA.$

Case A-2: $a_2,\dots, a_{k-1}\in \T$ and $a_k \not\in \T$ for some $k \ge 3$.

Put $b: = a_2\cdots a_{k-1} \in \T$ so that 
$ a = t_0^* b a_k a_{k+1} \cdots a_{m-1} t_0$. 
As $ b a_k \ne 0,$
we have
$a_k=t_0$ or $t_j^*$ for some $j \in \Z$.
In case $a_k = t_0,$ we have $t_0^* b t_0 = t_0^* e b e t_0$ with $b \in \T$.
As $e b e = c e$ for some $c \in \mathbb{C}$, we have 
  $t_0^* b a_k = c 1_{\whatA}$ so that 
$$
a = c a_{k+1} \cdots a_{m-1} t_0\quad \text{ for some } c \in \mathbb{C}, \, k\ge 3.
$$  
By the preceding argument as in the precedure to reduce \eqref{eq:forma2},
we have
$a_{k+1} = t_0^*$ so that $a$ is of the form
 \begin{equation}\label{eq:formaA1}
a = c t_0^* a_{k+2} \cdots a_{m-1}t_0 \quad \text{ for some } c \in \mathbb{C}, \, k\ge 3.
\end{equation}
In case $a_k = t_j^*,$ we have 
\begin{equation}\label{eq:formaA2}
a = t_0^* b t_j^* a_{k+1} \cdots a_{m-1}t_0 \quad \text{ for some } b \in \T, \, k\ge 3.
\end{equation}

Case B: $a_2 \in \{t_j\}_{j \in \Z}.$

Let $a_2 = t_j$ for some $j \in \Z$.
We then have $ a = t_0^* t_j a_3\cdots a_{m-1} t_0$,
and hence 
$j=0$ because $a\ne 0$.
Hence 
$a = 1_{\whatA} a_3 \cdots a_{m-1} t_0 =  a_3 \cdots a_{m-1} t_0$.
As in \eqref{eq:forma2}, we have 
$a_3 = t_0^*$ so that   
 \begin{equation}\label{eq:forma3}
a = t_0^* a_4 \cdots a_{m-1}t_0. 
\end{equation}

Case C: $a_2 \in \{t_j^*\}_{j \in \Z}.$

Let $a_2 = t_j^*$ for some $j \in \Z$.
We then have $ a = t_0^* t_j^* a_3\cdots a_{m-1} t_0$.

We have three subcases:

Case C-1: 
$a_3 \in \T.$

Since $t_j^* a_3 =0$ unless $j =0$, so that $j=0$ and hence 
 \begin{equation}\label{eq:forma4}
a = t_0^* t_0^* a_3 \cdots a_{m-1}t_0.
\end{equation}

Case C-2:  
$a_3 \in \{t_i\}_{i \in \Z}.$

Since $t_j^* t_i \ne 0$ if and only if $i=j$, and 
$t_j^* t_j = 1_{\T}$ for $ j \ne 0$ and $t_0^* t_0 = 1_{\whatA}$, we have
$a_1 a_2 a_3 = t_0^* t_j^* t_j = t_0^*$, so that 
 \begin{equation}\label{eq:forma5}
a = t_0^* a_4 \cdots a_{m-1}t_0.
\end{equation}

Case C-3: 
$a_3 \in \{t_i^*\}_{i \in \Z}.$

We then have
 \begin{equation}\label{eq:forma6}
a = t_0^* t_j^* t_i^* a_4 \cdots a_{m-1}t_0.
\end{equation} 
Consequently 
by using induction on the number $m$ with $a = a_1 a_2 \cdots a_m$
and by 
\eqref{eq:formaA1}, \eqref{eq:formaA2} in Case A-2,
\eqref{eq:forma3} in Case B,
and
\eqref{eq:forma4}, \eqref{eq:forma5}, \eqref{eq:forma6} in Case C,
the element $a$ may be written as in the following two forms
\begin{equation}
a  = t_0^* t_{k_1}^* \cdots t_{k_n}^* b_0 t_{j_l} \cdots t_{j_1} t_0, \label{eq:formaD1}
\end{equation}
or
\begin{equation}
a  = t_0^* b_1 t_{k_1}^* b_2 t_{k_2}^*  \cdots b_n t_{k_n}^*  d_l t_{j_l} d_{l-1} \cdots d_1 t_{j_1} d_0 t_0, 
\label{eq:formaD2}
\end{equation}
for some $b_0, b_1, \dots, b_n, \, d_0, d_1, \dots, d_l \in \T$.
Since 
$t_k^* b_i =0$ for $k\ne 0$,
and
$d_i t_j =0$ for $j\ne 0$,
the condition $t_{k_n}^*  b t_{j_l} \ne 0$ for sone $b \in \T$ 
forces us to 
$t_{k_n}^*  b t_{j_l} = t_0^*  b t_0 = t_0^* e b e t_0 = c e$ for some $ c \in \mathbb{C}.$ 
Hence \eqref{eq:formaD2} goes to 
\begin{equation*}
a  = c t_0^* b_1 t_{k_1}^* b_2 t_{k_2}^*  \cdots b_n  e d_{l-1} \cdots d_1 t_{j_1} d_0 t_0. 
\label{eq:formaD21}
\end{equation*}
One inductively has that the element of the forms  \eqref{eq:formaD2},
 and similarly \eqref{eq:formaD1}, 
is reduced to the following form
\begin{equation}\label{eq:formaD3}
a = c \overbrace{t_0^*\cdots t_0^*}^{p}\overbrace{t_0\cdots t_0}^{q} \quad 
\text{ for some } c \in \mathbb{C} \text{ and } p, q \in \N  
\end{equation}
because $t_0^* e t_0 = 1_{\whatA}$ and $t_0^* b t_0 \in \mathbb{C}1_{\whatA}$ for $b \in \T$.
As 
$| \varphi(t_0^n)| \le \varphi(t_0 t_0^*)^{\frac{1}{2}}\varphi(t_0^{n-1} t_0^{n-1*})^{\frac{1}{2}},
$
we have
$\varphi(t_0^{n}) = 0$ and similarly 
$ \varphi(t_0^{*n}) = 0$ for $n \in \N$
by Lemma \ref{lem:invariantstate2}.
We thus conclude that $p=q$ and $a \in \mathbb{C}1_{\whatA}$ from \eqref{eq:formaD3}.
\end{proof} 

\begin{proof}[Proof of Theorem \ref{thm:maininvariantstate}]
 Since the set of linear combinations of 
 finite words of elements of $\T \cup \{ t_j\}_{j \in \Z} \cup \{ t_j^* \}_{j \in \Z}$ 
 is dense in $\whatA$,
Proposition \ref{prop:nvariantstate2} shows that 
a $\hat{\theta}$-invariant state of $\widehat{\A}$ is unique and it has to be $\varphi_{\widehat{\A}}$.

 We will next show that the state $\varphi_{\whatA}$ on $\whatA$ is pure.
 Let $\F$ be a separable unital $C^*$-algebra containing $\calK(H)$ as an essential ideal
and  $f \in \calK(H)$ a minimal projection  satisfying \eqref{eq:KFExtsA}. 
Take a unit vector $\xi_0 \in H$ such that $ f = \xi_0 \otimes \xi_0^*$.
 Let $H_\F$ be the Hilbert $C^*$-bimodule 
 $H \otimes _{\mathbb{C}}\ell^2(\Z) \otimes _{\mathbb{C}} \F$ over $\F$.
  One may regard $f$ as an element of 
 $\O_{H_\F}= \T_{H_\F}$ as in \eqref{eq:eeA}.  
 Let $L^2(\whatA, \varphi_{\whatA})$ be the Hilbert space of the GNS-representation of $\whatA$ by the state 
 $\varphi_{\whatA}$.
 Under the identification 
 $\whatA = f \O_{H_F} f$ as in  
\eqref{eq:constructionAhat}, one may naturally identify 
 $L^2(\whatA, \varphi_{\whatA})$ with the Hilbert space
 spanned by 
\begin{align*}
fF(H_\F)f 
:= & f\F f\Omega \oplus (\xi_0 \otimes_{\mathbb{C}} \ell^2(\Z) \otimes_{\mathbb{C}} \F f) \\
   & \oplus 
 \bigoplus_{k=2}^\infty 
 (\xi_0 \otimes_{\mathbb{C}}\ell^2(\Z) \otimes_{\mathbb{C}} \F) \otimes
(H \otimes_{\mathbb{C}}\ell^2(\Z) \otimes_{\mathbb{C}} \F)^{\otimes k-2}
\otimes (H \otimes_{\mathbb{C}}\ell^2(\Z) \otimes_{\mathbb{C}} \F f)  
\end{align*}
 through the correspondence 
 $$
 f T_\xi f \in f \T_{H_\F} f = f \O_{H_\F} f \longrightarrow f \xi f \in f F(H_\F) f 
 \quad \text{ for } \xi \in H_\F
 $$
where $\Omega$ is a vacuum vector.
Note that $fF(H_\F)f$ is a right Hilbert $f\F f$-module and $f\F f=\mathbb{C}f$.
 For any $X \in \B(L^2(\whatA, \varphi_{\whatA}))\cap ({\widehat{\A}})^\prime$ and
 $\xi \in (\xi_0 \otimes _{\mathbb{C}}\ell^2(\Z) \otimes _{\mathbb{C}} \F)\otimes (H \otimes _{\mathbb{C}}\ell^2(\Z) \otimes _{\mathbb{C}} \F)^{\otimes k-2}
\otimes (H \otimes _{\mathbb{C}}\ell^2(\Z) \otimes _{\mathbb{C}} \F f)
$ with $k\ge 2$, we have
\begin{equation*}
< X \Omega \mid \xi > 
= 
< X \Omega \mid T_\xi \Omega > 
= 
< X T_\xi^* \Omega \mid \Omega > 
= 0
\end{equation*}  
so that $ X \Omega = \lambda \Omega$ for some $\lambda \in \mathbb{C}.$
Since $X\xi = X T_\xi \Omega = T_\xi X\Omega = \lambda \xi$,
we have
$X = \lambda \in \mathbb{C}$, showing that 
$\B(L^2(\whatA, \varphi_{\whatA})\cap \whatA^\prime = \mathbb{C}.$
 Hence the state $\varphi_{\whatA}$ is pure.

 Since any unital Kirchberg algebra $\A$ with finitely generated K-groups is the reciprocal dual of 
 the reciprocal dual of itself (i.e., $\A \cong \widehat{\whatA}$),
we reach the conclusion of Theorem \ref{thm:maininvariantstate}.
\end{proof}

\begin{remark}
{\bf 1.} As in \cite{AbadieDykema}, an automorphism $\alpha$ of a unital $C^*$-algebra $\A$ 
is uniquely ergodic if and only if for every $a \in \A$
the ergodic sums
$\frac{1}{n}\sum_{i=0}^{n-1}\alpha^i(a)$ converge in norm to a scalar multiple of the unit $1_{\A}$
of $\A$ as $n$ goes to infinity.   
By Theorem \ref{thm:universalityforOHF}, 
one may represent $\OHF$ as the universal $C^*$-algebra 
$C^*_{\text{univ}}(\F, \{ S_n \}_{n \in \Z})$ with relations \eqref{eq:SiB}.
Consider the automorphism $\theta$ on $\OHF$ defined by \eqref{eq:autthetaOHF}.
The restriction of $\theta$ to $f \OHF f$ is our ergodic automorphism $\hat{\theta}$
on $\whatA$.
By using the fact that the family
\begin{equation}\label{eq:generatotsSnFremark}
S_n \F S_n^*, \qquad S_n f, \qquad S_n S_i S_n^*, \qquad n, i\in \N
\end{equation}
 generates $f \OHF f$ as in Lemma \ref{UniversalhatA2},
one may prove that 
the ergodic sums
$\frac{1}{n}\sum_{i=0}^{n-1}\hat{\theta}^i(a)$ 
converge in norm to a scalar multiple of the unit $1_{f\OHF f}$
on the  dense subalgebra of $\OHF$
algebraicaly generated by the elements in \eqref{eq:generatotsSnFremark}
by overlapping discussions  with our proof of Proposition  \ref{prop:nvariantstate2}.
Consequently, it is possible to prove that  
the ergodic sums
$\frac{1}{n}\sum_{i=0}^{n-1}\hat{\theta}^i(a)$ 
converge in norm to a scalar multiple of the unit $1_{f\OHF f}$ of $f \OHF f$
fo all elements of $f \OHF f$, showing that $\hat{\theta}$ is uniquely ergodic on $\whatA$.

{\bf 2.} Recall that there is another characterization of a state $\varphi$ on a $C^*$-algebra $\A$  
to be pure, that is, every positive linear functional majorized by $\varphi$ 
is a positive scalar multiple of $\varphi$ (cf. \cite[Definition 9.21]{Takesaki}).
By using the characterization of a pure state, 
one may show that $\varphi_{\whatA}$ is pure in the following way. 
 Let $\phi$ be a positive linear functional on $\whatA$ such that 
 $\phi \le \varphi_{\whatA}$.  
Since the stae $\varphi_{\whatA}$ satisfies the conclusion of Lemma \ref{lem:invariantstate2},
so does the positive linear functional $\phi$.   
Hence the proof of Proposition \ref{prop:nvariantstate2} works for $\phi$
so that  one may conclude that $\frac{\phi}{\phi(1)}$ coincides with $\varphi_{\whatA}$.
\end{remark}

\section{Examples}\label{sec:Examples}

{\bf 1.} The reciprocal dual $\whatOA$ of a simple Cuntz--Krieger algebra $\OA$:

Let $\A$ be a simple Cuntz--Krieger algebra $\OA$
for an irreducible non-permutation matrix $A =[A(i,j)]_{i,j=1}^N$ 
 with entries in 
$\{0,1 \}$.
For $m \in \N$, let us denote by 
$
B_m(X_A)
$
the set of words 
$(\mu_1,\dots,\mu_m), \, \mu_i =1,\dots, N
$
of length $m$ such that 
$A(\mu_i, \mu_{i+1}) =1, i=1,\dots, m-1.
$
We put
$B_*(X_A) = \cup_{m=0}^\infty B_m(X_A)$, 
where $B_0(X_A)$ denotes the empty word.
The Toeplitz algebra $\TA$ for the matrix $A$ 
introduced in \cite{EFW} (cf. \cite{Evans}) is the universal unital $C^*$-algebra 
generated by $N$-partial isometries $T_1,\dots, T_N$ and a nonzero projection $e$ 
subject to the relations:
\begin{equation}\label{eq:Toeplitz}
\sum_{j=1}^N T_j T_j^* + e = 1, \qquad
T_i^* T_i = \sum_{j=1}^N A(i,j) T_j T_j^* + e, \quad i=1,\dots,N.
\end{equation} 
For $\mu =(\mu_1,\dots,\mu_m)\in B_m(X_A),$ 
we write
$T_{\mu_1}\cdots T_{\mu_m}$ as $T_\mu$.
As the identities  
\begin{equation}\label{eq:Toeplitzidentity}
T_\mu e T_\nu^* \cdot T_\xi e T_\eta^*
=
\begin{cases}
T_\mu e T_\eta^* & \text{ if } \nu = \xi, \\
0 & \text{ otherwise}
\end{cases}
\end{equation}
for $\mu, \nu, \xi, \eta \in B_*(X_A)$ hold,  
$\TA$ is the closed linear span of elements of the form
$$ T_\mu e T_\nu^*, \quad T_\xi T_\eta^*,  \qquad \mu, \nu, \xi, \eta \in B_*(X_A)
$$
and 
the $C^*$-subalgebra of $\TA$ generated by 
$ T_\mu e T_\nu^*, \mu, \nu \in B_*(X_A)$
is isomorphic to the $C^*$-algebra $\calK(H)$
of compact operators on a separable infinite dimensional
Hilbert space $H$, such that there is a natural short exact sequence
\begin{equation*}
 0 \longrightarrow \calK(H) \longrightarrow \T_A \longrightarrow \OA \longrightarrow 0.
\end{equation*}
We will consider the Toeplitz algebra $\TAT$ for the transposed matrix 
$A^t =[A^t(i,j)]_{i,j=1}^N$ of $A$.  
It was proved in \cite{MaJMAA2024} and \cite{MaAnalMath2024} (cf. \cite{MatSogabe}, \cite{MatSogabe2}) that
\begin{equation}
(\K_0(\TAT), [e]_0, \K_1(\TAT)) \cong (\Exts^1(\OA), [\iota_{\OA}(1)]_s, \Exts^0(\OA)).
\end{equation}
Hence the reciprocal dual $\whatOA$ is realized as a universal $C^*$-algebra 
$C^*_{\text{univ}}(\T_{A^t}, \{t_j\}_{j \in \N})$ generated by $\TAT$ and a family 
 $\{t_j\}_{j \in \N}$ of partial isometries 
 satisfying relations \eqref{eq:main}.
 The algebra $\TAT$ itself is a universal $C^*$-algebra generated by 
 partial isometries $T_j, j=1,\dots N$ and a projection $e$ satisfying \eqref{eq:Toeplitz}
 for the transposed matrix $A^t$.
 By unifying the generators $T_j, j=1,\dots, N$ and $\{t_j\}_{j \in \N}$
 with their relations \eqref{eq:Toeplitz} and \eqref{eq:main}, 
 we have the following realization of the reciprocal dual
  $\whatOA$ as a simple Exel--Laca algebra \cite{EL}
 in the following way, which has been already seen in \cite[Theorem 1.3]{MatSogabe3}. 
\begin{proposition}[{\cite[Theorem 1.3]{MatSogabe3}}] \label{prop:7.1}
The  reciprocal dual $\whatOA$ is the simple Exel--Laca algebra 
$\O_{\widehat{A}}$ defined by the infinite matrix 
$\widehat{A}$ 
over $\N$ with entries in $\{0,1\}$ such that
\begin{equation}\label{eq:matrixAinfty}
\widehat{A}(i,j) :=
\begin{cases}
A(j,i) & \text{ if } 1 \le i , j \le N, \\
1 & \text{ if } 1 \le i \le N,\, \, j= N+1, \\ 
0 & \text{ if } 1 \le i \le N,\, \, j\ge  N+2, \\ 
1 & \text{ if }  i =N+1, \\ 
1 & \text{ if }  i \ge N+2,\, \, j \le  N+1, \\ 
0 & \text{ if }  i \ge N+2,\, \, j \ge N+2
\end{cases}
\end{equation}
for $i,j \in \N$.
The matrix $\widehat{A}$ is written as  
\begin{equation*}
\widehat{A}=
\left[
\begin{array}{ccccccc}
&                &1      &0     &0     &\cdots \\
&\text{\Huge A}^t&\vdots&\vdots&\vdots&\vdots \\
&                &1      &0     &0     &\cdots \\
1&\dots          &1      &1     &1     &\cdots \\
1&\dots          &1      &0     &0     &\cdots \\
1&\dots          &1      &0     &0     &\cdots \\
\vdots &\dots    &\vdots &\vdots&\vdots&\vdots \\
\end{array}
\right].
\end{equation*}
\end{proposition}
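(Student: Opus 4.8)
The plan is to show that the combined list of generators of $\whatOA$---the Toeplitz generators $T_1,\dots,T_N$ of $\TAT$ together with the reciprocal partial isometries $t_j$---becomes, after a single re-indexing, a family of Exel--Laca partial isometries whose defining relations are exactly the Exel--Laca relations of the matrix $\widehat{A}$ in \eqref{eq:matrixAinfty}. Recall from the discussion preceding the statement that $\whatOA\cong C^*_{\text{univ}}(\TAT,\{t_j\}_{j\in\N})$, where $\TAT$ is presented by \eqref{eq:Toeplitz} for $A^t$ (with defect projection $e$) and the $t_j$ satisfy \eqref{eq:main}. I would set
$$ s_i:=T_i \ (1\le i\le N), \qquad s_{N+1}:=t_1, \qquad s_{N+j}:=t_j \ (j\ge 2), $$
and write $q_x:=s_x^*s_x$ for the source projections, the goal being to match the two relation systems.

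First I would compute each $q_x$ and each range projection $s_xs_x^*$ and match them row by row against \eqref{eq:matrixAinfty}. The key observation is that every row of $\widehat{A}$ has finite support except row $N+1$: for $1\le i\le N$ the support is $\{j\le N: A(j,i)=1\}\cup\{N+1\}$, for $x\ge N+2$ it is $\{1,\dots,N+1\}$, while row $N+1$ consists entirely of $1$'s. For a finite row the Exel--Laca relations collapse to the Cuntz--Krieger identity $q_x=\sum_{\widehat{A}(x,z)=1}s_zs_z^*$. For $1\le i\le N$ this reads $q_i=\sum_{A(j,i)=1}T_jT_j^*+s_{N+1}s_{N+1}^*$, which is precisely the Toeplitz relation $T_i^*T_i=\sum_j A^t(i,j)T_jT_j^*+e$ once one identifies the defect projection $e=t_1t_1^*=s_{N+1}s_{N+1}^*$; for $x\ge N+2$ it reads $q_x=\sum_{z=1}^{N+1}s_zs_z^*=\sum_{j\le N}T_jT_j^*+e=1_\T$, matching $t_j^*t_j=1_\T$ for $j\ge 2$. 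The single infinite row $N+1$ is the Toeplitz row: it forces $q_{N+1}=1_{\whatA}$ (so $s_{N+1}^*s_{N+1}=t_1^*t_1=1_{\whatA}$ is the global unit), and the absence of a finiteness relation there corresponds exactly to the strict inequality $1_\T+\sum_{j=2}^m t_jt_j^*<1_{\whatA}$ of \eqref{eq:main}. Column $N+1$ being identically $1$ records that $e=s_{N+1}s_{N+1}^*$ is dominated by every source projection, i.e.\ the Toeplitz defect is absorbed as a genuine range projection. I would then check the remaining relations (commutation of the $q_x$, $s_x^*s_y=\delta_{x,y}q_x$, and the finiteness relations indexed by a nonempty set $Y$); via the identities above together with \eqref{eq:Toeplitzidentity} these reduce to the orthogonality of the range projections coming from \eqref{eq:Toeplitz} and \eqref{eq:main}.

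With the relations matched, I would conclude the isomorphism by running the universal property of $\whatA$ backwards. Represent the (nonzero) Exel--Laca algebra $\O_{\widehat{A}}$ faithfully on a Hilbert space $\fH$, let $\pi\colon\TAT\to\O_{\widehat{A}}$ be the $*$-homomorphism determined on generators by $T_i\mapsto s_i$ and $e\mapsto s_{N+1}s_{N+1}^*$ (well defined by universality of $\TAT$, and faithful by Remark \ref{rem:faithfulness2} since $\pi(e)=s_{N+1}s_{N+1}^*\ne 0$), and put $t_j:=s_{N+j}$. The computation above shows $(\pi,\{t_j\})$ satisfies hypotheses (t1),(t2), so Corollary \ref{cor:maincoro} yields $\whatA=f\OHF f\cong C^*(\pi(\TAT),\{t_j\})=\O_{\widehat{A}}$, the last equality because the $s_x$ exhaust the Exel--Laca generators. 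Alternatively, the verified relations produce a surjection $\O_{\widehat{A}}\to\whatOA$ by universality of the Exel--Laca algebra, injective once one checks that $\widehat{A}$ is irreducible and satisfies Exel--Laca's simplicity condition (inherited from $A$ being irreducible and non-permutation). I expect the main obstacle to be bookkeeping rather than conceptual: one must verify the full Exel--Laca relation family---in particular the finiteness relations indexed by pairs of finite sets---and not merely the single-row identities, and must treat the unique infinite row $N+1$ separately so that it yields the Toeplitz strict inequality rather than a Cuntz--Krieger equality; the nonvanishing of the canonical generators of $\O_{\widehat{A}}$, needed for faithfulness of $\pi$, follows since $\widehat{A}$ has no zero row, via the Fock representation of \cite{EL}.
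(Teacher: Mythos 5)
Your proposal is correct and follows essentially the same route as the paper: the paper's proof also unifies the generators via $s_j=T_j$ for $1\le j\le N$ and $s_j=t_{j-N}$ for $j>N$, declares it ``direct to see'' that the combined relations \eqref{eq:Toeplitz} and \eqref{eq:main} are the Exel--Laca relations of $\widehat{A}$, and concludes since $C^*(\{s_j\}_{j\in\N})=C^*_{\text{univ}}(\TAT,\{t_j\}_{j\in\N})=\whatOA$. Your row-by-row verification (including the special treatment of the infinite row $N+1$ and the appeal to Corollary \ref{cor:maincoro} or to simplicity of $\O_{\widehat{A}}$) simply supplies the details the paper leaves implicit, so no further comparison is needed.
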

\begin{proof}
Theorem \ref{thm:mainuniversalA3} (i) tells us that the $C^*$-algebra $\whatOA$ 
is the universal $C^*$-algebra
$C^*_{\text{univ}}(\TAT, \{t_j\}_{j \in \N})$ generated by $\TAT$ 
and partial isometries  
$\{t_j\}_{j \in \N}$ satisfying relations \eqref{eq:main}
which are 
\begin{equation} \label{eq:TATtjrelations}
t_1^* t_1 = 1_{\whatOA}, \quad t_j^* t_j = 1_{\TAT} \text{ for } j \ge 2, \quad t_1 t_1^* = e, \quad
1_{\TAT} + \sum_{j=2}^m t_j t_j^* < 1_{\whatOA} 
\end{equation}
for $m \ge 2$.
Let us define a family of partial isometries $\{ s_j \}_{j \in \N}$ by setting
\begin{align*}
s_j =
\begin{cases}
T_j & \text{ for } 1\le j \le N, \\
t_{j-N} & \text{ for } j>N.
\end{cases}
\end{align*}
It is direct to see that 
the $C^*$-algebra $C^*(\{s_j \}_{j \in \N})$ generated  by the partial isometries $\{ s_j \}_{j\in \N}$
is the Exel--Laca algebra $\O_{\widehat{A}}$ defined by the matrix $\widehat{A}$.
As $C^*(\{s_j \}_{j \in \N})$ coincides with $C^*_{\text{univ}}(\TAT, \{t_j\}_{j \in \N})$,
we have $\whatOA$ is the Exel--Laca algebra  $\O_{\widehat{A}}$ for the matrix $\widehat{A}$.
\end{proof}
\begin{remark}
The matrix $\whatA$ defined in \eqref{eq:matrixAinfty}
was written as $\whatA_\infty$ in \cite[Theorem 1.3]{MatSogabe3}.
\end{remark}
The class of the reciprocal duals $\whatOA$ of simple Cuntz--Krieger algebras
is exactly the class of unital Kirchberg algebras $\A$ with finitely generated $\K$-groups
having torsion free $\K_1$-groups such that  
$ \rank(\K_0(\A)) - \rank(\K_1(\A)) =1$
(\cite[Proposition 4.4]{MatSogabe3}).
Hence Proposition \ref{prop:7.1} shows that the Kirchberg algebras belonging to the class 
are explicitly represented 
as Exel--Laca algebras by the above matrices.

\medskip

{\bf 2.} The reciprocal dual $\widehat{\calP}_\infty$ of $\PI$:

Let $\PI$ be the Kirchberg algebra such that 
$\K_0(\PI) = 0, \, \K_1(\PI) = \Z$. 
It is  a unique Kirchberg algebra 
 realized as the unital Exel--Laca algebra
$\O_{P_\infty}$ by the infinite matrix $P_\infty$
\begin{equation}\label{eq:Pinftymatrix}
P_\infty =
\begin{bmatrix}
1 & 0 & 1 & 1 & 1 & 1 &\cdots &   & \\
0 & 1 & 1 & 1 & 1 & 1 &\cdots &   & \\
1 & 0 & 1 & 0 & 0 & 0 &\cdots &   & \\
0 & 1 & 0 & 1 & 0 & 0 &\cdots &   & \\
0 & 0 & 1 & 0 & 1 & 0 &\cdots &   & \\
  &\ddots &\ddots &\ddots   &\ddots &\ddots&\ddots &   & \\            
  &   &   &   &   & & &   &             
\end{bmatrix}
\quad (\text{cf. }  \cite[\text{Example }4.2]{RaebSzy}).
\end{equation}
We will concretely represent the reciprocal dual $\widehat{\calP}_\infty$
of $\calP_\infty$ by generators and its relations 
as a unital simple Exel--Laca algebra.
We know that $\Extw^1(\PI) =\Z, \Extw^0(\PI) =  0 $ by the UCT (cf. \cite[Theprem 23.1.1]{Blackadar})
so that the cyclic six-term exact sequence \eqref{eq:cyclic6temExtsw} for $\A = \PI$
goes to
\begin{equation}\label{eq:6termExtPI2}  
\begin{CD}
\Exts^0(\PI) @>>> 0 @>>> \Z  \\
@AAA @. @VV{\iota_\PI}V \\
0 @<<< \Z @<<< \Exts^1(\PI) 
\end{CD}.
\end{equation}
Since 
there is an isomorphism between $\Exts^1(\PI)$ and $\Z\oplus \Z$ such that 
$\iota_{\PI}(1) $ is mapped to $0\oplus 1\in \Z\oplus \Z,$ 
 we have
\begin{equation*}
(\Exts^1(\PI), [\iota_{\PI}(1)]_s, \Exts^0(\PI))
\cong
(\Z\oplus \Z,0\oplus 1,  0 ). 
\end{equation*} 
Let
$\T_\infty$ be the  
universal unital $C^*$-algebra generated by 
a countable family $T_i, i \in \N$ 
of isometries and one nonzero projection $e$
such that
\begin{equation} \label{eq:relationTinfty}
\sum_{j=1}^m  T_j T_j^* + e <1 \quad \text{ for } m \in \N.
\end{equation}
Since 
\begin{equation*}
T_\mu e T_\nu^* T_\xi e T_\eta^*
= 
\begin{cases}
T_\mu e T_\eta^* & \text{ if } \nu = \xi, \\
0 & \text{ otherwise,}
\end{cases} 
\end{equation*}
the $C^*$-subalgebra 
$C^*(T_\mu e T_\nu^* \mid \mu, \nu \in \N^*)$
 generated by partial isometries 
$
T_\mu e T_\nu^*, \mu, \nu \in \N^*
$
is isomorphic to the $C^*$-algebra $\calK(H)$ of compact operators.
It forms an essential ideal of $\T_\infty$ such that the sequence 
\begin{equation}\label{eq:Tinfty}
0 \longrightarrow
\calK(H) \longrightarrow
\T_\infty \longrightarrow
\OI \longrightarrow
0
\end{equation} 
is exact. 
That is, the exact sequence \eqref{eq:Tinfty} 
is a trivial extension of $\OI$.
The $\K$-theory standard  six-term exact sequence for \eqref{eq:Tinfty}
together with 
$\K_0(\OI) = \Z, \K_1(\OI) =  0$
yields 
the cyclic six term exact sequence
\begin{equation}\label{eq:standard6termforPI2} 
\begin{CD}
\K_1(\T_\infty) @>>> 0 @>>> \Z \\
   @AAA @. @VVV\\
  0 @<<< \Z @<<< \K_0(\T_\infty).
\end{CD}
\end{equation}
As the downward arrow in the diagram \eqref{eq:standard6termforPI2}
is induced by the map
$ \Z \ni 1 \rightarrow [e]_0 \in \K_0(\TI)$,
there is an isomorphism between $\K_0(\TI)$ and $\Z\oplus \Z$ such that 
$[e]_0 $ is mapped to $0\oplus 1\in \Z\oplus \Z,$
so that 
\begin{equation*}
(\K_0(\TI), [e]_0, \K_1(\TI))
\cong
(\Z\oplus \Z,0\oplus 1, 0), 
\end{equation*} 
showing that 
\begin{equation*}
(\K_0(\TI), [e]_s, \K_1(\TI))
\cong
(\Exts^1(\PI), [\iota_{\PI}(1)]_s, \Exts^0(\PI)). 
\end{equation*} 
By Theorem \ref{thm:mainuniversalA3},
the reciprocal dual $\whatPI$ is realized as a universal $C^*$-algebra 
$C^*_{\text{univ}}(\TI, \{t_j\}_{j \in \N})$ generated by $\TI$ and a family 
 $\{t_j\}_{j \in \N}$ of partial isometries 
 satisfying relations \eqref{eq:main} 
 for $\T = \TI$.
 The trivial extension $\TI$ of $\OI$  itself is a universal $C^*$-algebra generated by 
 isometries $T_j, j=1,\dots N$ and a nonzero projection $e$  satisfying 
  \eqref{eq:relationTinfty}.
 By unifying them,
we have the following relations:
\begin{gather*}
T_i^* T_i = 1_{\TI}, \quad  i \in \N, \quad
\sum_{i=1}^m T_i T_i^* + e <1_{\TI}, \\
t_1^* t_1 = 1_{\whatPI}, \quad t_j^* t_j = 1_{\TI} \text{ for } j \ge 2, \quad t_1 t_1^* = e, \quad
1_{\TI} + \sum_{j=2}^m t_j t_j^* < 1_{\whatPI} 
\end{gather*}
for $m \ge 2$.  
They go to the relations:
\begin{gather*}
\sum_{i=1}^m T_i T_i^* + t_1 t_1^* <1_{\TI}, \quad 1_{\TI} + \sum_{j=2}^n t_j t_j^* < 1_{\whatPI}
\text{ for } m,n \in \N \text{ with } n\ge 2, \\
t_1^* t_1 = 1_{\whatPI}, \quad
T_i^* T_i = t_j^* t_j = 1_{\TI} \text{ for } i, j \in \N \text{ with } j \ne 1.
 \end{gather*}
Therefore we have the following proposition.
 \begin{proposition}\label{prop:pinftyExelLaca}
The reciprocal dual $\widehat{\calP}_{\infty}$
of $\PI$  is the universal Kirchberg algebra generated by two families of partial isometries 
$\{T_i\}_{i \in \N}$ and $\{ t_j \}_{j \in \N}$ with orthogonal ranges 
which is realized as a simple Exel--Laca algebra $\O_{\widehat{P}_\infty}$
defined by the matrix $\widehat{P}_\infty$ such as   
\begin{equation*}
\widehat{P}_\infty =
\begin{bmatrix}
1 & 1 & 1 &  \cdots & 1 & 0 & 0 & 0 & \cdots \\
1 & 1 & 1 &  \cdots & 1 & 0 & 0 & 0 & \cdots \\
1 & 1 & 1 &  \cdots & 1 & 0 & 0 & 0 & \cdots \\
1 & 1 & 1 &  \cdots & 1 & 0 & 0 & 0 & \cdots \\
\vdots &  \vdots & \vdots & \cdots & \vdots & \vdots & \vdots & \vdots &\cdots \\
1 & 1 & 1 &  \cdots & 1 & 1 & 1 & 1 & \cdots \\
1 & 1 & 1 &  \cdots & 1 & 0 & 0 & 0 & \cdots \\
1 & 1 & 1 &  \cdots & 1 & 0 & 0 & 0 & \cdots \\
1 & 1 & 1 &  \cdots & 1 & 0 & 0 & 0 & \cdots \\
1 & 1 & 1 &  \cdots & 1 & 0 & 0 & 0 & \cdots \\
\vdots &  \vdots & \vdots & \cdots & \vdots & \vdots & \vdots & \vdots &\cdots 
\end{bmatrix}.
\end{equation*}
\end{proposition}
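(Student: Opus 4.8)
The plan is to follow the template of the proof of Proposition~\ref{prop:7.1}: merge the two families of generators into a single family of partial isometries and recognize the resulting defining relations as the Exel--Laca relations of the matrix $\widehat{P}_\infty$. By Theorem~\ref{thm:mainuniversalA3}(i) the reciprocal dual $\whatPI$ is the universal $C^*$-algebra $C^*_{\text{univ}}(\TI,\{t_j\}_{j\in\N})$, and $\TI$ is in turn universal for the isometries $\{T_i\}_{i\in\N}$ together with the projection $e=t_1t_1^*$ subject to \eqref{eq:relationTinfty}. First I would fix a bijection identifying $\N$ with the disjoint union of the index sets of $\{T_i\}$ and $\{t_j\}$ and set $\{s_k\}_{k\in\N}=\{T_i\}_{i\in\N}\cup\{t_j\}_{j\in\N}$, arranged so that the distinguished all-$1$ row of $\widehat{P}_\infty$ is the index of $t_1$, the block of all-$1$ columns is filled by the indices of $\{T_i\}\cup\{t_1\}$, and the columns that are nonzero only in the $t_1$-row are filled by the indices of $\{t_j\}_{j\ge 2}$.

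The heart of the argument is to check that $\{s_k\}$ satisfies the Exel--Laca relations of $\widehat{P}_\infty$, working directly from \eqref{eq:relationTinfty} and the combined relations displayed just before the proposition. I would verify that (i) the range projections are mutually orthogonal, so $s_k^*s_l=0$ for $k\ne l$; (ii) the initial projections $s_k^*s_k$ take only the two commuting values $1_\TI$ and $1_{\whatPI}$ (in fact $1_\TI<1_{\whatPI}$); and (iii) the Cuntz--Krieger type relation $s_k^*s_k\,s_ls_l^*=\widehat{P}_\infty(k,l)\,s_ls_l^*$ holds. The last relation is read off from whether the range projection $s_ls_l^*$ lies under $1_\TI$, in which case it is dominated by every domain and $s_l$ contributes an all-$1$ column (this is the case for $T_l$ and for $t_1$, whose ranges $T_lT_l^*$ and $e$ lie under $1_\TI$), or is orthogonal to $1_\TI$, in which case it is dominated only by the domain $1_{\whatPI}$ of $t_1$ and $s_l$ contributes a column that is nonzero only in the $t_1$-row (this is the case for $t_l$ with $l\ge 2$).

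The remaining and most delicate point is the family of Exel--Laca boundary relations
\begin{equation*}
\prod_{x\in X}s_x^*s_x\,\prod_{y\in Y}(1-s_y^*s_y)=\sum_{z}\widehat{P}_\infty(X,Y,z)\,s_zs_z^*,
\end{equation*}
imposed for finite subsets $X,Y\subset\N$ for which $\widehat{P}_\infty(X,Y,\cdot)=\prod_{x\in X}\widehat{P}_\infty(x,\cdot)\prod_{y\in Y}(1-\widehat{P}_\infty(y,\cdot))$ has finite support. A short case analysis on whether $t_1\in Y$ and whether $X\subseteq\{t_1\}$ shows that this function is finitely supported only when its support is empty, and in each such case the left-hand side vanishes as well, either because $1-s_{t_1}^*s_{t_1}=0$ or because a factor $1_\TI(1-1_\TI)=0$ appears. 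Consequently every boundary relation with nonzero right-hand side comes from a row of $\widehat{P}_\infty$ with infinitely many $1$'s, that is, from an infinite emitter, for which Exel--Laca imposes no relation; this is precisely the content of the strict inequalities in \eqref{eq:relationTinfty} and \eqref{eq:main}. I expect this matching of the infinite-emitter rows with the strict inequalities to be the main obstacle, as it is what ensures that no spurious relation is introduced upon passing to $\widehat{P}_\infty$.

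Finally, having realized $\{s_k\}$ as partial isometries that generate $C^*_{\text{univ}}(\TI,\{t_j\}_{j\in\N})=\whatPI$ and satisfy exactly the Exel--Laca relations of $\widehat{P}_\infty$, the universal property of the Exel--Laca algebra yields a surjection $\O_{\widehat{P}_\infty}\to\whatPI$. Since $\widehat{P}_\infty$ is transitive and aperiodic (every index reaches the all-$1$ row $t_1$, which in turn reaches every index), $\O_{\widehat{P}_\infty}$ is simple, so this surjection is an isomorphism and $\whatPI\cong\O_{\widehat{P}_\infty}$, exactly as in Proposition~\ref{prop:7.1}.
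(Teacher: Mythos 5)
Your proposal is correct and follows essentially the same route as the paper, which likewise merges $\{T_i\}$ and $\{t_j\}$ into a single family, reads the unified relations (the strict inequalities from \eqref{eq:relationTinfty} and \eqref{eq:main}) as the Exel--Laca relations of $\widehat{P}_\infty$, and concludes by universality and simplicity as in Proposition \ref{prop:7.1}. Your explicit check that every finitely supported Exel--Laca boundary function has empty support (so that the infinite-emitter rows impose no extra relation) is a detail the paper leaves implicit, but it does not change the argument.
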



{\bf 3.} The reciprocal dual $\widehat{\O}_\infty$ of $\OI$.

The $C^*$-algebra $\PI$ is the  Exel--Laca algebra 
$\O_{P_\infty}$ for the matrix $P_\infty$ defined by \eqref{eq:Pinftymatrix}.
It is the universal $C^*$-algebra
generated by a family $\{S_i\}_{i\in \N}$ of partial isometries
such that 
\begin{equation*}
\begin{cases}
& S_1^* S_1 + S_2 S_2^* = S_2^* S_2 + S_1 S_1^* =1, \\
& S_3^* S_3 = S_1 S_1^* + S_3 S_3^*, \\
& S_4^* S_4 = S_2 S_2^* + S_4 S_4^*, \\
& S_5^* S_5 = S_3 S_3^* + S_5 S_5^*, \\
& \cdots
\end{cases}
\quad
\text{ and }
\quad
\sum_{j=1}^m S_j S_j^* < 1 
\text{ for } m \in \N.
\end{equation*}
Let us consider the following universal $C^*$-algebra 
written $\T_{P_\infty}$ generated by partial isometries
$s_i, i \in \N$  and a nonzero projection $e$:
\begin{equation}\label{eq:TPI}
\begin{cases}
& s_1^* s_1 + s_2 s_2^* +e = s_2^* s_2 + s_1 s_1^* =1, \\
& s_3^* s_3 = s_1 s_1^* + s_3 s_3^*, \\
& s_4^* s_4 = s_2 s_2^* + s_4 s_4^*, \\
& s_5^* s_5 = s_3 s_3^* + s_5 s_5^*, \\
& \cdots
\end{cases}
\quad
\text{ and }
\quad
e + \sum_{j=1}^m s_j s_j^* < 1 
\text{ for  } m \in \N.
\end{equation}
Since the $C^*$-subalgebra 
$C^*(s_\mu s_2 e s_2^* s_\nu^* \mid \mu, \nu \in \{1,2,3,\dots \}^*)$ 
of $\T_{P_\infty}$ forms an essential ideal of $\T_{P_\infty}$ isomorphic to $\calK(H)$, 
we have a short exact sequence
\begin{equation}\label{eq:OPI}
0
\longrightarrow \calK(H)
\longrightarrow \T_{P_\infty}
\longrightarrow \O_{P_\infty}
\longrightarrow 0.
\end{equation}
As
$ s_1^* s_1 + s_2 s_2^* +e = s_2^* s_2 + s_1 s_1^* =1$
and $[s_is_i^*]_0 = [s_i^* s_i]_0$ in $\K_0(\T_{P_\infty})$, 
we have $[e]_0 = 0$ in  $\K_0(\T_{P_\infty}).$
By the $\K$-theory standard cyclic six term exact sequence for \eqref{eq:OPI}
together with $\K_1(\PI) = \Z, \, \K_0(\PI) = \{ 0 \}$, 
we have the cyclic six-term exact sequence
\begin{equation}\label{eq:standard6termforTPI2} 
\begin{CD}
\K_1(\T_{P_\infty}) @>>> \Z @>{\partial}>> \Z \\
   @AAA @. @VV{\iota_*}V\\
0 @<<< 0 @<<< \K_0(\T_{P_\infty}).
\end{CD}
\end{equation}
Since the downward arrow  
$\iota_*: \Z \rightarrow  \K_0(\T_{P_\infty})$
satisfies $\iota_*(1) = [e]_0 =0 $ in $\K_0(\T_{P_\infty})$,
so that  
$\iota_*: \Z \rightarrow  \K_0(\T_{P_\infty})$
is the zero map.
Hence we have
$\K_0(\T_{P_\infty}) = 0,$
and the upper horizontal arrow
$\partial: \Z \rightarrow \Z$ is surjective
and hence isomorphic.
We then have $\K_1(\T_{P_\infty}) = 0,$
and hence 
\begin{equation*}
(\K_0(\T_{P_\infty}), [e]_0, \K_1(\T_{P_\infty}))
\cong
( 0, \, 0,\,   0).
\end{equation*}
As $\Exts^i(\OI) = 0$ for $i=0,1$, 
we have
\begin{equation*}
(\K_0(\T_{P_\infty}), [e]_0, \K_1(\T_{P_\infty}))
\cong
(\Exts^1(\OI), [\iota_{\OI}]_s, \Exts^0(\OI)).
\end{equation*} 
Hence the $C^*$-algebra $\widehat{\O}_\infty$
is the universal $C^*$-algebra $C^*_{\text{univ}}(\T_{P_\infty}, \{t_i\}_{i \in \N}) $
generated by $\T_{P_\infty}$ and partial isometries $\{t_j \}_{j \in \N}$ satisfying 
\eqref{eq:main} for $\T = \T_{P_\infty}$.
By \cite[Section 8]{MatSogabe3}, 
$\widehat{{\O}}_\infty
$
is isomorphic to the Cuntz algebra $\O_2$ (cf. \cite{Cuntz77}),
one may construct an aperiodic ergodic automorphism on $\O_2$ 
as in Section \ref{sec:ergodicaut}.
We then choose the index set of $\{ t_j\}_{j \in \N}$ as the integer group $\Z$
such that $\{ t_j\}_{j \in \Z}$
satisfies \eqref{eq:univZ}. 
By Theorem \ref{thm:mainuniversalA3} (i) and Proposition \ref{prop:ergodicauto}, 
we may describe our ergodic automorphism on $\O_2$ 
as an automorphism on the universal $C^*$-algebra 
$C^*_{\text{univ}}(\{s_i\}_{i \in \N}, f, \{t_j\}_{j \in \Z})$
in the following way.

\begin{proposition}\label{prop:ergodicO2}
\begin{enumerate}
\renewcommand{\theenumi}{(\roman{enumi})}
\renewcommand{\labelenumi}{\textup{\theenumi}}
\item The Cuntz algebra $\O_2$ is realized as the the universal $C^*$-algebra 
$C^*_{\text{univ}}(\{s_i\}_{i \in \N},  f, \{t_j\}_{j \in \Z})$
generated by two families of partial isometries
$s_i,i \in \N, \,t_j, j \in \Z$  and a nonzero projection $f$
subject to the following operator relations:
\begin{equation}\label{eq:TPI}
\begin{cases}
& t_0^* t_0 =1, \quad t_j^* t_j = f \quad \text{ for } j\ne 0,\\
& f + \sum_{0<|j|<m}t_j t_j^* <1 \quad \text{ for } m \in \N, \\
& t_0 t_0^* + \sum_{i=1}^m s_i s_i^* < f 
\text{ for } m \in \N, \\
& s_1^* s_1 + s_2 s_2^* + t_0 t_0^* =  s_2^* s_2 + s_1 s_1^* =f, \\
& s_3^* s_3 = s_1 s_1^* + s_3 s_3^*, \\
& s_4^* s_4 = s_2 s_2^* + s_4 s_4^*, \\
& s_5^* s_5 = s_3 s_3^* + s_5 s_5^*, \\
& \qquad \vdots
\end{cases}
\end{equation}
\item The correspondence 
$\hat{\theta}:C^*_{\text{univ}}(\{s_i\}_{i \in \N},f, \{t_j\}_{j \in \Z})
\rightarrow C^*_{\text{univ}}(\{s_i\}_{i \in \N}, f, \{t_j\}_{j \in \Z})$
defined by 
\begin{equation*}
\begin{cases}
\hat{\theta}(s_i)  & = t_1 s_i t_1^* \quad \text{ for } i \in \N, \\
\hat{\theta}(f)  & = t_1 f t_1^*, \\
\hat{\theta}(t_0)  & = t_1 t_0, \\
\hat{\theta}(t_{-1})  & = t_1^*, \\
\hat{\theta}(t_j)  & = t_{j+1} t_1^* \quad \text{ for } j \in \Z, j \ne 0, \, -1 
\end{cases}
\end{equation*}
yields an aperiodic ergodic automorphism on $\O_2$.
\end{enumerate}
\end{proposition}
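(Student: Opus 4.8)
The plan is to obtain both statements by combining the universal presentations already assembled in this section with the automorphism of Proposition~\ref{prop:ergodicauto}; no new structural input is needed. For part~(i) the two ingredients are the identification $\widehat{\O}_\infty \cong \O_2$ of \cite[Section 8]{MatSogabe3} and the presentation $\widehat{\O}_\infty = C^*_{\text{univ}}(\T_{P_\infty}, \{t_j\}_{j\in\N})$ supplied by Theorem~\ref{thm:mainuniversalA3}(i), in which the $t_j$ obey \eqref{eq:main} for $\T=\T_{P_\infty}$. Reindexing $\N$ by $\Z$ exactly as in Section~\ref{sec:ergodicaut}, so that $t_0$ takes the role of $t_1$, these relations become \eqref{eq:univZ}. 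What remains for (i) is purely to splice in the universal presentation of $\T_{P_\infty}$ itself.

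Concretely, $\T_{P_\infty}$ is by construction the universal $C^*$-algebra on $\{s_i\}_{i\in\N}$ together with the distinguished projection of its defining relations recalled above. I would take as generating set the union $\{s_i\}_{i\in\N}\cup\{f\}\cup\{t_j\}_{j\in\Z}$, writing $f:=1_{\T_{P_\infty}}=1_\T$ and identifying the chosen minimal projection $e\in\calK\subset\T_{P_\infty}$ with $t_0t_0^*$, as forced by the relation $t_0t_0^*=e$ in \eqref{eq:univZ}. Substituting $1_\T=f$ and $e=t_0t_0^*$ into the defining relations of $\T_{P_\infty}$ and adjoining the relations \eqref{eq:univZ} (now reading $t_0^*t_0=1$, $t_j^*t_j=f$ for $j\ne0$, and $f+\sum_{0<|j|<m}t_jt_j^*<1$) reproduces term by term the list \eqref{eq:TPI} of the statement. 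Since presenting a $C^*$-algebra as $C^*_{\text{univ}}(\T,\{t_j\})$ with $\T$ itself universal is the same as presenting it on the combined generators subject to the union of the two relation sets, this yields $\O_2\cong C^*_{\text{univ}}(\{s_i\}_{i\in\N},f,\{t_j\}_{j\in\Z})$.

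For part~(ii) I would simply specialize Proposition~\ref{prop:ergodicauto}. That proposition describes the aperiodic ergodic automorphism $\hat{\theta}$ of $\widehat{\A}=C^*_{\text{univ}}(\T,\{t_n\}_{n\in\Z})$ by $\hat{\theta}(b)=t_1bt_1^*$ for $b\in\T$ together with $\hat{\theta}(t_0)=t_1t_0$, $\hat{\theta}(t_{-1})=t_1^*$ and $\hat{\theta}(t_n)=t_{n+1}t_1^*$ for $n\ne0,-1$. Taking $\T=\T_{P_\infty}$ and letting $b$ range over the generators $s_i$ and $f=1_{\T_{P_\infty}}$ of $\T$, the first formula becomes $\hat{\theta}(s_i)=t_1s_it_1^*$ and $\hat{\theta}(f)=t_1ft_1^*$; together with the unchanged formulas for the $t_j$ this is exactly the list displayed in (ii). Aperiodicity and ergodicity are inherited directly from Proposition~\ref{prop:ergodicauto}, and $\widehat{\O}_\infty\cong\O_2$ by part~(i), so $\hat{\theta}$ is an aperiodic ergodic automorphism of $\O_2$.

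The conceptual content is carried entirely by the earlier results, so the only genuine work, and the point where care is required, is the bookkeeping in part~(i): one must correctly match the distinguished minimal projection of $\calK\subset\T_{P_\infty}$ with $t_0t_0^*$ and check that the range and support inequalities of the two presentations merge into the single system \eqref{eq:TPI} with no relation omitted or duplicated. This is routine but should be verified line by line against the defining relations of $\T_{P_\infty}$.
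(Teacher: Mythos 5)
Your proposal is correct and follows essentially the same route as the paper: the paper's argument for this proposition is precisely the preceding discussion of Section 7 (the K-theory identification of $\T_{P_\infty}$, the presentation $\widehat{\O}_\infty=C^*_{\text{univ}}(\T_{P_\infty},\{t_j\})$ from Theorem \ref{thm:mainuniversalA3}, the isomorphism $\widehat{\O}_\infty\cong\O_2$ from \cite{MatSogabe3}, and the unification of the two relation sets via $1_{\T_{P_\infty}}=f$ and $e=t_0t_0^*$), followed by specializing Proposition \ref{prop:ergodicauto} to the generators $s_i$ and $f$ of $\T_{P_\infty}$ for part (ii). Your bookkeeping of the merged relations matches the paper's list \eqref{eq:TPI} exactly.
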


\begin{remark}
 In \cite{CE}, Carey--Evans constructed and studied some ergodic automorphisms on the Cuntz algebras $\O_n$.
 As in \cite[Corollary 3.6]{CE}, their ergodic automorphisms on $\O_2$ have two invariant states,
 whereas our automorphism $\hat{\theta}$ on $\O_2$ has  exactly one invariant state, 
 so that  our automorphism on $\O_2$ is  different from the automorphims in \cite{CE}.
 By \cite{Nakamura}, 
 two aperiodic automorphims are cocycle conjugate 
 if and only if their behaviers on $\K$-theorey groups 
 coincide.
 Since $\K_0(\O_2) = 0$, there exists a unique aperiodic ergodic automorphism on $\O_2$ up to cocycle conjugate.
 Hence our ergodic automorphism on $\O_2$ is cocycle conjugate to the one of Carey--Evans, 
 but not conjugate.      
\end{remark}

\medskip

{\it Acknowledgments:}
The authors would like to thank Masaki Izumi for his useful comments and drawing the authors' attention
to  \cite{CE}.  
K. Matsumoto is supported by JSPS KAKENHI Grant Number 24K06775.
T. Sogabe is supported by JSPS KAKENHI Grant Number 24K16934.

\end{document}